\documentclass[12pt]{amsart}       
\usepackage{txfonts}
\usepackage{amssymb}
\usepackage{eucal}
\usepackage{graphicx}
\usepackage{amsmath}
\usepackage{amscd}
\usepackage[all]{xy}           
\usepackage{amsfonts,latexsym}
\usepackage{xspace}
\usepackage{epsfig}
\usepackage{float}
\usepackage{needspace}
\usepackage{color}
\usepackage{fancybox}
\usepackage{multicol}
\usepackage{colordvi}

\usepackage{ifpdf}
\ifpdf
\usepackage[colorlinks,final,backref=page,hyperindex]{hyperref}
\else
\usepackage[colorlinks,final,backref=page,hyperindex,hypertex]{hyperref}
\fi

\usepackage[active]{srcltx} 
\usepackage{tikz}

\usepackage{mathrsfs}
\newtheorem{theorem}{Theorem}[section]
\newtheorem{prop}[theorem]{Proposition}
\newtheorem{lemma}[theorem]{Lemma}
\newtheorem{coro}[theorem]{Corollary}
\newtheorem{prop-def}{Proposition-Definition}[section]
\newtheorem{coro-def}{Corollary-Definition}[section]

\theoremstyle{definition}
\newtheorem{defn}[theorem]{Definition}
\newtheorem{remark}[theorem]{Remark}

\newtheorem{exam}[theorem]{Example}

\newcommand{\nc}{\newcommand}
\nc{\tred}[1]{\textcolor{red}{#1}}
\nc{\tblue}[1]{\textcolor{blue}{#1}}
\nc{\tgreen}[1]{\textcolor{green}{#1}}
\nc{\tpurple}[1]{\textcolor{purple}{#1}}
\nc{\btred}[1]{\textcolor{red}{\bf #1}}
\nc{\btblue}[1]{\textcolor{blue}{\bf #1}}
\nc{\btgreen}[1]{\textcolor{green}{\bf #1}}
\nc{\btpurple}[1]{\textcolor{purple}{\bf #1}}
\nc{\NN}{{\mathbb N}}
\nc{\ncsha}{{\mbox{\cyr X}^{\mathrm NC}}} \nc{\ncshao}{{\mbox{\cyr
X}^{\mathrm NC}_0}}

\newcommand{\efootnote}[1]{}

\newcommand{\delete}[1]{}

\delete{
\nc{\mlabel}[1]{\label{#1}}  
\nc{\mcite}[1]{\cite{#1}}  
\nc{\mref}[1]{\ref{#1}}  
\nc{\mbibitem}[1]{\bibitem{#1}} 
}

\nc{\mlabel}[1]{\label{#1}{\hfill \hspace{1cm}{\bf{{\ }\hfill(#1)}}}}
\nc{\mcite}[1]{\cite{#1}{{\bf{{\ }(#1)}}}}  
\nc{\mref}[1]{\ref{#1}{{\bf{{\ }(#1)}}}}  
\nc{\mbibitem}[1]{\bibitem[\bf #1]{#1}} 

\nc{\opa}{\ast} \nc{\opb}{\odot} \nc{\op}{\bullet} \nc{\pa}{\frakL}
\nc{\arr}{\rightarrow} \nc{\lu}[1]{(#1)} \nc{\mult}{\mrm{mult}}
\nc{\diff}{\mathfrak{Diff}}
\nc{\opc}{\sharp}\nc{\opd}{\natural}
\nc{\ope}{\circ}
\nc{\dpt}{\mathrm{d}}
\nc{\hck}{H_{RT}}
\nc{\vdf}{\calf}
\nc{\ldf}{\calf_\ell}
\nc{\hlf}{H_\ell}
\nc{\onek}{\mathbf{1}_\bfk}

\nc{\tforall}{\quad \text{ for all }}

\nc{\AW}{\mathcal{A}}

\nc{\rba}{Rota-Baxter algebra\xspace}
\nc{\rbas}{Rota-Baxter algebras\xspace}

\nc{\ari}{\mathrm{ar}}

\nc{\lef}{\mathrm{lef}}

\nc{\Sh}{\mathrm{ST}}

\nc{\Cr}{\mathrm{Cr}}

\nc{\st}{{Schr\"oder tree}\xspace}
\nc{\sts}{{Schr\"oder trees}\xspace}

\nc{\vertset}{\Omega} 

\nc{\assop}{\quad \begin{picture}(5,5)(0,0)
\line(-1,1){10}
\put(-2.2,-2.2){$\bullet$}
\line(0,-1){10}\line(1,1){10}
\end{picture} \quad \smallskip}

\nc{\operator}{\begin{picture}(5,5)(0,0)
\line(0,-1){6}
\put(-2.6,-1.8){$\bullet$}
\line(0,1){9}
\end{picture}}

\nc{\idx}{\begin{picture}(6,6)(-3,-3)
\put(0,0){\line(0,1){6}}
\put(0,0){\line(0,-1){6}}
\end{picture}}

\nc{\pb}{{\mathrm{pb}}}
\nc{\Lf}{{\mathrm{Lf}}}

\nc{\lft}{{left tree}\xspace}
\nc{\lfts}{{left trees}\xspace}

\nc{\fat}{{fundamental averaging tree}\xspace}

\nc{\fats}{{fundamental averaging trees}\xspace}
\nc{\avt}{\mathrm{Avt}}

\nc{\rass}{{\mathit{RAss}}}

\nc{\aass}{{\mathit{AAss}}}

\nc{\vin}{{\mathrm Vin}}    
\nc{\lin}{{\mathrm Lin}}    
\nc{\inv}{\mathrm{I}n}
\nc{\gensp}{V} 
\nc{\genbas}{\mathcal{V}} 
\nc{\bvp}{V_P}     
\nc{\gop}{{\,\omega\,}}     

\nc{\bin}[2]{ (_{\stackrel{\scs{#1}}{\scs{#2}}})}  
\nc{\binc}[2]{ \left (\!\! \begin{array}{c} \scs{#1}\\
    \scs{#2} \end{array}\!\! \right )}  
\nc{\bincc}[2]{  \left ( {\scs{#1} \atop
    \vspace{-1cm}\scs{#2}} \right )}  
\nc{\bs}{\bar{S}} \nc{\cosum}{\sqsubset} \nc{\la}{\longrightarrow}
\nc{\rar}{\rightarrow} \nc{\dar}{\downarrow} \nc{\dprod}{**}
\nc{\dap}[1]{\downarrow \rlap{$\scriptstyle{#1}$}}
\nc{\md}{\mathrm{dth}} \nc{\uap}[1]{\uparrow
\rlap{$\scriptstyle{#1}$}} \nc{\defeq}{\stackrel{\rm def}{=}}
\nc{\disp}[1]{\displaystyle{#1}} \nc{\dotcup}{\
\displaystyle{\bigcup^\bullet}\ } \nc{\gzeta}{\bar{\zeta}}
\nc{\hcm}{\ \hat{,}\ } \nc{\hts}{\hat{\otimes}}
\nc{\barot}{{\otimes}} \nc{\free}[1]{\bar{#1}}
\nc{\uni}[1]{\tilde{#1}} \nc{\hcirc}{\hat{\circ}} \nc{\lleft}{[}
\nc{\lright}{]} \nc{\lc}{\lfloor} \nc{\rc}{\rfloor}
\nc{\curlyl}{\left \{ \begin{array}{c} {} \\ {} \end{array}
    \right .  \!\!\!\!\!\!\!}
\nc{\curlyr}{ \!\!\!\!\!\!\!
    \left . \begin{array}{c} {} \\ {} \end{array}
    \right \} }
\nc{\longmid}{\left | \begin{array}{c} {} \\ {} \end{array}
    \right . \!\!\!\!\!\!\!}
\nc{\onetree}{\bullet} \nc{\ora}[1]{\stackrel{#1}{\rar}}
\nc{\ola}[1]{\stackrel{#1}{\la}}
\nc{\ot}{\otimes} \nc{\mot}{{{\boxtimes\,}}}
\nc{\otm}{\overline{\boxtimes}} \nc{\sprod}{\bullet}
\nc{\scs}[1]{\scriptstyle{#1}} \nc{\mrm}[1]{{\rm #1}}
\nc{\margin}[1]{\marginpar{\rm #1}}   
\nc{\dirlim}{\displaystyle{\lim_{\longrightarrow}}\,}
\nc{\invlim}{\displaystyle{\lim_{\longleftarrow}}\,}
\nc{\mvp}{\vspace{0.3cm}} \nc{\tk}{^{(k)}} \nc{\tp}{^\prime}
\nc{\ttp}{^{\prime\prime}} \nc{\svp}{\vspace{2cm}}
\nc{\vp}{\vspace{8cm}} \nc{\proofbegin}{\noindent{\bf Proof: }}
\nc{\proofend}{$\blacksquare$ \vspace{0.3cm}}
\nc{\modg}[1]{\!<\!\!{#1}\!\!>}
\nc{\intg}[1]{F_C(#1)} \nc{\lmodg}{\!
<\!\!} \nc{\rmodg}{\!\!>\!}
\nc{\cpi}{\widehat{\Pi}}
\nc{\sha}{{\mbox{\cyr X}}}  

\newfont{\scyr}{wncyr10 scaled 550}
\nc{\ssha}{\mbox{\bf \scyr X}}
\nc{\shap}{{\mbox{\cyrs X}}} 
\nc{\shpr}{\diamond}    
\nc{\shp}{\ast} \nc{\shplus}{\shpr^+}
\nc{\shprc}{\shpr_c}    
\nc{\msh}{\ast} \nc{\zprod}{m_0} \nc{\oprod}{m_1}
\nc{\vep}{\epsilon} \nc{\labs}{\mid\!} \nc{\rabs}{\!\mid}
\nc{\sqmon}[1]{\langle #1\rangle}

\nc{\mmbox}[1]{\mbox{\ #1\ }} \nc{\dep}{\mrm{dep}} \nc{\fp}{\mrm{FP}}
\nc{\rchar}{\mrm{char}} \nc{\End}{\mrm{End}} \nc{\Fil}{\mrm{Fil}}
\nc{\Mor}{Mor\xspace} \nc{\gmzvs}{gMZV\xspace}
\nc{\gmzv}{gMZV\xspace} \nc{\mzv}{MZV\xspace}
\nc{\mzvs}{MZVs\xspace} \nc{\Hom}{\mrm{Hom}} \nc{\id}{\mrm{id}}
\nc{\im}{\mrm{im}} \nc{\incl}{\mrm{incl}} \nc{\map}{\mrm{Map}}
\nc{\mchar}{\rm char} \nc{\nz}{\rm NZ} \nc{\supp}{\mathrm Supp}

\nc{\Alg}{\mathbf{Alg}} \nc{\Bax}{\mathbf{Bax}} \nc{\bff}{\mathbf f}
\nc{\bfk}{{\bf k}} \nc{\bfone}{{\bf 1}} \nc{\bfx}{\mathbf x}
\nc{\bfy}{\mathbf y}
\nc{\base}[1]{\bfone^{\otimes ({#1}+1)}} 
\nc{\Cat}{\mathbf{Cat}}

\nc{\detail}{\marginpar{\bf More detail}
    \noindent{\bf Need more detail!}
    \svp}
\nc{\Int}{\mathbf{Int}} \nc{\Mon}{\mathbf{Mon}}
\nc{\rbtm}{{shuffle }} \nc{\rbto}{{Rota-Baxter }}
\nc{\remarks}{\noindent{\bf Remarks: }} \nc{\Rings}{\mathbf{Rings}}
\nc{\Sets}{\mathbf{Sets}} \nc{\wtot}{\widetilde{\odot}}
\nc{\wast}{\widetilde{\ast}} \nc{\bodot}{\bar{\odot}}
\nc{\bast}{\bar{\ast}} \nc{\hodot}[1]{\odot^{#1}}
\nc{\hast}[1]{\ast^{#1}} \nc{\mal}{\mathcal{O}}
\nc{\tet}{\tilde{\ast}} \nc{\teot}{\tilde{\odot}}
\nc{\oex}{\overline{x}} \nc{\oey}{\overline{y}}
\nc{\oez}{\overline{z}} \nc{\oef}{\overline{f}}
\nc{\oea}{\overline{a}} \nc{\oeb}{\overline{b}}
\nc{\weast}[1]{\widetilde{\ast}^{#1}}
\nc{\weodot}[1]{\widetilde{\odot}^{#1}} \nc{\hstar}[1]{\star^{#1}}
\nc{\lae}{\langle} \nc{\rae}{\rangle}
\nc{\lf}{\lfloor}
\nc{\rf}{\rfloor}

\nc{\QQ}{{\mathbb Q}}
\nc{\RR}{{\mathbb R}} \nc{\ZZ}{{\mathbb Z}}

\nc{\cala}{{\mathcal A}} \nc{\calb}{{\mathcal B}}
\nc{\calc}{{\mathcal C}}
\nc{\cald}{{\mathcal D}} \nc{\cale}{{\mathcal E}}
\nc{\calf}{{\mathcal F}} \nc{\calg}{{\mathcal G}}
\nc{\calh}{{\mathcal H}} \nc{\cali}{{\mathcal I}}
\nc{\call}{{\mathcal L}} \nc{\calm}{{\mathcal M}}
\nc{\caln}{{\mathcal N}} \nc{\calo}{{\mathcal O}}
\nc{\calp}{{\mathcal P}} \nc{\calr}{{\mathcal R}}
\nc{\cals}{{\mathcal S}} \nc{\calt}{{\mathcal T}}
\nc{\calu}{{\mathcal U}} \nc{\calw}{{\mathcal W}} \nc{\calk}{{\mathcal K}}
\nc{\calx}{{\mathcal X}} \nc{\CA}{\mathcal{A}}

\nc{\fraka}{{\mathfrak a}} \nc{\frakA}{{\mathfrak A}}
\nc{\frakb}{{\mathfrak b}} \nc{\frakB}{{\mathfrak B}}
\nc{\frakc}{{\mathfrak c}}
\nc{\frakD}{{\mathfrak D}} \nc{\frakF}{\mathfrak{F}}
\nc{\frakf}{{\mathfrak f}} \nc{\frakg}{{\mathfrak g}}
\nc{\frakH}{{\mathfrak H}} \nc{\frakL}{{\mathfrak L}}
\nc{\frakM}{{\mathfrak M}} \nc{\bfrakM}{\overline{\frakM}}
\nc{\frakm}{{\mathfrak m}} \nc{\frakP}{{\mathfrak P}}
\nc{\frakN}{{\mathfrak N}} \nc{\frakp}{{\mathfrak p}}
\nc{\frakS}{{\mathfrak S}} \nc{\frakT}{\mathfrak{T}}
\nc{\frakX}{{\mathfrak X}}
\nc{\BS}{\mathbb{S
}}

\font\cyr=wncyr10 \font\cyrs=wncyr7

\nc{\ID}{{\rm I}}\nc{\lbar}[1]{\overline{#1}}\nc{\bre}{{\rm bre}}
\nc{\sd}{\cals}\nc{\rb}{\rm RB}\nc{\A}{\rm A}\nc{\LL}{\rm L}\nc{\tx}{\tilde{X}}
\nc{\col}{\Delta_{RT}}\nc{\mul}{m_{RT}}\nc{\ul}{u_{RT}}\nc{\epl}{\varepsilon_{RT}}
\nc{\hl}{H_{RT}}\nc{\arro}[1]{#1}\nc{\px}{P_{\tx}}\nc{\pw}{P_{\mathfrak{w}}}\nc{\pl}{B_\omega^+}
\nc{\pp}{\pl}\nc{\ppp}[1]{B^+(#1)}\nc{\dw}{\diamond_{\mathfrak{w}}}\nc{\dl}{\diamond_{\rm \ell}}
\nc{\ncshaw}{\sha^{{\rm NC}}_{\Omega}}\nc{\ncshal}{\sha^{{\rm NC}}_{{\rm RT}}}
\nc{\ver}{\rm V}\nc{\ld}{l}\nc{\del}{\Delta_{{\rm \ell}}}\nc{\epsl}{\epsilon_{{\rm \ell}}}
\nc{\uul}{u_{{\rm \ell}}}\nc{\oneh}{\mathbf{1}}\nc{\onew}{\mathbf{1}}
\nc{\etree}{1} \nc{\conc}{m_{RT}}
\nc{\hrtb}{H_{RT}(X\sqcup\Omega)} \nc{\hrts}{H_{RT}(X, \Omega)}\nc{\rts}{\mathcal{T}(X, \Omega)}\nc{\rfs}{\mathcal{F}(X, \Omega)} \nc{\ncshall}{\sha^{{\rm NC}}_{{\rm RT}}} \nc{\ldl}{\leq_{\mathrm{dl}}} \nc{\pla}{B_{\alpha}^{+}} \nc{\plb}{B_{\beta}^{+}}

\nc{\bim}[1]{#1}  \nc{\shaop}{\sha_{\Omega}^{+}}  \nc{\shao}{\sha_{\Omega}}
\nc{\bbim}[2]{#1 #2} \nc{\bbbim}[2]{#1,\, #2} \nc{\RBF}{{\rm RBF}}
\nc{\frbf}{F_{\RBF}} \nc{\shaf}{\ssha_{\tiny{\Omega}}} \nc{\sham}{\diamond_{\Omega}}

\nc{\dnx}{\Delta_n A} \nc{\dx}{\Delta A} \nc{\dgp}{{\rm deg_{P}}}
\nc{\dgt}{{\rm deg_{T}}} \nc{\dg}{{\rm deg}} \nc{\ida}{ID($A$)} \nc{\tu}{\tilde{u}} \nc{\tv}{\tilde{v}}
 \nc{\fbase}{\calb} \nc{\LF}{\mathrm{RF}} \nc{\FFA}{\mathrm{LF}} \nc{\irr}{\mathrm{Irr}}
 \nc{\result}{\bfk\mathrm{Irr}(S_n)}  \nc{\I}{I_{\mathrm{ID},n}^0}
 \nc{\nrs}{\calr_n^\star} \nc{\ii}{\mathrm{I}} \nc{\iii}{\mathrm{II}}
\nc{\intl}{{\rm int}}\nc{\ws}[1]{{#1}}\nc{\deleted}[1]{\delete{#1}}\nc{\plas}{placements\xspace}

\nc{\Id}{\mathrm{Id}} \nc{\Irr}{\mathrm{Irr}}

\nc{\tos}{totally ordered set } \nc{\nes}{nonempty set}
\nc{\baa}{\succ_\alpha}
\nc{\laa}{\prec_\alpha}
\nc{\bba}{\succ_\beta}
\nc{\lba}{\prec_\beta}
\nc{\bdt} {\succ_\delta}
\nc{\lia} {\prec_\iota}
\nc{\bka}{\succ_\kappa}
\nc{\lmu}{\prec_\mu}
\nc{\Po}{(P_\omega)_{\omega\in \Omega}}

\nc{\apr}{\cdot} \nc{\spr}{\cdot}

\usetikzlibrary{calc}
\newlength\xch\newlength\dbj
\newif\ifqdd
\usepackage{mathtools,enumitem}
\allowdisplaybreaks[1]

\renewcommand*{\backref}[1]{}

\renewcommand*{\backrefalt}[4]{%
  \ifcase #1\relax
  \else #2%
  \fi}

\begin{document}

\title[Rota-Baxter operators on Lie affgebras]{Rota-Baxter Operators and Factorization of Lie Affgebras}

\author{JiaJia Han}
\address{School of Mathematics and Statistics, Nanjing University of Information Science and Technology, Nanjing, Jiangsu 210044, P. R. China}
\email{jjhan@nuist.edu.cn}

\author{Abror Khudoyberdiyev}
\address{V.I.Romanovskiy Institute of Mathematics Uzbekistan Academy of Sciences, Tashkent, Uzbekistan. National University of Uzbekistan, Tashkent, Uzbekistan}
\email{khabror@mail.ru}

\author{Yunzhou Xie}\address{Department of Mathematics, Imperial College London, SW7 2AZ, London, UK}
\email{yx3021@ic.ac.uk}

\author{Yi Zhang}\address{School of Mathematics and Statistics; Center for Applied Mathematics of Jiangsu Province/Jiangsu
International Joint Laboratory on System Modeling and Data Analysis, Nanjing University of Information Science and Technology, Nanjing, Jiangsu, 210044, P.\,R. China}
\email{zhangy2016@nuist.edu.cn}

\date{\today}

\begin{abstract}
We introduce Rota-Baxter operators on Lie affgebras by an identity independent of the choice of origin and prove that they induce descendant Lie affgebra structures. We establish a bijection between ordered affine Lie factorizations and idempotent operators split at a chosen idempotent fixed point. A cokernel obstruction determines the existence of this point, while splitting at such a point is equivalent to commutation of the linear part with the two structural endomorphisms. We also determine when weighted descendants satisfy the affine Jacobi identity and when operators of nonzero weight give affine doubles. For canonical Lie affgebras, we describe all affine Rota-Baxter operators by Lie subalgebra decompositions and translation vectors, and classify them up to automorphism in dimensions at most two.
\end{abstract}

\subjclass[2020]{17B05, 17B38, 17B40}

\keywords{Lie affgebra, Rota-Baxter operator, descendant bracket, fixed point, affine factorization, affine double, classification in low dimensions}

\maketitle

\setcounter{tocdepth}{2}

\vspace{-1cm}

\tableofcontents

\vspace{-1cm}

\section{Introduction}\label{s1}

The purpose of this paper is to introduce Rota-Baxter operators on Lie affgebras and to determine how they produce descendant brackets and ordered affine Lie factorizations. The operator identity is formulated independently of an origin. For idempotent operators, a factorization requires an idempotent fixed point whose fibre is closed under the original bracket.

\subsection{Rota-Baxter operators and Lie algebra decompositions}\label{u1}

Throughout, $\mathbf{k}$ is a field of characteristic zero. Rota-Baxter operators on Lie algebras occur in the operator approach to classical Yang-Baxter equations and Lie algebra factorization~\cite{G2012,STS1983}. For a Lie algebra $(\mathfrak g,[-,-])$, a linear map $R:\mathfrak g\to\mathfrak g$ is a Rota-Baxter operator of weight $-1$ if
$$
[R(x),R(y)]=R\big([R(x),y]+[x,R(y)]-[x,y]\big),
\quad x,y\in\mathfrak g.
$$
The expression inside $R$, denoted by $[x,y]_{R,-1}$, defines the descendant Lie bracket, and $R$ is a homomorphism from the descendant Lie algebra to the original one~\cite{BGN2010}.

At weight zero, Rota-Baxter operators are $\mathcal O$-operators for the adjoint representation, while an arbitrary representation may be used in the definition of an $\mathcal O$-operator~\cite{K1999}. Their relations with tensor solutions of Yang-Baxter equations and post-Lie structures are established in~\cite{BGN2010,BGN2011}, and their deformation theory is developed in~\cite{TBGS2019}. Factorization and differentiation for Rota-Baxter Lie groups are treated in~\cite{GLS2021}, and post-groups connect these constructions with post-Lie algebras and solutions of the Yang-Baxter equation on sets~\cite{BGST2024}. Doubles and matched pairs relate quadratic Rota-Baxter Lie algebras to factorizable Lie bialgebras~\cite{LS2023}, while global integration requires the conditions discussed in~\cite{JSZ2025}.

For the factorization problem considered here, the relevant linear result concerns projections. If $R^2=R$, the Rota-Baxter identity of weight $-1$ is equivalent to closure of both $\operatorname{Im}R$ and $\ker R$ under the Lie bracket. Conversely, an ordered vector space decomposition $\mathfrak g=\mathfrak g_+\oplus\mathfrak g_-$ into Lie subalgebras determines such an operator by projection onto $\mathfrak g_+$~\cite{G2012}. Thus idempotent Rota-Baxter operators give an algebraic description of ordered Lie subalgebra decompositions.

\subsection{The affine factorization problem}\label{u2}

The passage from vector spaces to affine spaces is motivated in part by mechanics formulated without a preferred frame. Lie structures on affine bundles and their relations with affine Poisson and Jacobi structures were studied in~\cite{GGU2003,GGU2004,GGU2005,GGU2007,T1985}. For brackets taking values in the affine space itself, Brzezi\'nski and Papworth gave an algebraic formulation of Lie affgebras using heaps~\cite{BP2023a}. We use the homogeneous form of the affine Jacobi identity introduced in~\cite{ABR2025}. A Lie affgebra is an affine space with a biaffine bracket satisfying the corresponding affine antisymmetry and Jacobi identities. These identities are expressed through the abelian heap operation $\langle x,y,z\rangle$, represented by $x-y+z$ in any choice of affine coordinates.

The relation between heaps, modules and affine spaces is developed in~\cite{BBRS2024,BBRS2025}. Related algebraic constructions include trusses~\cite{B2019,B2020}, affine Leibniz and Poisson algebras~\cite{BRP2026a,BRP2026b}, affine Nijenhuis operators and Rota-Baxter operators on trusses~\cite{BP2023b,CEMM2025}. For Lie affgebras, Proposition~\ref{tstr}, recalled from~\cite{ABR2025}, describes the bracket at a chosen origin by a tangent Lie algebra, two linear endomorphisms satisfying a generalized derivation identity, and a constant term. This connects their structure and classification with the theory of generalized derivations~\cite{LL2000,BK2025}.

Extending the projection correspondence to this setting requires more than imposing a Rota-Baxter identity on the tangent Lie algebra. At an origin $o$, an affine map has the form $P(x)=R(x)+p$, where $R$ is its linear part and $p=P(o)-o$ is its translation vector. The ordinary identity for $R$ does not constrain $p$ or the linear and constant terms of the affine bracket. The first question is therefore which identity for $P$ includes these terms and remains independent of the origin.

At weight $-1$, the three coefficients in the descendant sum to one, so the heap operation gives
$$
\{x,y\}_P
=\big\langle\{P(x),y\},\{x,y\},\{x,P(y)\}\big\rangle.
$$
We ask whether $P(\{x,y\}_P)=\{P(x),P(y)\}$ already implies the affine Jacobi identity for this operation. If it does, the next question is when an idempotent $P$ decomposes the original Lie affgebra into its image and a fibre.

An affine decomposition requires a common point. We write $\mathfrak a=\mathfrak a_+\oplus_e\mathfrak a_-$ for an ordered affine Lie factorization when the two factors are Lie subaffgebras with intersection $\{e\}$ and every point has a unique expression $\langle u,e,v\rangle$, with $u\in\mathfrak a_+$ and $v\in\mathfrak a_-$. For the factors $\operatorname{Im}P$ and $P^{-1}(e)$, this point must satisfy $\{e,e\}=P(e)=e$. Such a point need not exist. If it exists, the operator identity ensures that the fibre is closed under the descendant bracket, but closure under the original bracket is a separate condition. The problem is therefore to determine both when an idempotent fixed point exists and when its fibre is a Lie subaffgebra of the original Lie affgebra.

\subsection{Main results}\label{u3}

The main result extends the correspondence between idempotent Rota-Baxter operators and Lie subalgebra decompositions to Lie affgebras. Its first step is the descendant theorem: at weight $-1$, the affine operator identity implies the affine Jacobi identity, without an additional compatibility assumption.

\begin{theorem}[= Theorem~\ref{t1}]
Let $(\mathfrak a,\{-,-\})$ be a Lie affgebra and let $P:\mathfrak a\to\mathfrak a$ be an affine map such that $P(\{x,y\}_P)=\{P(x),P(y)\}$ for all $x,y\in\mathfrak a$. Then $\{-,-\}_P$ is a Lie affgebra bracket, and $P$ is a homomorphism from the descendant Lie affgebra to the original one. Its linear part is a Rota-Baxter operator of weight $-1$ on every tangent Lie algebra.
\end{theorem}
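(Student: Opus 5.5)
Fix an origin $o\in\mathfrak a$ and work in the coordinates given by Proposition~\ref{tstr}. At $o$ the bracket takes the form
$$\{x,y\}=[x,y]+\lambda(x)-\mu(y)+c,$$
where $[-,-]$ is the tangent Lie bracket, $\lambda,\mu$ are linear endomorphisms satisfying the generalized derivation identity, and $c$ is a constant. The exact signs and names are whatever Proposition~\ref{tstr} fixes. Write $P(x)=R(x)+p$. The heap operation gives
$$\{x,y\}_P=\{P(x),y\}-\{x,y\}+\{x,P(y)\}.$$
Expanding this, I expect the descendant to have the same shape as in Proposition~\ref{tstr}:
$$\{x,y\}_P=[x,y]_{R,-1}+\lambda_P(x)-\mu_P(y)+c_P,$$
where:
\begin{itemize}
\item[(i)] $[x,y]_{R,-1}=[R(x),y]+[x,R(y)]-[x,y]$ is the usual weight $-1$ descendant;
\item[(ii)] $\lambda_P,\mu_P$ are explicit combinations of $\lambda,\mu,R$ and $\operatorname{ad}_p$, for instance $\lambda_P(x)=\lambda R(x)-[p,\cdot]$-type terms;
\item[(iii)] $c_P$ is an explicit vector built from $c$, $\lambda(p)$, $\mu(p)$ and $[p,p]=0$.
\end{itemize}
Biaffineness of $\{-,-\}_P$ is immediate, since it is a heap combination of biaffine maps.

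Next, compare the hypothesis $P(\{x,y\}_P)=\{P(x),P(y)\}$ degree by degree in $(x,y)$, since both sides are polynomial (biaffine) in the variables.
\begin{itemize}
\item The bilinear part gives $R([x,y]_{R,-1})=[R(x),R(y)]$. This is the Rota-Baxter identity of weight $-1$ for $R$ on the tangent Lie algebra at $o$.
\item The parts linear in $x$ and linear in $y$ give two relations between $R$, $\lambda$, $\mu$ and $p$, roughly $R\lambda_P=\lambda R+\operatorname{ad}$-terms, and similarly for $\mu_P$.
\item The constant part gives a relation for $R(c_P)+p$.
\end{itemize}
Since the tangent Lie algebras at different origins are isomorphic via translation, and the operator identity is origin-independent, the statement holds for every tangent Lie algebra. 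Alternatively one reruns the argument with any origin.

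For antisymmetry and the affine Jacobi identity, I would verify the criterion of Proposition~\ref{tstr} for the triple $([-,-]_{R,-1},\lambda_P,\mu_P,c_P)$. This is presumably the characterization that such data define a Lie affgebra iff $[-,-]$ is Lie, the generalized derivation identity holds, and the appropriate constant and antisymmetry conditions hold. That $[-,-]_{R,-1}$ is a Lie bracket is the classical fact from \cite{BGN2010}, which uses only the Rota-Baxter identity just derived. The antisymmetry-type conditions linking $\lambda_P$, $\mu_P$ and $c_P$ should follow by direct substitution from the corresponding conditions for $(\lambda,\mu,c)$.

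The main obstacle is the generalized derivation identity for $(\lambda_P,\mu_P)$ relative to $[-,-]_{R,-1}$, together with any constraint on $c_P$ appearing in the homogeneous affine Jacobi identity of \cite{ABR2025}. Here I would not expand blindly. Instead, I would imitate the classical proof that the Rota-Baxter descendant is Lie: apply the degree-one relations from the second step to rewrite $\lambda_P$ applied to $[x,y]_{R,-1}$, and use the original generalized derivation identity on the pairs $(R(x),y)$, $(x,R(y))$ and $(x,y)$.

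If that becomes unwieldy, a cleaner route is available. Write the affine Jacobiator of $\{-,-\}_P$ as a heap combination of the original affine Jacobiators evaluated at arguments among $x$ and $P(x)$. Each of these vanishes, with the operator identity used to replace $\{P(x),P(y)\}$ by $P(\{x,y\}_P)$. This mirrors the standard identity $J_R=J(Rx,Ry,z)+\dots$.

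Finally, the homomorphism claim is exactly the hypothesis, once $\{-,-\}_P$ is known to be a Lie affgebra bracket.
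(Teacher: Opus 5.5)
\textbf{There is a genuine gap at the step that carries the content of the theorem; your fallback route, however, can be completed.}

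Most of your argument is fine: the coordinate set-up, the degree-by-degree comparison (Rota--Baxter identity from the bilinear part, then $R\kappa R=\kappa R$, $R(\lambda R+\operatorname{ad}_p)=(\lambda+\operatorname{ad}_p)R$ and $R(s+\kappa(p))=s-p+\kappa(p)$ in the paper's notation), and the remarks on the homomorphism and on every origin. Proposition~\ref{tstr} imposes no condition on the constant term, and affine antisymmetry is automatic for a bracket of that shape. So the only thing left is the structural identity
\[
(\lambda R+\operatorname{ad}_p)([x,y]_{R,-1})=[(\lambda R+\operatorname{ad}_p)(x),y]_{R,-1}-[x,\kappa R(y)]_{R,-1}+[x,(\lambda R+\operatorname{ad}_p)(y)]_{R,-1},
\]
that is, $\mathcal J_P=0$ in Eq.~(\ref{e26}) at $\theta=-1$. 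You only announce a strategy for it.

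The straightforward combination of the degree-one relations with the original identity (apply $R$ and intertwine) yields only $R\mathcal J_P=0$. That argument works at every weight (Proposition~\ref{p5}), and Example~\ref{x1} shows the defect itself can be nonzero at weight $1$. So a proof must also control the component along $\operatorname{id}-R$, using something special to weight $-1$, and your sketch does not say what. Also, Eq.~(\ref{e1}) is not applied at the pairs $(R(x),y)$, $(x,R(y))$, $(x,y)$. Since $\lambda R([x,y]_{R,-1})=\lambda([R(x),R(y)])$, it enters only at $(R(x),R(y))$.

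The paper's idea is the splitting $[x,y]_{R,-1}=[R(x),R(y)]-[x-R(x),y-R(y)]$, together with the degree-one relations rewritten for the complementary projection: $(\operatorname{id}-R)\kappa R=0$ and $(\operatorname{id}-R)(\lambda R+\operatorname{ad}_p)=\operatorname{ad}_p(\operatorname{id}-R)$. Substituting these into the four terms turns the defect into three pieces:
\begin{itemize}
\item the defect of Eq.~(\ref{e1}) at $(R(x),R(y))$;
\item the derivation defect of $\operatorname{ad}_p$ at $(R(x),R(y))$;
\item the derivation defect of $\operatorname{ad}_p$ at $(x-R(x),y-R(y))$.
\end{itemize}
All three vanish.

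Your fallback route is sound and would give a genuinely different proof, but you only assert it. In the coordinates of Proposition~\ref{tstr}, biaffineness alone gives, for every affine $P$,
\[
\{x,w\}_P-\{x,w'\}_P=[P(x)-x,\,w-w']+(\operatorname{ad}_x+\lambda)\bigl(P(w)-P(w')\bigr).
\]
Take $w=\{y,z\}_P$ and $w'=\{x,x\}_P$. The operator identity is then used exactly once, to replace $P(w)-P(w')$ by $\{P(y),P(z)\}-\{P(x),P(x)\}$. After this substitution the cyclic sum vanishes for every affine $P$:
\begin{itemize}
\item its trilinear part is the classical combination of Jacobiators of $\mathfrak g$ with arguments among $x,y,z,R(x),R(y),R(z)$;
\item its $p$-free quadratic part is $\sum_{\mathrm{cyc}}$ of the defect of Eq.~(\ref{e1}) at $(R(y),R(z))$, minus $\sum_{\mathrm{cyc}}[R(x),\kappa R(x)]$;
\item the terms containing $p$ combine into derivation defects of $\operatorname{ad}_p$;
\item the linear terms cancel cyclically.
\end{itemize}

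Compared with the paper, this route copies the classical proof that a Rota--Baxter descendant is Lie and bypasses the structural identity entirely. It also isolates what fails at other weights: at weight $\theta$ the same expansion carries the extra term $(\theta+1)(\lambda-\tfrac12\kappa)(w-w')$. The paper's route costs more computation, but it yields the explicit standard form $\mathfrak a\bigl((\mathfrak g,[-,-]_{R,-1});\kappa R,\lambda R+\operatorname{ad}_p,s+\kappa(p)\bigr)$ of the descendant, which is reused later. Either way, the calculation above is what you must supply.
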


We call a map satisfying this identity an affine Rota-Baxter operator. The theorem requires neither idempotency nor a fixed point. These conditions enter in the factorization problem. If $P$ is idempotent, its image is a Lie subaffgebra, but a fibre over an idempotent fixed point need not be. We call $P$ split at such a point $e$ when $P^{-1}(e)$ is a Lie subaffgebra.

\begin{theorem}[= Theorem~\ref{t6}]
Ordered affine Lie factorizations of a Lie affgebra $\mathfrak a$ are in bijection with pairs $(P,e)$ consisting of an idempotent affine Rota-Baxter operator and an idempotent fixed point at which it is split. The factors are $\operatorname{Im}P$ and $P^{-1}(e)$, and $P$ is the projection onto the first factor along the second. The complementary map $Q_e(x)=\langle x,P(x),e\rangle$ is an idempotent affine Rota-Baxter operator corresponding to the reversed factorization.
\end{theorem}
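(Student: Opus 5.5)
The plan is to construct the two maps of the bijection explicitly and then check that they are mutually inverse. For the map from factorizations to pairs, start from an ordered affine Lie factorization $\mathfrak a=\mathfrak a_+\oplus_e\mathfrak a_-$. Each $x$ has a unique expression $x=\langle u,e,v\rangle$ with $u\in\mathfrak a_+$, $v\in\mathfrak a_-$, and we put $P(x)=u$. Fixing the origin $e$ turns this into the usual linear statement. The tangent space $V=\vec{\mathfrak a}$ at $e$ splits as $V_+\oplus V_-$, and $P$ becomes the linear projection $R$ onto $V_+$ along $V_-$, with translation vector $P(e)-e=0$. Hence $P$ is affine, idempotent, fixes $e$, satisfies $\operatorname{Im}P=\mathfrak a_+$ and $P^{-1}(e)=\mathfrak a_-$, and is split at $e$. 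The point $e$ lies in $\mathfrak a_+\cap\mathfrak a_-$, and both factors are subaffgebras, so $\{e,e\}\in\mathfrak a_+\cap\mathfrak a_-=\{e\}$. This shows $e$ is an idempotent fixed point.

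The main work is to show that $P$ satisfies $P(\{x,y\}_P)=\{P(x),P(y)\}$. Because the heap operation and the bracket are biaffine, it suffices to check the identity when $x$ and $y$ each lie in one of the factors. Indeed, both sides are biaffine in $(x,y)$: the left side because $P$ is affine and $\{-,-\}_P$ is biaffine, and the right side for the same reason. Every point is a heap combination $\langle u,e,v\rangle$, so the identity propagates from the factors to all of $\mathfrak a$. The cases are then as follows.
\begin{itemize}
\item If $x,y\in\mathfrak a_+$, then $P(x)=x$ and $P(y)=y$. So $\{x,y\}_P=\langle\{x,y\},\{x,y\},\{x,y\}\rangle=\{x,y\}\in\mathfrak a_+$, and both sides equal $\{x,y\}$.
\item If $x,y\in\mathfrak a_-$, then $P(x)=P(y)=e$. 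We get $\{x,y\}_P=\langle\{e,y\},\{x,y\},\{x,e\}\rangle$, which lies in $\mathfrak a_-$ because $\mathfrak a_-$ is a subaffgebra and heap-closed. Hence the left side is $e$, and the right side is $\{e,e\}=e$.
\item The mixed case $x\in\mathfrak a_+$, $y\in\mathfrak a_-$ gives $\{x,y\}_P=\langle\{x,y\},\{x,y\},\{x,e\}\rangle=\{x,e\}\in\mathfrak a_+$. Its image under $P$ is $\{x,e\}=\{P(x),P(y)\}$.
\end{itemize}
The remaining mixed case is symmetric.

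For the map from pairs to factorizations, start from a pair $(P,e)$. By Theorem~\ref{t1}, or directly from the identity, $\operatorname{Im}P$ is a subaffgebra: applying $P$ to the identity and using $P^2=P$ shows $\{P(x),P(y)\}\in\operatorname{Im}P$. The fibre $P^{-1}(e)$ is a subaffgebra by the splitting hypothesis. Idempotency gives $\operatorname{Im}P\cap P^{-1}(e)=\{e\}$. Every $x$ decomposes as $x=\langle P(x),e,\langle x,P(x),e\rangle\rangle$, and the second entry lies in the fibre because $P$ is affine: $P\langle x,P(x),e\rangle=\langle P(x),P(x),e\rangle=e$. Uniqueness of this decomposition follows by applying $P$, again using that $P$ is affine. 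The two constructions are clearly inverse to each other, since $P$ is recovered as the projection and $e$ as the intersection point.

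Finally, $Q_e(x)=\langle x,P(x),e\rangle$ is exactly the projection onto $P^{-1}(e)$ along $\operatorname{Im}P$. The reversed factorization $P^{-1}(e)\oplus_e\operatorname{Im}P$ is again an ordered affine Lie factorization, since the heap operation is abelian and $\langle u,e,v\rangle=\langle v,e,u\rangle$. By the first construction, $Q_e$ is therefore an idempotent affine Rota-Baxter operator split at $e$. The step I expect to be the most delicate is the reduction to the factors: one has to verify carefully that the identity, which is biaffine in $(x,y)$, is determined by its values on the generating factors, or equivalently to expand everything at origin $e$ using the tangent description of Proposition~\ref{tstr}. If that reduction is awkward, I would instead work entirely at origin $e$. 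There $P=R$ is a linear projection, the bracket is $[x,y]+\phi(x)-\psi(y)+c$ with $c=0$ since $\{e,e\}=e$, and I would check the identity componentwise.
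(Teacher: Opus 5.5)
Your proof is correct, and its overall structure matches the paper's. The common steps are: closure of $\operatorname{Im}P$ from the operator identity, closure of the fibre from splitting, the decomposition $x=\langle P(x),e,Q_e(x)\rangle$ with uniqueness obtained by applying $P$, recovery of $(P,e)$ and of the factors, and $Q_e$ obtained by running the first construction on the reversed factorization.

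The genuine difference is in the key step, showing that the projection attached to a factorization satisfies Eq.~(\ref{e9}). The paper takes $e$ as origin and uses Corollary~\ref{c1} to see that $\kappa$ and $\lambda$ preserve both summands and hence commute with the projection $R$. It then verifies the identity through the coordinate formula Eq.~(\ref{e10}). You argue intrinsically instead. Both sides of Eq.~(\ref{e9}) are biaffine, so it suffices to test pairs drawn from $\mathfrak a_+\cup\mathfrak a_-$. There the descendant collapses to $\{x,y\}$, to $\{x,e\}$ or $\{e,y\}$, or stays inside $\mathfrak a_-$.

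The reduction you single out as delicate is routine. For $y$ in a factor, the two sides are affine in $x$ and agree on $\mathfrak a_+\cup\mathfrak a_-$, hence at every $x=\langle u,e,v\rangle$. Then fix an arbitrary $x$ and repeat the argument in $y$.

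Your route needs neither Proposition~\ref{tstr} nor Eq.~(\ref{e10}). It uses only heap identities and closure of the two factors, and it explains why the identity holds. The paper's route costs a calculation, but it produces the commutation relations $R\kappa=\kappa R$ and $R\lambda=\lambda R$ along the way. These connect the factorization directly to the splitting criterion of Theorem~\ref{t5} and to the coordinate equations of Corollary~\ref{c2}, which are used again in Section~\ref{s4}.
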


The additional affine conditions in this correspondence are explicit. Theorem~\ref{t4} gives a cokernel class whose vanishing is equivalent to the existence of an idempotent fixed point and describes the affine space of all such points. After choosing one of them as origin, Theorem~\ref{t5} identifies closure of the fibre with commutation of the projection and the two structural endomorphisms. These two conditions determine when a tangent Lie algebra decomposition extends to an affine Lie factorization.

For a Lie algebra $(\mathfrak g,[-,-])$, the bracket $\{x,y\}=[x,y]+y$ is a special case of~\cite[Example~2.7]{ABR2025}. We call this the \textbf{canonical Lie affgebra} associated with $\mathfrak g$ and denote it by $\mathfrak a_{\mathrm{can}}(\mathfrak g)$. On these Lie affgebras, the affine Rota-Baxter identity itself forces idempotency and gives the following description of all operators.

\begin{theorem}[= Theorem~\ref{t9}]
The affine Rota-Baxter operators on the canonical Lie affgebra associated with $\mathfrak g$ are exactly the maps
$$
P(x_++x_-)=x_++p,
\quad x_+\in\mathfrak g_+,\quad x_-\in\mathfrak g_-,
$$
where $\mathfrak g=\mathfrak g_+\oplus\mathfrak g_-$ is a vector space decomposition into Lie subalgebras and $p\in\mathfrak g_-$ satisfies $[p,\mathfrak g_+]\subseteq\mathfrak g_+$. Every such operator is idempotent and split at $p$.
\end{theorem}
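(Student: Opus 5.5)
The plan is to work at the origin $0$ of $\mathfrak g$. There the canonical bracket is $\{x,y\}=[x,y]+y$, and an affine map has the form $P(x)=R(x)+p$ with $R$ linear and $p=P(0)\in\mathfrak g$. I will expand the affine Rota-Baxter identity $P(\{x,y\}_P)=\{P(x),P(y)\}$ as a polynomial identity in $(x,y)$ and separate it into its constant, linear and bilinear parts.

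Using the heap operation at the origin, $\{x,y\}_P=\{P(x),y\}-\{x,y\}+\{x,P(y)\}$. Substituting the canonical bracket gives
$$
\{x,y\}_P=[R(x),y]+[x,R(y)]-[x,y]+[p,y]-[p,x]+R(y)+p .
$$
Applying $P$ to this, and comparing with $\{P(x),P(y)\}=[R(x),R(y)]+[R(x),p]+[p,R(y)]+R(y)+p$, the identity holds for all $x,y$ if and only if the following four homogeneous parts match:
\begin{itemize}
\item[(i)] the constant part, which gives $R(p)=0$;
\item[(ii)] the part linear in $x$ alone, which gives $R([p,x])=[p,R(x)]$;
\item[(iii)] the part linear in $y$ alone, which gives $R([p,y])+R^2(y)=[p,R(y)]+R(y)$; given (ii), this reduces to $R^2=R$;
\item[(iv)] the bilinear part, which is exactly the Rota-Baxter identity of weight $-1$ for $R$.
\end{itemize}
The separation is legitimate because both sides are sums of constant, linear and bilinear terms, so setting $x$ or $y$ to zero, or both, isolates each part. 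This also recovers the last assertion of Theorem~\ref{t1} in this case.

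Next I translate these conditions. Since $R$ is an idempotent Rota-Baxter operator of weight $-1$, the projection correspondence recalled in \S\ref{u1} gives a decomposition $\mathfrak g=\mathfrak g_+\oplus\mathfrak g_-$ with $\mathfrak g_+=\operatorname{Im}R$ and $\mathfrak g_-=\ker R$ both Lie subalgebras. Then $P(x_++x_-)=x_++p$, and condition (i) says exactly that $p\in\mathfrak g_-$.

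Condition (ii) says that $R$ commutes with $\operatorname{ad}_p$. A linear map commutes with a projection exactly when it preserves both the image and the kernel of the projection. Here $\operatorname{ad}_p$ automatically preserves $\mathfrak g_-$, because $p\in\mathfrak g_-$ and $\mathfrak g_-$ is a subalgebra. So (ii) is equivalent to $[p,\mathfrak g_+]\subseteq\mathfrak g_+$.

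Every step above is an equivalence, so reading the argument backwards shows that every map of the stated form is an affine Rota-Baxter operator. This gives the converse.

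It remains to check idempotency and splitting. For idempotency, $P(P(x))=R(R(x)+p)+p=R(x)+p=P(x)$, using $R^2=R$ and $R(p)=0$. For splitting, $P(p)=R(p)+p=p$ and $\{p,p\}=[p,p]+p=p$, so $p$ is an idempotent fixed point. The fibre is $P^{-1}(p)=\ker R=\mathfrak g_-$. For $x,y\in\mathfrak g_-$ we have $\{x,y\}=[x,y]+y\in\mathfrak g_-$, so the fibre is a Lie subaffgebra and $P$ is split at $p$.

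The main obstacle I expect is the bookkeeping in the expansion. The step that must be done carefully is the $y$-linear part: there the contribution $R^2(y)$ from applying $P$ to the $R(y)$ summand has to be seen to combine with the $\operatorname{ad}_p$ terms. Once the relation from (ii) is used to cancel those terms, what remains in (iii) is exactly idempotency, and this is where the canonical bracket forces $R^2=R$.
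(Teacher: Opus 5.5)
Your proof is correct and follows essentially the paper's route. The paper gets the same four coordinate conditions ($R(p)=0$, $R^2=R$, $R\operatorname{ad}_p=\operatorname{ad}_pR$, and the weight $-1$ Rota-Baxter identity) by specializing Corollary~\ref{c2} to $\kappa=\lambda=\operatorname{id}$, $s=0$. It then uses Proposition~\ref{p2} and the fact that $\operatorname{ad}_p$ commutes with a projection exactly when it preserves the image and the kernel, just as you do.

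The differences are cosmetic. You expand the identity directly, separating by $y=0$ and $x=0$ rather than $y=x$ and $x=0$, so $R^2=R$ comes from combining two linear conditions. For splitting you check only closure of the fibre, which is all Definition~\ref{d5} requires.
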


The theorem describes each operator by an ordered Lie subalgebra decomposition and a vector satisfying the stated conditions. Classification also requires the affine automorphisms acting on these objects. Every automorphism of a canonical Lie affgebra is a Lie algebra automorphism followed by a translation by a central vector (Proposition~\ref{p14}). Both transformations enter the equivalence of operators. For an abelian tangent Lie algebra, central translations remove the translation vector, and the operators are classified by rank in every finite dimension (Theorem~\ref{t10}). For the nonabelian Lie algebra of dimension two, the equivalence classes are represented by five individual operators and two sets indexed by $\alpha\in\mathbf{k}^{\times}$ and $t\in\mathbf{k}$, respectively (Theorem~\ref{t11}). Different rows and different parameter values give inequivalent operators. These results classify affine Rota-Baxter operators on canonical Lie affgebras of dimension at most two.

The proofs use the following properties of the affine bracket and the operator.
\begin{enumerate}[label={\rm(\arabic*)}]
\item Comparing the bilinear, linear and constant parts of the affine identity gives the coordinate equations in Corollary~\ref{c2}. The identity $[x,y]_{R,-1}=[R(x),R(y)]-[x-R(x),y-R(y)]$ then reduces the descendant calculation to the original structural identity and the derivation property of $x\mapsto[p,x]$ (Theorem~\ref{t1}).
\item A change of origin alters both the constant term of the bracket and the translation vector of the operator by values of one linear map. Solving the resulting linear system determines the common points (Theorem~\ref{t4}). At any such point, both structural endomorphisms preserve the image of the projection. Closure of the fibre under the original bracket is equivalent to their preservation of its kernel, or equivalently to their commutation with the projection (Theorem~\ref{t5}). These conditions give the factorization correspondence in Theorem~\ref{t6}.
\item For canonical Lie affgebras, the coordinate equations force the linear part to be a projection and the translation vector to lie in its kernel and normalize its image (Theorem~\ref{t9}). Lie algebra automorphisms act on the ordered image and kernel, while central translations also change the translation vector as specified in Proposition~\ref{p14}. That proposition gives the equivalence relations used in Theorems~\ref{t10} and~\ref{t11}.
\end{enumerate}

We also determine what changes at other weights. The three coefficients in the ordinary weighted descendant do not sum to one when the weight differs from $-1$. Subtracting equal multiples of the two diagonal values gives an operation independent of the origin, with the ordinary weighted descendant as its bilinear part. Under the affine operator identity, this operation satisfies the affine Jacobi identity precisely when the structural condition in Theorem~\ref{t3} holds. Example~\ref{x1} shows that this condition is needed at weight $1$. At nonzero weight, Theorem~\ref{t7} determines when the affine graph is closed under the componentwise bracket. In that case, the graph and diagonal give an affine double whose tangent Lie algebra is the Rota-Baxter double of~\cite{LS2023} (Theorem~\ref{t8}). The bracket transported from the graph and the weighted descendant generally differ. At an idempotent fixed point of an idempotent affine Rota-Baxter operator, graph closure at weight $-1$ is equivalent to splitting (Proposition~\ref{p12}).

\noindent\textbf{Organization of the paper.} Section~\ref{s2} recalls the structure of Lie affgebras and introduces affine Rota-Baxter operators. It proves the descendant theorem, determines when an idempotent fixed point exists and then treats arbitrary weights. Section~\ref{s3} studies the image and affine kernel, proves the splitting condition and establishes the correspondence with ordered affine Lie factorizations. It also constructs affine doubles at nonzero weight and identifies their tangent Lie algebras. Section~\ref{s4} applies the factorization results to Lie affgebras with $\kappa=\lambda=\gamma\operatorname{id}$, for a fixed Lie algebra $\mathfrak g$ and parameters $\gamma\in\mathbf{k}$ and $s\in\mathfrak g$. It then describes all affine Rota-Baxter operators on canonical Lie affgebras and gives their classification up to automorphism in dimensions at most two.

\noindent\textbf{Notation.} All vector and affine spaces are over the field $\mathbf{k}$ fixed above, and $\mathbf{k}^{\times}=\mathbf{k}\setminus\{0\}$. For a vector space $V$, we write $\operatorname{End}(V)=\operatorname{End}_{\mathbf{k}}(V)$ for its space of linear endomorphisms and $\operatorname{id}$ for the identity map. Products of maps denote composition, with $ST=S\circ T$. For linear endomorphisms, $[S,T]=ST-TS$. For a Lie algebra $\mathfrak g$, we write $\operatorname{ad}_a(x)=[a,x]$ for $a,x\in\mathfrak g$.

\section{Lie affgebras and Rota-Baxter operators}\label{s2}

We recall the structure of Lie affgebras and their subaffgebras, then introduce affine Rota-Baxter operators through an identity independent of the choice of origin. The main result is the descendant theorem. We next describe the obstruction to an idempotent fixed point and then determine the structural identity required at other weights.

\subsection{Affine spaces, affine maps and tangent spaces}\label{u5}

We recall the definitions of affine spaces and affine maps used in~\cite{ABR2025,BP2023a}.

A nonempty set $X$ with a ternary operation $\langle-,-,-\rangle:X\times X\times X\to X$ is an \textbf{abelian heap} if, for all $x,y,z,u,v\in X$,
$$
\langle x,y,z\rangle=\langle z,y,x\rangle,\quad
\langle x,x,z\rangle=z,
\quad
\langle\langle x,y,z\rangle,u,v\rangle
=\langle x,y,\langle z,u,v\rangle\rangle.
$$
Let $(X,\langle-,-,-\rangle_X)$ and $(Y,\langle-,-,-\rangle_Y)$ be abelian heaps. A map $f:X\to Y$ is a \textbf{homomorphism of heaps} if
$$
f(\langle x,y,z\rangle_X)=\langle f(x),f(y),f(z)\rangle_Y
$$
for all $x,y,z\in X$~\cite{BBRS2024,B2019}. For five arguments we write
$
\langle a,b,c,d,e\rangle=\langle\langle a,b,c\rangle,d,e\rangle.
$
The heap identity makes this independent of the parenthesization. Choosing $e\in X$ gives an abelian group with addition $x+_e y=\langle x,e,y\rangle$ and zero $e$. Conversely, every abelian group defines an abelian heap by $\langle x,y,z\rangle=x-y+z$.

\begin{defn}\label{d0}
\textup{\cite{ABR2025,OS1966}} An \textbf{affine space} over $\mathbf{k}$ is a nonempty set $\mathfrak a$ equipped with ternary operations
$$
\langle-,-,-\rangle:\mathfrak a^3\to\mathfrak a,
\quad
(-,-,-):\mathbf{k}\times\mathfrak a\times\mathfrak a\to\mathfrak a
$$
satisfying the following conditions for all $\alpha,\beta\in\mathbf{k}$ and $x,y,z\in\mathfrak a$:
\begin{enumerate}[label=\textup{(\alph*)},leftmargin=*]
\item $(\mathfrak a,\langle-,-,-\rangle)$ is an abelian heap.
\item The maps $(\alpha,x,-):\mathfrak a\to\mathfrak a$ and $(-,x,y):\mathbf{k}\to\mathfrak a$ are homomorphisms of heaps, where the heap operation on $\mathbf{k}$ is $\langle\alpha,\beta,\gamma\rangle=\alpha-\beta+\gamma$ for $\alpha,\beta,\gamma\in\mathbf{k}$.
\item For each $x$, the map $(-,x,-)$ is an action of the multiplicative monoid of $\mathbf{k}$, namely
$$
(1,x,y)=y,\quad (\alpha,x,(\beta,x,y))=(\alpha\beta,x,y).
$$
\item $(0,x,y)=x$.
\item $(\alpha,x,y)=\langle(\alpha,z,y),(\alpha,z,x),x\rangle$.
\end{enumerate}
For a fixed $x\in\mathfrak a$, we call $(-,x,-)$ the \textbf{affine action} with base $x$.
\end{defn}

A map $f:\mathfrak a\to\mathfrak b$ between affine spaces is \textbf{affine} if it preserves both operations, that is,
$$
f(\langle x,y,z\rangle)=\langle f(x),f(y),f(z)\rangle,
\quad
f((\alpha,x,y))=(\alpha,f(x),f(y))
$$
for all $x,y,z\in\mathfrak a$ and $\alpha\in\mathbf{k}$~\cite{ABR2025}. A map of two affine variables is \textbf{biaffine} if it is affine in each variable separately. An affine map $P:\mathfrak a\to\mathfrak a$ is \textbf{idempotent} if $P^2=P$.

For $e\in\mathfrak a$, the operations
$$
x+_e y=\langle x,e,y\rangle,\quad
\alpha\cdot_e x=(\alpha,e,x)
$$
make the same underlying set a vector space with zero $e$. Following~\cite{ABR2025}, we denote this space by $T_e\mathfrak a$ and call it the \textbf{tangent space} to $\mathfrak a$ at $e$. In terms of these vector space operations, written without subscripts, we have
$$
\langle x,y,z\rangle=x-y+z,\quad
(\alpha,x,y)=(1-\alpha)x+\alpha y.
$$
For $e,u\in\mathfrak a$, the map $\tau_e^u:T_e\mathfrak a\to T_u\mathfrak a$, $\tau_e^u(x)=\langle x,e,u\rangle$, is a vector space isomorphism~\cite{BP2023a}. An affine combination $\alpha_1x_1+\cdots+\alpha_nx_n$ is independent of the chosen origin whenever $\alpha_1+\cdots+\alpha_n=1$.

Definition~\ref{d0} is equivalent to the usual description of an affine space by a free and transitive action of the additive group of a vector space $V$, called its \textbf{translation space}~\cite{BBRS2024,BP2023a}. Indeed, $T_e\mathfrak a$ acts on $\mathfrak a$ by $(x,v)\mapsto\langle x,e,v\rangle$. Conversely, given such an action, write $x+v$ for the action of $v\in V$ and $y-x\in V$ for the unique vector taking $x$ to $y$. Definition~\ref{d0} then holds with
$$
\langle x,y,z\rangle=x+(z-y),\quad
(\alpha,x,y)=x+\alpha(y-x).
$$

If $\mathfrak a$ and $\mathfrak b$ have translation spaces $V$ and $W$, an affine map $f:\mathfrak a\to\mathfrak b$ determines a unique linear map $\vec f:V\to W$ with $f(x+v)=f(x)+\vec f(v)$. We call $\vec f$ the \textbf{linear part} of $f$. For each $e\in\mathfrak a$, the map $f:T_e\mathfrak a\to T_{f(e)}\mathfrak b$ is linear and corresponds to $\vec f$ under the identifications with $V$ and $W$. Thus $f(x)=f(e)+\vec f(x-e)$~\cite{ABR2025,BP2023a}. A nonempty subset $\mathfrak b\subseteq\mathfrak a$ is an \textbf{affine subspace} if, for one (and hence every) $a\in\mathfrak b$, there is a vector subspace $W\subseteq V$ such that $\mathfrak b=a+W$.

\begin{defn}
\textup{\cite{ABR2025}} A \textbf{Lie affgebra} is an affine space $\mathfrak a$ equipped with a biaffine map $\{-,-\}:\mathfrak a\times\mathfrak a\to\mathfrak a$ such that, for all $x,y,z\in\mathfrak a$,
\begin{align*}
&\langle\{x,y\},\{x,x\},\{y,x\}\rangle=\{y,y\},\\
&\big\langle
\{x,\{y,z\}\},
\{x,\{x,x\}\},
\{y,\{z,x\}\},
\{y,\{y,y\}\},
\{z,\{x,y\}\}
\big\rangle
=
\{z,\{z,z\}\}.
\end{align*}
An affine map preserving the bracket is called a \textbf{homomorphism} of Lie affgebras. A bijective homomorphism is an \textbf{isomorphism}, and an isomorphism from a Lie affgebra to itself is an \textbf{automorphism}. A point $e\in\mathfrak a$ is \textbf{idempotent} if $\{e,e\}=e$.
\end{defn}

\begin{prop}\label{ttan}
\textup{\cite[Theorem~2.6]{ABR2025}} Let $(\mathfrak a,\{-,-\})$ be a Lie affgebra. For every $o\in\mathfrak a$, the vector space $T_o\mathfrak a$ is a Lie algebra with bracket
$$
[x,y]_o
=
\{x,y\}-\{x,o\}+\{o,o\}-\{o,y\}.
$$
The expression is evaluated in $T_o\mathfrak a$. Tangent Lie algebras at different points are isomorphic.
\end{prop}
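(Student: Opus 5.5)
Fix $o\in\mathfrak a$ and work in $T_o\mathfrak a$, where $\langle x,y,z\rangle=x-y+z$. Since $\{-,-\}$ is biaffine, we expand
$$
\{x,y\}=\beta(x,y)+\lambda(x)+\mu(y)+c,
$$
with $\beta$ bilinear, $\lambda,\mu$ linear and $c=\{o,o\}$. By the formula for $[x,y]_o$, which is evaluated in $T_o\mathfrak a$, we get $[x,y]_o=\{x,y\}-\{x,o\}+\{o,o\}-\{o,y\}=\beta(x,y)$. The plan is to show that $\beta$ is antisymmetric and satisfies the Jacobi identity, using only the two axioms written in these coordinates.

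For antisymmetry, substitute the expansion into the first axiom. The constant terms give $c-c+c=c$. The linear terms in $x$ and $y$ cancel, since the left side contributes $\lambda(x)+\mu(y)-\lambda(x)-\mu(x)+\lambda(y)+\mu(x)=\lambda(y)+\mu(y)$, which agrees with the right side. What remains is $\beta(x,y)-\beta(x,x)+\beta(y,x)=\beta(y,y)$. Setting $y=0$ gives $\beta(x,x)=0$, and then $\beta(x,y)=-\beta(y,x)$.

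For the Jacobi identity, write the second axiom in $T_o\mathfrak a$ as
$$
\{x,\{y,z\}\}-\{x,\{x,x\}\}+\{y,\{z,x\}\}-\{y,\{y,y\}\}+\{z,\{x,y\}\}=\{z,\{z,z\}\}.
$$
We expand each double bracket as a polynomial in $x,y,z$ and keep only the trilinear component in $(x,y,z)$. This is legitimate because the identity holds for all $x,y,z$ in a vector space over a field of characteristic zero, so each multihomogeneous component must vanish separately.

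The terms $\{x,\{x,x\}\}$, $\{y,\{y,y\}\}$ and $\{z,\{z,z\}\}$ depend on a single variable, so they contribute no multilinear part in all three variables. In $\{x,\{y,z\}\}=\beta(x,\{y,z\})+\lambda(x)+\mu(\{y,z\})+c$, the part that is linear in each of $x,y,z$ is exactly $\beta(x,\beta(y,z))$. Indeed, $\mu(\beta(y,z))$ does not involve $x$, and $\beta(x,\lambda(y)+\mu(z)+c)$ has degree at most two. Hence the trilinear part of the axiom is
$$
\beta(x,\beta(y,z))+\beta(y,\beta(z,x))+\beta(z,\beta(x,y))=0,
$$
which is the Jacobi identity. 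The step that needs the most care is the claim that this $(1,1,1)$-component can be isolated. To make it precise, I would replace $x,y,z$ by $sx,ty,uz$ and read off the coefficient of $stu$, which is valid because $\mathbf k$ is infinite.

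It remains to show that tangent Lie algebras at different points are isomorphic. For $u\in\mathfrak a$, consider $\tau_o^u(x)=\langle x,o,u\rangle$, which corresponds to the identity on the translation space $V$. At the origin $u$ the bracket has the same bilinear part, because changing the origin only shifts the arguments by constants, and this changes the linear and constant terms but not $\beta$. Concretely, $[x,y]_u$ computed in translation-space coordinates equals $\beta$ applied to $x-u$ and $y-u$. Therefore $\tau_o^u$ is a Lie algebra isomorphism from $T_o\mathfrak a$ to $T_u\mathfrak a$.
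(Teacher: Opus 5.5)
Your proof is correct and complete. The paper itself gives no proof of this proposition; it is recalled from the cited reference, so there is no in-paper argument to compare against. Your coordinate route is, however, the viewpoint the paper adopts right afterwards in Proposition~\ref{tstr}. There your $\beta$ is the tangent bracket $[-,-]$, your $\lambda$ is the paper's $\kappa-\lambda$, your $\mu$ is the paper's $\lambda$, and $c=s$; rename your maps to avoid the clash with the paper's $\lambda$.

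Three refinements are worth making.

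First, the expansion of a biaffine map is best justified by setting $\beta(x,y)=\{x,y\}-\{x,0\}-\{0,y\}+\{0,0\}$ in $T_o\mathfrak a$. This expression is affine in each argument and vanishes when either argument is $0$, hence it is bilinear, and it equals $[x,y]_o$ by definition.

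Second, isolating the trilinear component does not actually need $\mathbf{k}$ to be infinite. Let $F(x,y,z)$ be the difference of the two sides of the second axiom. The alternating sum
\[
F(x,y,z)-F(0,y,z)-F(x,0,z)-F(x,y,0)+F(x,0,0)+F(0,y,0)+F(0,0,z)-F(0,0,0)
\]
annihilates every term that does not depend on all three variables. It leaves exactly the cyclic Jacobi sum for $\beta$.

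Third, the components you discard are not vacuous. The part of the second axiom bilinear in $x,y$ reads $\mu(\beta(x,y))+\beta(y,\mu(x))+\beta(x,\lambda(y))=0$, which in the paper's notation is precisely Eq.~(\ref{e1}). So the same expansion also underlies the structure description in Proposition~\ref{tstr}.
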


We recall the following description of Lie affgebra brackets.

\begin{prop}\label{tstr}
\textup{\cite[Theorem~3.1]{ABR2025}} Let $(\mathfrak g,[-,-])$ be a Lie algebra and let $\kappa,\lambda\in\operatorname{End}(\mathfrak g)$ satisfy
\begin{equation}\label{e1}
\lambda([x,y])
=
[\lambda(x),y]-[x,\kappa(y)]+[x,\lambda(y)]
\end{equation}
for all $x,y\in\mathfrak g$. Then, for every $s\in\mathfrak g$,
\begin{equation}\label{e2}
\{x,y\}
=
[x,y]+\kappa(x)+\lambda(y-x)+s
\end{equation}
defines a Lie affgebra on the affine space underlying $\mathfrak g$.
Conversely, after choosing any point, every Lie affgebra is of this form.
\end{prop}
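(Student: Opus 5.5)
We verify the two identities directly in coordinates for the forward direction, and for the converse we decompose a biaffine map at a chosen origin and read the computation backwards. Throughout, work in $\mathfrak g$ with $\langle x,y,z\rangle=x-y+z$, so both Lie affgebra identities become signed sums of vectors.

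\emph{Forward direction.} The map \eqref{e2} is biaffine, being bilinear plus affine terms in each variable. For the antisymmetry identity, $\{x,x\}=\kappa(x)+s$. Then $\{x,y\}-\{x,x\}+\{y,x\}$ collapses to $\kappa(y)+s=\{y,y\}$, since $[x,y]+[y,x]=0$ and the two $\lambda$-terms cancel. For the Jacobi identity, rewrite it as
$$\textstyle\sum_{\mathrm{cyc}}\big(\{x,\{y,z\}\}-\{x,\{x,x\}\}\big)=0.$$
Each summand is linear in $w=\{y,z\}-\{x,x\}=[y,z]+\kappa(y)-\kappa(x)+\lambda(z-y)$, with value $[x,w]+\lambda(w)$. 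We then sort the cyclic sum by type.
\begin{enumerate}[label=\textup{(\roman*)}]
\item The double brackets $[x,[y,z]]$ vanish by the Jacobi identity of $\mathfrak g$.
\item The terms $\lambda\kappa(y-x)$ and $\lambda^2(z-y)$ telescope to zero under cyclic summation.
\item The terms $\lambda([y,z])$ are expanded by \eqref{e1}. After this, $[x,\kappa(y)]$ cancels cyclically against $-[y,\kappa(z)]$. The six mixed terms $[x,\lambda(z)]-[x,\lambda(y)]+[y,\lambda(z)]+[\lambda(y),z]$ cancel pairwise once all cyclic shifts are written out.
\item What remains is $-\sum_{\mathrm{cyc}}[x,\kappa(x)]$. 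It vanishes because \eqref{e1} with $y=x$ gives $0=[\lambda(x),x]-[x,\kappa(x)]+[x,\lambda(x)]=-[x,\kappa(x)]$.
\end{enumerate}

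\emph{Converse.} Fix $o\in\mathfrak a$ and use $T_o\mathfrak a$ as $\mathfrak g$ with the bracket of Proposition~\ref{ttan}. A biaffine map on a vector space decomposes uniquely as
$$\{x,y\}=B(x,y)+A(x)+C(y)+s,$$
where $B$ is bilinear, $A,C$ are linear and $s=\{o,o\}$. By Proposition~\ref{ttan}, $[x,y]_o=B(x,y)$, so $B$ is a Lie bracket. Set $\lambda=C$ and $\kappa=A+C$. Then $\{x,y\}=[x,y]_o+\kappa(x)+\lambda(y-x)+s$, which is exactly \eqref{e2}.

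It remains to derive \eqref{e1}. Running the Jacobi computation above backwards shows that the affine Jacobi identity is equivalent to the vanishing of the cyclic sum of
$$D(x,y)=\lambda([x,y])-[\lambda(x),y]+[x,\kappa(y)]-[x,\lambda(y)],$$
with arguments arranged as in that computation.

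\emph{Main obstacle.} The delicate step is extracting the pointwise identity $D=0$ from the vanishing of this cyclic expression. The plan is to exploit the different degrees of homogeneity. Setting some arguments equal to $0=o$ or to each other isolates the pieces. For example, $y=z=0$ together with the antisymmetry identity should give $D(x,x)=0$, i.e.\ $[x,\kappa(x)]=0$. Polarizing in $x$, and then varying one argument linearly while the others stay fixed, should then separate $D(x,y)$ itself. This bookkeeping is the step I expect to need the most care.
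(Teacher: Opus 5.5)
Your forward direction is correct. It is the same cyclic computation the paper carries out for the descendant bracket in the proof of Theorem~\ref{t1}; the paper quotes this proposition from ABR2025 without proof. The converse is set up correctly: you decompose the biaffine map at $o$, take $B=[-,-]_o$ via Proposition~\ref{ttan}, and set $\lambda=C$, $\kappa=A+C$. However, it stops at the one step that finishes it, and there is an imprecision to fix first. Steps (iii) and (iv) of your forward computation both used \eqref{e1}. Running the computation backwards, using only the Jacobi identity for $[-,-]_o$, therefore gives
\[
J(x,y,z):=\sum_{\mathrm{cyc}}\bigl(\{x,\{y,z\}\}-\{x,\{x,x\}\}\bigr)=\sum_{\mathrm{cyc}}\bigl(D(y,z)-D(x,x)\bigr),\qquad D(x,x)=[x,\kappa(x)].
\]
The diagonal term must be kept. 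The cyclic sum of $D$ alone vanishes identically at $(x,o,o)$, so your claim that $y=z=o$ yields $D(x,x)=0$ holds only for the expression above.

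With this correction, no polarization or further bookkeeping is needed. Two substitutions give
\[
J(x,o,o)=-[x,\kappa(x)],\qquad J(x,y,o)=D(x,y)-[x,\kappa(x)]-[y,\kappa(y)].
\]
The affine Jacobi identity makes both vanish, so $[x,\kappa(x)]=0$ and then $D(x,y)=0$, which is \eqref{e1}. The antisymmetry identity is not needed here, because $[-,-]_o$ is already a Lie bracket by Proposition~\ref{ttan}. Polarizing $[x,\kappa(x)]=0$ only gives $\kappa\in QC(\mathfrak g)$, which does not imply \eqref{e1}. It is the evaluation at $z=o$ that isolates $D(x,y)$.
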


We denote the Lie affgebra in Eq.~(\ref{e2}) by $\mathfrak a(\mathfrak g;\kappa,\lambda,s)$. We call $\kappa$ and $\lambda$ its \textbf{structural endomorphisms} at the chosen origin, and $s$ its constant term. They can be recovered from the bracket by $s=\{0,0\}$, $\kappa(x)=\{x,x\}-s$ and $\lambda(x)=\{0,x\}-s$.

In these coordinates, the chosen origin is $0\in\mathfrak g$. For an affine map $P:\mathfrak a\to\mathfrak a$, set
$$
p=P(0)\in\mathfrak g,\quad R(x)=P(x)-P(0)\quad(x\in\mathfrak g).
$$
Since $P$ preserves the heap operation and the affine action, we have
\begin{align*}
R(x+y)&=P(x+y)-P(0)=P(x)+P(y)-2P(0)=R(x)+R(y),\\
R(\alpha x)&=P(\alpha x)-P(0)=\alpha\bigl(P(x)-P(0)\bigr)=\alpha R(x)
\end{align*}
for all $x,y\in\mathfrak g$ and $\alpha\in\mathbf{k}$. Thus $R\in\operatorname{End}(\mathfrak g)$ is the linear part of $P$, and $p$ is its translation vector relative to the chosen origin. Consequently, $P(x)=R(x)+p$. This decomposition is unique: evaluating at $0$ determines $p$, and then $R(x)=P(x)-p$. We also write $P=R+p$.

The \textbf{quasicentroid} is $QC(\mathfrak g):=\{D\in\operatorname{End}(\mathfrak g)\mid[D(x),y]=[x,D(y)]\text{ for all }x,y\in\mathfrak g\}$. Eq.~(\ref{e1}) implies $\kappa\in QC(\mathfrak g)$~\cite{ABR2025,LL2000}. A direct calculation gives
$$
[x,(\lambda-\kappa)(y)]-[y,(\lambda-\kappa)(x)]-(\lambda-\kappa)([x,y]) =\kappa([x,y])-[\kappa(x),y].
$$
The \textbf{centroid} is $C(\mathfrak g):=\{D\in\operatorname{End}(\mathfrak g)\mid D([x,y])=[D(x),y]=[x,D(y)]\text{ for all }x,y\in\mathfrak g\}$. Consequently, $\lambda-\kappa$ is a derivation exactly when $\kappa\in C(\mathfrak g)$~\cite{ABR2025}.

We also recall the description of Lie affgebra homomorphisms.

\begin{prop}\label{thom}
\textup{\cite[Theorem~3.4]{ABR2025}} Let $\mathfrak a=\mathfrak a(\mathfrak g;\kappa,\lambda,s)$ and $\mathfrak a'=\mathfrak a(\mathfrak g';\kappa',\lambda',s')$.
Let $\psi:\mathfrak g\to\mathfrak g^{\prime}$ be linear and $q\in\mathfrak g^{\prime}$. The affine map $\varphi(x)=\psi(x)+q$ is a Lie affgebra homomorphism if and only if $\psi:\mathfrak g\to\mathfrak g'$ is a Lie algebra homomorphism and
\begin{equation}\label{e3}
\psi\kappa=\kappa'\psi,\quad
\psi\lambda=(\operatorname{ad}_q+\lambda')\psi,\quad
\psi(s)=s'-q+\kappa'(q).
\end{equation}
\end{prop}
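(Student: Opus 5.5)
Proposition~\ref{thom} is an "if and only if," so I will prove both directions by expanding the homomorphism condition $\varphi(\{x,y\})=\{\varphi(x),\varphi(y)\}'$ in coordinates and splitting it into bilinear, linear and constant parts. Both brackets have the form in Eq.~(\ref{e2}) by Proposition~\ref{tstr}, and $\varphi(x)=\psi(x)+q$ with $\psi$ linear, so each side is a polynomial map of degree at most two in $(x,y)\in\mathfrak g\times\mathfrak g$. Over a field of characteristic zero, two such maps agree identically exactly when their homogeneous components of bidegrees $(1,1)$, $(1,0)$, $(0,1)$ and $(0,0)$ agree. I will obtain these components concretely by substituting $x=y=0$, then $y=0$, then $x=0$, and finally subtracting these from the general identity.

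The left side is
$$
\varphi(\{x,y\})=\psi([x,y])+\psi\kappa(x)+\psi\lambda(y)-\psi\lambda(x)+\psi(s)+q.
$$
For the right side I expand
$$
\{\psi(x)+q,\psi(y)+q\}'=[\psi(x)+q,\psi(y)+q]'+\kappa'(\psi(x)+q)+\lambda'(\psi(y)-\psi(x))+s'.
$$
The bracket term splits as $[\psi(x),\psi(y)]'+[\psi(x),q]'+[q,\psi(y)]'$, because $[q,q]'=0$. The differences of arguments cancel the translations, so $\lambda'$ acts only on $\psi(y)-\psi(x)$. Comparing the four components, I expect the following.
\begin{enumerate}
\item The bilinear part gives $\psi([x,y])=[\psi(x),\psi(y)]'$, so $\psi$ is a Lie algebra homomorphism.
\item The constant part gives $\psi(s)+q=\kappa'(q)+s'$, which is the third equation in Eq.~(\ref{e3}).
\item The $y$-linear part gives $\psi\lambda(y)=\operatorname{ad}_q\psi(y)+\lambda'\psi(y)$, which is the second equation.
\item The $x$-linear part gives $\psi\kappa(x)-\psi\lambda(x)=-[q,\psi(x)]'+\kappa'\psi(x)-\lambda'\psi(x)$. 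Here I rewrite $[\psi(x),q]'$ as $-\operatorname{ad}_q\psi(x)$.
\end{enumerate}
Substituting the $y$-linear equation into the $x$-linear one cancels the $\operatorname{ad}_q$ and $\lambda'$ terms and leaves $\psi\kappa=\kappa'\psi$. Conversely, the three conditions in Eq.~(\ref{e3}) together with $\psi$ being a homomorphism reassemble all four components, which proves the reverse direction.

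The only delicate step is the bookkeeping of signs between $[\psi(x),q]'$ and $\operatorname{ad}_q$, and the observation that the $x$-linear equation is redundant once the other two equations are combined. I expect no real obstacle. The result could also be cited directly from \cite[Theorem~3.4]{ABR2025}, with the computation above serving as a check.
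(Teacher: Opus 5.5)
Your coefficient comparison is correct: the constant, $y$-linear and bilinear parts give the third equation of Eq.~(\ref{e3}), the second equation and the homomorphism property of $\psi$, and the $x$-linear part reduces to $\psi\kappa=\kappa'\psi$ once the $y$-linear equation is substituted. The paper does not reprove this statement but recalls it from \cite[Theorem~3.4]{ABR2025}; your argument uses the same component-by-component comparison that the paper applies in Corollary~\ref{c2} and Proposition~\ref{p14}, and your appeal to characteristic zero is unnecessary, because the substitutions $x=y=0$, $y=0$ and $x=0$ isolate the components of any biaffine map over any field.
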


\subsection{Lie subaffgebras}\label{u6}

\begin{defn}
\textup{\cite{ABR2025}} Let $(\mathfrak a,\{-,-\})$ be a Lie affgebra. A nonempty affine subspace $\mathfrak b\subseteq\mathfrak a$ is called a \textbf{Lie subaffgebra} if $\{\mathfrak b,\mathfrak b\}\subseteq\mathfrak b$.
\end{defn}

We shall use the following characterization of Lie subaffgebras.

\begin{prop}\label{p1}
\textup{\cite[Proposition~3.8]{ABR2025}} Let $\mathfrak a=\mathfrak a(\mathfrak g;\kappa,\lambda,s)$ and let $\mathfrak b=a+\mathfrak h$ be an affine subspace. Then $\mathfrak b$ is a Lie subaffgebra if and only if $\mathfrak h$ is a Lie subalgebra of $\mathfrak g$ and
$$
\kappa(a)-a+s\in\mathfrak h,\quad
\kappa(\mathfrak h)\subseteq\mathfrak h,\quad
(\lambda+\operatorname{ad}_a)(\mathfrak h)\subseteq\mathfrak h.
$$
In that case
$$
\mathfrak b
\cong
\mathfrak a
\big(
\mathfrak h;
\kappa|_{\mathfrak h},
(\lambda+\operatorname{ad}_a)|_{\mathfrak h},
\kappa(a)-a+s
\big).
$$
\end{prop}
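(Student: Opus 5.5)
We expand the bracket on $\mathfrak b$ in the coordinates of Proposition~\ref{tstr} and separate the result by degree. Every point of $\mathfrak b$ has the form $a+h$ with $h\in\mathfrak h$, so $\mathfrak b$ is a Lie subaffgebra exactly when $\{a+h,a+k\}-a\in\mathfrak h$ for all $h,k\in\mathfrak h$. By Eq.~(\ref{e2}) and bilinearity of $[-,-]$,
$$
\{a+h,a+k\}-a
=[h,k]+\kappa(h)+(\lambda+\operatorname{ad}_a)(k-h)+\big(\kappa(a)-a+s\big),
$$
where we used $[a,a]=0$ and $[h,a]=-\operatorname{ad}_a(h)$. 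The right-hand side has three parts: a bilinear part $[h,k]$, a part $\kappa(h)+(\lambda+\operatorname{ad}_a)(k-h)$ that is linear in $(h,k)$, and a constant $\kappa(a)-a+s$.

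For the forward direction, we first take $h=k=0$, which gives $\kappa(a)-a+s\in\mathfrak h$. Subtracting this constant, the function
$$
\Phi(h,k)=[h,k]+\kappa(h)+(\lambda+\operatorname{ad}_a)(k-h)
$$
takes values in $\mathfrak h$ for all $h,k\in\mathfrak h$. We then replace $(h,k)$ by $(th,tk)$. This gives $t^2[h,k]+tL(h,k)\in\mathfrak h$, where $L$ denotes the linear part of $\Phi$. Taking $t=1$ and $t=-1$ and combining the results, we conclude that $[h,k]\in\mathfrak h$ and $L(h,k)\in\mathfrak h$ separately. Only $2\neq0$ is needed here, which holds since $\operatorname{char}\mathbf k=0$. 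The first statement says that $\mathfrak h$ is a Lie subalgebra. For the linear part, setting $h=0$ gives $(\lambda+\operatorname{ad}_a)(\mathfrak h)\subseteq\mathfrak h$. Setting $k=0$ gives $\kappa(h)-(\lambda+\operatorname{ad}_a)(h)\in\mathfrak h$, and combining this with the previous inclusion yields $\kappa(\mathfrak h)\subseteq\mathfrak h$.

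The converse is immediate from the displayed formula, since each of its summands lies in $\mathfrak h$ under the stated conditions. For the isomorphism, we use the bijection $\mathfrak h\to\mathfrak b$, $h\mapsto a+h$. This map is affine, because it is a translation restricted to $\mathfrak h$. Transporting the bracket along it, the displayed formula reads
$$
[h,k]+\kappa|_{\mathfrak h}(h)+(\lambda+\operatorname{ad}_a)|_{\mathfrak h}(k-h)+\big(\kappa(a)-a+s\big).
$$
This is precisely the bracket of $\mathfrak a\big(\mathfrak h;\kappa|_{\mathfrak h},(\lambda+\operatorname{ad}_a)|_{\mathfrak h},\kappa(a)-a+s\big)$ in the form of Eq.~(\ref{e2}). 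The transported bracket is a Lie affgebra bracket because $\mathfrak b$ is a Lie subaffgebra, so there is no need to verify Eq.~(\ref{e1}) for the restricted maps separately.

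The only step that needs any care is separating the bilinear and linear parts. The scaling argument with $t=\pm1$ handles it, so I do not expect a genuine obstacle; the remainder is bookkeeping.
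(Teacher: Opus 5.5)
Your argument is correct. The paper quotes this proposition without proof, so there is no proof of its own to compare with. Your expansion
$\{a+h,a+k\}-a=[h,k]+\kappa(h)+(\lambda+\operatorname{ad}_a)(k-h)+\kappa(a)-a+s$
is exactly the change-of-origin computation of Lemma~\ref{l2}. Reading off the conditions by special evaluations is also what the paper does in the proof of Theorem~\ref{t5}.

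A minor simplification is available. Instead of scaling by $t=\pm1$, evaluate $\Phi(h,h)=\kappa(h)$ and $\Phi(0,k)=(\lambda+\operatorname{ad}_a)(k)$ directly. Then $[h,k]=\Phi(h,k)-\kappa(h)-(\lambda+\operatorname{ad}_a)(k-h)\in\mathfrak h$ follows without dividing by $2$.
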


The special case in which $a$ is chosen as the origin will be used frequently.

\begin{coro}\label{c1}
Assume $\mathfrak a=\mathfrak a(\mathfrak g;\kappa,\lambda,0)$. A vector subspace $\mathfrak h\subseteq\mathfrak g$, regarded as an affine subspace through the origin, is a Lie subaffgebra if and only if
$$
[\mathfrak h,\mathfrak h]\subseteq\mathfrak h,
\quad
\kappa(\mathfrak h)\subseteq\mathfrak h,
\quad
\lambda(\mathfrak h)\subseteq\mathfrak h.
$$
\end{coro}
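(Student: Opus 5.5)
This is the special case $a=0$, $s=0$ of Proposition~\ref{p1}, so the plan is to substitute these values and simplify. An affine subspace through the origin is $\mathfrak b=0+\mathfrak h$ with $\mathfrak h$ a vector subspace of $\mathfrak g$. With $a=0$ we have $\operatorname{ad}_a=\operatorname{ad}_0=0$, and with $s=0$ we have $\kappa(a)-a+s=\kappa(0)=0$.

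Proposition~\ref{p1} says that $\mathfrak b$ is a Lie subaffgebra if and only if the following hold:
\begin{enumerate}[label=\textup{(\roman*)}]
\item $\mathfrak h$ is a Lie subalgebra of $\mathfrak g$;
\item $\kappa(a)-a+s\in\mathfrak h$;
\item $\kappa(\mathfrak h)\subseteq\mathfrak h$;
\item $(\lambda+\operatorname{ad}_a)(\mathfrak h)\subseteq\mathfrak h$.
\end{enumerate}
After the substitution, condition (ii) reads $0\in\mathfrak h$. This holds automatically because $\mathfrak h$ is a vector subspace. Condition (iv) becomes $\lambda(\mathfrak h)\subseteq\mathfrak h$. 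Condition (i) says $[\mathfrak h,\mathfrak h]\subseteq\mathfrak h$, since $\mathfrak h$ is already a subspace. These are exactly the three conditions in the statement.

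As a sanity check that avoids the cited proposition, one can compute directly. For $x,y\in\mathfrak h$, Eq.~(\ref{e2}) with $s=0$ gives
$$\{x,y\}=[x,y]+\kappa(x)+\lambda(y-x).$$
The three conditions clearly imply that this lies in $\mathfrak h$.

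For the converse, assume $\{\mathfrak h,\mathfrak h\}\subseteq\mathfrak h$ and isolate each term:
\begin{itemize}
\item Taking $x=0$ gives $\lambda(y)\in\mathfrak h$, so $\lambda(\mathfrak h)\subseteq\mathfrak h$.
\item Taking $y=x$ gives $[x,x]+\kappa(x)=\kappa(x)\in\mathfrak h$, so $\kappa(\mathfrak h)\subseteq\mathfrak h$.
\item Subtracting the $\kappa$ and $\lambda$ terms, which are now known to lie in $\mathfrak h$, gives $[x,y]\in\mathfrak h$.
\end{itemize}
No step is a genuine obstacle. The only point to check is that the affine-subspace hypothesis of Proposition~\ref{p1} is met with base point $a=0$.
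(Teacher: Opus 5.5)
Your proposal is correct and follows the paper's own proof, which simply applies Proposition~\ref{p1} with $a=0$ and $s=0$. Your direct check via Eq.~(\ref{e2}) is also valid: evaluate at $x=0$ to get the $\lambda$ condition, at $y=x$ to get the $\kappa$ condition, then subtract to get $[x,y]\in\mathfrak h$. That check makes the corollary independent of the cited result.
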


\begin{proof}
Apply Proposition~\ref{p1} with $a=0$ and $s=0$.
\end{proof}

\subsection{Rota-Baxter Lie algebras}\label{u7}

\begin{defn}
\textup{\cite{BGN2010,G2012}} Let $(\mathfrak g,[-,-])$ be a Lie algebra and $\theta\in\mathbf{k}$. A linear map $R:\mathfrak g\to\mathfrak g$ is a \textbf{Rota-Baxter operator of weight $\theta$} if, for all $x,y\in\mathfrak g$,
\begin{equation}\label{e6}
[R(x),R(y)]
=
R\big([R(x),y]+[x,R(y)]+\theta[x,y]\big).
\end{equation}

\end{defn}

For any $R\in\operatorname{End}(\mathfrak g)$ and $\theta\in\mathbf{k}$, set
\begin{equation}\label{e7}
[x,y]_{R,\theta}
=
[R(x),y]+[x,R(y)]+\theta[x,y].
\end{equation}
If $R$ satisfies Eq.~(\ref{e6}), then the operation in Eq.~(\ref{e7}) is a Lie bracket and $R:(\mathfrak g,[-,-]_{R,\theta})\to(\mathfrak g,[-,-])$ is a Lie algebra homomorphism~\cite{BGN2010}.

At weight $-1$, idempotent operators correspond to decompositions into Lie subalgebras.

\begin{prop}\label{p2}
\textup{\cite{G2012}} Let $\mathfrak g$ be a Lie algebra and let $R\in\operatorname{End}(\mathfrak g)$ satisfy $R^2=R$. Then $R$ is a Rota-Baxter operator of weight $-1$ if and only if $\operatorname{Im}R$ and $\ker R$ are Lie subalgebras of $\mathfrak g$.
\end{prop}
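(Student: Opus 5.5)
We prove the two implications separately, using only $R^2=R$ and the resulting vector space decomposition $\mathfrak g=\operatorname{Im}R\oplus\ker R$, with $R$ acting as the identity on $\operatorname{Im}R$ and as zero on $\ker R$. The key algebraic identity is
$$
[x,y]_{R,-1}=[R(x),y]+[x,R(y)]-[x,y]=[R(x),R(y)]-[x-R(x),\,y-R(y)],
$$
which we check by expanding the right-hand side. Since $R^2=R$, we have $x-R(x)\in\ker R$, so this identity splits the descendant bracket into an $\operatorname{Im}R$ piece and a $\ker R$ piece.

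\emph{Subalgebras imply Rota-Baxter.} Suppose $\operatorname{Im}R$ and $\ker R$ are Lie subalgebras. Then $[R(x),R(y)]\in\operatorname{Im}R$, so $R$ fixes it. Also $x-R(x)$ and $y-R(y)$ lie in $\ker R$, so their bracket lies in $\ker R$ and is killed by $R$. Applying $R$ to the identity above gives
$$
R([x,y]_{R,-1})=[R(x),R(y)],
$$
which is Eq.~(\ref{e6}) with $\theta=-1$.

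\emph{Rota-Baxter implies subalgebras.} Suppose Eq.~(\ref{e6}) holds with $\theta=-1$. For $a=R(x)$ and $b=R(y)$ in $\operatorname{Im}R$, Eq.~(\ref{e6}) shows directly that $[a,b]=R(\cdots)$, so $[a,b]\in\operatorname{Im}R$. For $u,v\in\ker R$, substitute $x=u$ and $y=v$ into Eq.~(\ref{e6}). The left side is $0$, and since $R(u)=R(v)=0$ the right side is $R(-[u,v])$. Hence $R([u,v])=0$, so $[u,v]\in\ker R$.

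There is no real obstacle here. The only point needing care is the algebraic identity for $[x,y]_{R,-1}$, and that is a direct expansion.
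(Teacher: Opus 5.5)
Your proof is correct and is essentially the paper's argument. The forward direction reads off closure of $\operatorname{Im}R$ and $\ker R$ directly from the weight $-1$ identity. For the converse, the paper writes $x=x_++x_-$, $y=y_++y_-$ and computes $R([x_+,y_+]-[x_-,y_-])=[x_+,y_+]$, which is your identity $[x,y]_{R,-1}=[R(x),R(y)]-[x-R(x),y-R(y)]$ (Eq.~(\ref{e12}) in the paper) written in components.
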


\begin{proof}
Since $R^2=R$, we have $\mathfrak g=\operatorname{Im}R\oplus\ker R$. For $x,y\in\operatorname{Im}R$, the Rota-Baxter identity gives $[x,y]=R([x,y])$. For $x,y\in\ker R$, it gives $R([x,y])=0$. Thus both summands are Lie subalgebras.

Conversely, suppose that both summands are Lie subalgebras. Write $x=x_++x_-$ and $y=y_++y_-$, with $x_\pm,y_\pm\in\mathfrak g_\pm$, where $\mathfrak g_+=\operatorname{Im}R$ and $\mathfrak g_-=\ker R$. Then
\begin{align*}
R\big([R(x),y]+[x,R(y)]-[x,y]\big)
&=R\big([x_+,y_+]-[x_-,y_-]\big)\\
&=[x_+,y_+]=[R(x),R(y)].
\end{align*}
This is the Rota-Baxter identity of weight $-1$.
\end{proof}

For an affine map $P:\mathfrak a\to\mathfrak a$ and $o\in\mathfrak a$, its linear part at $o$ is $R_o(x)=\langle o,P(o),P(x)\rangle$, viewed as a linear endomorphism of $T_o\mathfrak a$.

\begin{defn}\label{d3}
Let $\mathfrak a$ be a Lie affgebra and let $\theta\in\mathbf{k}$. An affine map $P:\mathfrak a\to\mathfrak a$ is called a \textbf{tangent Rota-Baxter operator of weight $\theta$} if $R_o$ is a Rota-Baxter operator of weight $\theta$ on the tangent Lie algebra $T_o\mathfrak a$ for some $o\in\mathfrak a$.
\end{defn}

\begin{prop}\label{p15}
The tangent Rota-Baxter condition holds at one origin if and only if it holds at every origin.
\end{prop}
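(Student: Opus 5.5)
The plan is to show that the translation isomorphism $\tau_o^u:T_o\mathfrak a\to T_u\mathfrak a$, $\tau_o^u(x)=\langle x,o,u\rangle$, is an isomorphism of tangent Lie algebras that conjugates $R_o$ into $R_u$. Once we have this, the Rota-Baxter identity of weight $\theta$ transfers directly. If $\phi:(\mathfrak g,[-,-])\to(\mathfrak g',[-,-]')$ is a Lie isomorphism with $R'=\phi R\phi^{-1}$, then applying $\phi$ to Eq.~(\ref{e6}) for $R$ gives Eq.~(\ref{e6}) for $R'$. So it suffices to verify two facts about $\tau=\tau_o^u$.

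\emph{Step 1: $\tau R_o=R_u\tau$.} We work in coordinates of one fixed tangent space, or equivalently with the translation space $V$. There, $R_o(x)=o-P(o)+P(x)$, which is $o+\vec P(x-o)$. Likewise $\tau(x)=x-o+u$. Then
\[
R_u(\tau(x))=u+\vec P(x-o)=\tau(R_o(x)).
\]
This is routine once we use $f(x)=f(e)+\vec f(x-e)$. Conceptually, both $R_o$ and $R_u$ correspond to the same linear map $\vec P$ on $V$, under the identifications $T_o\mathfrak a\cong V\cong T_u\mathfrak a$ given by $x\mapsto x-o$ and $x\mapsto x-u$. Under these identifications $\tau$ becomes the identity of $V$.

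\emph{Step 2: $\tau$ is a Lie algebra isomorphism $T_o\mathfrak a\to T_u\mathfrak a$.} Proposition~\ref{ttan} only asserts that the tangent algebras are isomorphic, so we need the specific map $\tau$. We choose $o$ as origin and write the bracket as $\{x,y\}=[x,y]+\kappa(x)+\lambda(y-x)+s$ by Proposition~\ref{tstr}. We then compute $[x,y]_u$ from the formula in Proposition~\ref{ttan}, evaluated in $T_u\mathfrak a$; in $o$-coordinates, subtraction in $T_u$ is $a-_u b=a-b+u$. The constant, $\kappa$ and $\lambda$ terms cancel in the alternating sum $\{x,y\}-\{x,u\}+\{u,u\}-\{u,y\}$ by biaffinity: it is a second mixed difference, so only the bilinear part survives. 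This gives
\[
[x,y]_u=[x-u,y-u]+u.
\]
Substituting $x=\tau(a)=a+u$ and $y=\tau(b)=b+u$ yields $[\tau a,\tau b]_u=[a,b]+u=\tau([a,b]_o)$.

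The main obstacle is keeping the bookkeeping correct in Step 2, namely evaluating the four-term expression in $T_u\mathfrak a$ rather than in $T_o\mathfrak a$. I would handle this by expressing everything in the translation space $V$. The bracket $[x,y]_u$ corresponds to the vector $\{x,y\}-\{x,u\}+\{u,u\}-\{u,y\}$, which is a well-defined element of $V$ because the coefficients sum to zero. That vector equals $B(x-u,y-u)$, where $B$ is the bilinear part of the biaffine map $\{-,-\}$, and $B$ is independent of the base point. Hence every tangent bracket is the same Lie bracket $B$ on $V$. Together with Step 1, every pair $(T_u\mathfrak a,R_u)$ is identified with the single pair $(V,B,\vec P)$, and the proposition follows.
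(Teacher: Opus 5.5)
Your proof is correct and follows the paper's route: you show that the translation $\tau_o^u$ is a Lie algebra isomorphism $T_o\mathfrak a\to T_u\mathfrak a$ satisfying $R_u\tau_o^u=\tau_o^uR_o$, so the Rota-Baxter identity transfers by conjugation. Your Step~2 goes beyond the paper, which only asserts that $\tau_o^u$ is a Lie isomorphism; you verify it explicitly via the formula $[x,y]_u=[x-u,y-u]+u$ in $o$-coordinates.
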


\begin{proof}
For $o,q\in\mathfrak a$, the translation $\tau_o^q(x)=\langle x,o,q\rangle$ is a Lie algebra isomorphism $T_o\mathfrak a\to T_q\mathfrak a$. Since $P$ is affine, $R_q\tau_o^q=\tau_o^qR_o$. Thus $R_q$ and $R_o$ are conjugate by a Lie algebra isomorphism, which preserves the Rota-Baxter identity.
\end{proof}

In coordinates $\mathfrak a=\mathfrak a(\mathfrak g;\kappa,\lambda,s)$ and $P(x)=R(x)+p$, the map $P$ is tangent Rota-Baxter of weight $\theta$ if and only if $R$ is Rota-Baxter of weight $\theta$ on $\mathfrak g$. This condition imposes no restriction on $p$.

\subsection{Affine Rota-Baxter operators}\label{u9}

The coefficients in the ordinary weighted descendant sum to one precisely at weight $-1$. At this weight, its three bracket values therefore form an affine combination without any correction.

\begin{defn}\label{d1}
Let $(\mathfrak a,\{-,-\})$ be a Lie affgebra and let $P:\mathfrak a\to\mathfrak a$ be affine. For $x,y\in\mathfrak a$, define the \textbf{descendant operation} by
\begin{equation}\label{e8}
\{x,y\}_P
=
\left\langle
\{P(x),y\},
\{x,y\},
\{x,P(y)\}
\right\rangle.
\end{equation}
The map $P$ is called an \textbf{affine Rota-Baxter operator} if, for all $x,y\in\mathfrak a$,
\begin{equation}\label{e9}
P(\{x,y\}_P)=\{P(x),P(y)\}.
\end{equation}
\end{defn}

Eq.~(\ref{e8}) uses only $P$, the bracket and the heap operation, and is independent of an origin. Its coordinate form is as follows.

\begin{prop}\label{p3}
Let $\mathfrak a=\mathfrak a(\mathfrak g;\kappa,\lambda,s)$ be a Lie affgebra and let $P:\mathfrak a\to\mathfrak a$ be an affine map. Set $p=P(0)\in\mathfrak g$ and $R(x)=P(x)-P(0)$ for $x\in\mathfrak g$. Then $R\in\operatorname{End}(\mathfrak g)$ is the linear part of $P$ and $P(x)=R(x)+p$. The descendant operation is given by
\begin{equation}\label{e10}
\{x,y\}_P
=
[x,y]_{R,-1}
+\kappa R(x)
+(\lambda R+\operatorname{ad}_p)(y-x)
+s+\kappa(p).
\end{equation}
Its bilinear part is $[-,-]_{R,-1}$ as defined in Eq.~(\ref{e7}). This formula holds for every linear map $R$.
\end{prop}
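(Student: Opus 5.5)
We compute each of the three bracket values in Eq.~(\ref{e8}) with Eq.~(\ref{e2}), substituting $P(x)=R(x)+p$. Since the heap operation is $\langle a,b,c\rangle=a-b+c$ in the vector space $\mathfrak g$, we then take the alternating sum. The preliminary claims are immediate: $R$ is linear by the computation preceding the proposition, and $P=R+p$ by definition of $p$ and $R$.

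First we expand the three values. By Eq.~(\ref{e2}),
\begin{align*}
\{P(x),y\}&=[R(x)+p,y]+\kappa R(x)+\kappa(p)+\lambda(y-R(x)-p)+s,\\
\{x,y\}&=[x,y]+\kappa(x)+\lambda(y-x)+s,\\
\{x,P(y)\}&=[x,R(y)+p]+\kappa(x)+\lambda(R(y)+p-x)+s.
\end{align*}
Next we group terms by type in $\{P(x),y\}-\{x,y\}+\{x,P(y)\}$.

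\emph{Bilinear terms.} These give $[R(x),y]-[x,y]+[x,R(y)]=[x,y]_{R,-1}$ by Eq.~(\ref{e7}).

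\emph{Terms involving $p$ inside the Lie bracket.} These give $[p,y]+[x,p]=\operatorname{ad}_p(y-x)$.

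\emph{$\kappa$-terms.} These give $\kappa R(x)+\kappa(p)-\kappa(x)+\kappa(x)=\kappa R(x)+\kappa(p)$.

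\emph{$\lambda$-terms.} These give
$$
\lambda(y-R(x)-p)-\lambda(y-x)+\lambda(R(y)+p-x)=\lambda R(y)-\lambda R(x)=\lambda R(y-x),
$$
because the $p$-contributions cancel and the $\lambda(y)$ and $\lambda(x)$ terms cancel.

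\emph{Constants.} These give $s-s+s=s$.

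Summing the five groups yields exactly Eq.~(\ref{e10}). The only place requiring care is the sign bookkeeping in the $\lambda$-terms and the $\operatorname{ad}_p$ terms; there is no genuine obstacle.

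For the final two assertions, the only term of Eq.~(\ref{e10}) that is bilinear is $[x,y]_{R,-1}$. The remaining terms are affine in $(x,y)$ jointly: they are linear in one variable or constant. Hence $[-,-]_{R,-1}$ is the bilinear part of the descendant operation. Finally, the computation never used the Rota-Baxter identity, nor any property of $R$ beyond linearity, nor Eq.~(\ref{e1}). The formula therefore holds for every linear map $R$.
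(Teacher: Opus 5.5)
Your proposal is correct and follows the paper's own proof essentially line by line. You expand the three bracket values via Eq.~(\ref{e2}) with $P=R+p$, take the alternating sum $a-b+c$, and collect the bilinear, $\operatorname{ad}_p$, $\kappa$, $\lambda$ and constant terms; your closing remarks on the bilinear part and on arbitrary linear $R$ are accurate and make explicit what the paper leaves implicit.
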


\begin{proof}
Using Eq.~(\ref{e2}) and $P(x)=R(x)+p$, we have
\begin{align*}
\{P(x),y\}
&=[R(x),y]+[p,y]+\kappa R(x)+\kappa(p)+\lambda(y)-\lambda R(x)-\lambda(p)+s,\\
\{x,y\}
&=[x,y]+\kappa(x)+\lambda(y)-\lambda(x)+s,\\
\{x,P(y)\}
&=[x,R(y)]+[x,p]+\kappa(x)+\lambda R(y)+\lambda(p)-\lambda(x)+s.
\end{align*}
In the chosen coordinates, the heap operation in Eq.~(\ref{e8}) is $a-b+c$. The terms involving $\kappa$, $\lambda$ and $s$ therefore combine as follows:
\begin{align*}
&\kappa(R(x)+p)-\kappa(x)+\kappa(x)=\kappa R(x)+\kappa(p),\\
&\lambda(y-R(x)-p)-\lambda(y-x)+\lambda(R(y)+p-x)=\lambda R(y-x),\\
&s-s+s=s.
\end{align*}
Moreover, $[p,y]+[x,p]=[p,y-x]=\operatorname{ad}_p(y-x)$. Hence Eq.~(\ref{e7}) gives
\begin{align*}
\{x,y\}_P
&=\{P(x),y\}-\{x,y\}+\{x,P(y)\}\\
&=[R(x),y]+[x,R(y)]-[x,y]+[p,y]+[x,p]\\
&\quad+\kappa R(x)+\kappa(p)+\lambda R(y-x)+s\\
&=[x,y]_{R,-1}+\kappa R(x)+(\lambda R+\operatorname{ad}_p)(y-x)+s+\kappa(p),
\end{align*}
which is Eq.~(\ref{e10}).
\end{proof}

At weight $-1$, the affine Rota-Baxter identity also implies the affine Jacobi identity.

\begin{theorem}\label{t1}
Let $\mathfrak a$ be a Lie affgebra and let $P:\mathfrak a\to\mathfrak a$ be an affine map. If $P$ satisfies Eq.~(\ref{e9}), then $\{-,-\}_P$ is a Lie affgebra bracket and $P:(\mathfrak a,\{-,-\}_P)\to(\mathfrak a,\{-,-\})$ is a Lie affgebra homomorphism. Moreover, $P$ is a tangent Rota-Baxter operator of weight $-1$.
\end{theorem}

\begin{proof}
Choose an origin and write $\mathfrak a=\mathfrak a(\mathfrak g;\kappa,\lambda,s)$. Set $p=P(0)$ and $R(x)=P(x)-P(0)$, so that $R$ is linear and $P(x)=R(x)+p$. We first derive the consequences of Eq.~(\ref{e9}). By Proposition~\ref{p3}, we have
\begin{align*}
P(\{x,y\}_P)
&=R([x,y]_{R,-1})+R\kappa R(x)
+R(\lambda R+\operatorname{ad}_p)(y-x)\\
&\quad+R(s+\kappa(p))+p
\quad\text{(by Eq.~(\ref{e10}))},\\
\{P(x),P(y)\}
&=[R(x)+p,R(y)+p]+\kappa(R(x)+p)
+\lambda(R(y)-R(x))+s\\
&=[R(x),R(y)]+\kappa R(x)
+(\lambda+\operatorname{ad}_p)R(y-x)+s+\kappa(p).
\end{align*}
Eq.~(\ref{e9}) equates these two expressions. Evaluating at $x=y=0$ gives the constant identity below. After subtracting the constant terms, setting $y=x$ gives the identity involving $\kappa$, and setting $x=0$ gives the identity involving $\lambda$. Removing these linear terms leaves the bilinear identity. Thus
\begin{equation}\label{e11}
\begin{gathered}
R([x,y]_{R,-1})=[R(x),R(y)],\quad R\kappa R=\kappa R,\\
R(\lambda R+\operatorname{ad}_p)=(\lambda+\operatorname{ad}_p)R,\ \ 
R(s+\kappa(p))=s-p+\kappa(p).
\end{gathered}
\end{equation}
The first equality is the Rota-Baxter identity of weight $-1$. Consequently, $[-,-]_{R,-1}$ is a Lie bracket and $R:(\mathfrak g,[-,-]_{R,-1})\to(\mathfrak g,[-,-])$ is a Lie algebra homomorphism.

We next verify the structural identity for $\kappa R$ and $\lambda R+\operatorname{ad}_p$. By bilinearity, we have
\begin{align*}
[x-R(x),y-R(y)]
&=[x,y]-[x,R(y)]-[R(x),y]+[R(x),R(y)]\\
&=[R(x),R(y)]-\bigl([R(x),y]+[x,R(y)]-[x,y]\bigr)\\
&=[R(x),R(y)]-[x,y]_{R,-1}
\quad\text{(by Eq.~(\ref{e7}))}.
\end{align*}
Hence
\begin{equation}\label{e12}
[x,y]_{R,-1}=[R(x),R(y)]-[x-R(x),y-R(y)].
\end{equation}
It follows that
\begin{equation}\label{e13}
\begin{aligned}
(\operatorname{id}-R)([x,y]_{R,-1})
&=[x,y]_{R,-1}-[R(x),R(y)]
&&\text{(by Eq.~(\ref{e11}))}\\
&=-[x-R(x),y-R(y)]
&&\text{(by Eq.~(\ref{e12}))}.
\end{aligned}
\end{equation}
The linear identities in Eq.~(\ref{e11}) also give
\begin{equation}\label{e14}
\begin{aligned}
(\operatorname{id}-R)\kappa R
&=\kappa R-R\kappa R=0,\\
(\operatorname{id}-R)(\lambda R+\operatorname{ad}_p)
&=\lambda R+\operatorname{ad}_p-(\lambda+\operatorname{ad}_p)R
=\operatorname{ad}_p(\operatorname{id}-R).
\end{aligned}
\end{equation}
Using Eqs.~(\ref{e11})-(\ref{e14}) and $\operatorname{ad}_p(x)=[p,x]$, we obtain
\begin{align*}
&\quad (\lambda R+\operatorname{ad}_p)([x,y]_{R,-1})
-[(\lambda R+\operatorname{ad}_p)(x),y]_{R,-1}
+[x,\kappa R(y)]_{R,-1}
-[x,(\lambda R+\operatorname{ad}_p)(y)]_{R,-1}\\
&=\lambda([R(x),R(y)])-[\lambda R(x),R(y)]
+[R(x),\kappa R(y)]-[R(x),\lambda R(y)]\\
&\quad+[p,[R(x),R(y)]]-[[p,R(x)],R(y)]-[R(x),[p,R(y)]]\\
&\quad-[p,[x-R(x),y-R(y)]]
+[[p,x-R(x)],y-R(y)]\\
&\quad+[x-R(x),[p,y-R(y)]]\\
&=0\quad\text{(by Eq.~(\ref{e1}) and the Jacobi identity)}.
\end{align*}
In the last step, Eq.~(\ref{e1}) makes the first line on the right zero, while the next line and the final two lines vanish separately by the Jacobi identity. Therefore
\begin{align*}
(\lambda R+\operatorname{ad}_p)([x,y]_{R,-1})
=[(\lambda R+\operatorname{ad}_p)(x),y]_{R,-1}
-[x,\kappa R(y)]_{R,-1}
+[x,(\lambda R+\operatorname{ad}_p)(y)]_{R,-1}.
\end{align*}
This is the condition in Proposition~\ref{tstr} for the new bracket and endomorphisms.

We now verify the two affine identities directly. Eq.~(\ref{e10}) is biaffine and gives $\{x,x\}_P=\kappa R(x)+s+\kappa(p)$. Thus
\begin{align*}
\{x,y\}_P+\{y,x\}_P
&=[x,y]_{R,-1}+[y,x]_{R,-1}+\kappa R(x+y)
+(\lambda R+\operatorname{ad}_p)(y-x+x-y)+2s+2\kappa(p)\\
&=\kappa R(x)+\kappa R(y)+2s+2\kappa(p)\\
&=\{x,x\}_P+\{y,y\}_P,
\end{align*}
which is affine antisymmetry. Setting $y=x$ in the structural identity above yields
\begin{align*}
0&=[(\lambda R+\operatorname{ad}_p)(x),x]_{R,-1}
-[x,\kappa R(x)]_{R,-1}
+[x,(\lambda R+\operatorname{ad}_p)(x)]_{R,-1}\\
&=-[x,\kappa R(x)]_{R,-1}
\quad\text{(by skew symmetry)}.
\end{align*}

For the affine Jacobi identity, Eq.~(\ref{e10}) gives
\begin{align*}
\{y,z\}_P-\{x,x\}_P
&=[y,z]_{R,-1}+\kappa R(y-x)
+(\lambda R+\operatorname{ad}_p)(z-y).
\end{align*}
The constant $s+\kappa(p)$ cancels in this difference. Applying Eq.~(\ref{e10}) again, we have
\begin{align*}
\{x,\{y,z\}_P\}_P-\{x,\{x,x\}_P\}_P
&=[x,\{y,z\}_P-\{x,x\}_P]_{R,-1}
+(\lambda R+\operatorname{ad}_p)(\{y,z\}_P-\{x,x\}_P)\\
&=[x,[y,z]_{R,-1}]_{R,-1}
+[x,\kappa R(y-x)]_{R,-1}
+[x,(\lambda R+\operatorname{ad}_p)(z-y)]_{R,-1}\\
&\quad+(\lambda R+\operatorname{ad}_p)([y,z]_{R,-1})
+(\lambda R+\operatorname{ad}_p)\kappa R(y-x)
+(\lambda R+\operatorname{ad}_p)^2(z-y).
\end{align*}
Let $\sum_{\mathrm{cyc}}$ denote summation over $(x,y,z)$, $(y,z,x)$ and $(z,x,y)$. The last two terms cancel in this sum because
$$
(y-x)+(z-y)+(x-z)=0,\quad (z-y)+(x-z)+(y-x)=0.
$$
The nested bracket terms sum to zero by the Jacobi identity for $[-,-]_{R,-1}$. Hence
\begin{align*}
&\quad \sum_{\mathrm{cyc}}\bigl(\{x,\{y,z\}_P\}_P-\{x,\{x,x\}_P\}_P\bigr)\\
&=\sum_{\mathrm{cyc}}\Bigl(
[x,\kappa R(y-x)]_{R,-1}
+[x,(\lambda R+\operatorname{ad}_p)(z-y)]_{R,-1}
+(\lambda R+\operatorname{ad}_p)([y,z]_{R,-1})\Bigr)\\
&=\sum_{\mathrm{cyc}}\Bigl(
(\lambda R+\operatorname{ad}_p)([y,z]_{R,-1})
-[(\lambda R+\operatorname{ad}_p)(y),z]_{R,-1}\\
&\quad+[y,\kappa R(z)]_{R,-1}
-[y,(\lambda R+\operatorname{ad}_p)(z)]_{R,-1}\Bigr)
-\sum_{\mathrm{cyc}}[x,\kappa R(x)]_{R,-1}\\
&=0\quad\text{(by the structural identity above)}.
\end{align*}
The second equality uses skew symmetry and cyclic reindexing. The last equality uses the structural identity at $y,z$ and $[x,\kappa R(x)]_{R,-1}=0$. This proves the affine Jacobi identity and therefore shows that $\{-,-\}_P$ is a Lie affgebra bracket.

Finally, Eq.~(\ref{e9}) states exactly that $P$ preserves this bracket. The chosen origin was arbitrary, and the first equality in Eq.~(\ref{e11}) proves the assertion about the linear part of $P$ at every origin.
\end{proof}

The affine Rota-Baxter identity is therefore equivalent to the following coordinate conditions.

\begin{coro}\label{c2}
Let $\mathfrak a=\mathfrak a(\mathfrak g;\kappa,\lambda,s)$ be a Lie affgebra and let $P:\mathfrak a\to\mathfrak a$ be an affine map. Let $R\in\operatorname{End}(\mathfrak g)$ be its linear part and put $p=P(0)\in\mathfrak g$, so that $P(x)=R(x)+p$. Then $P$ is an affine Rota-Baxter operator if and only if the following identities hold for all $x,y\in\mathfrak g$:
\begin{align}
[R(x),R(y)]&=R([x,y]_{R,-1}),\quad R\kappa R=\kappa R,\label{e15}\\
R(\lambda R+\operatorname{ad}_p)&=(\lambda+\operatorname{ad}_p)R,\quad
R(s+\kappa(p))=s-p+\kappa(p).\label{e17}
\end{align}
\end{coro}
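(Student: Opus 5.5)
The plan is to compare both sides of Eq.~(\ref{e9}) in the coordinates $\mathfrak a=\mathfrak a(\mathfrak g;\kappa,\lambda,s)$, $P=R+p$, and split the resulting equation into its bilinear, linear and constant parts. The expansions were already obtained in the proof of Theorem~\ref{t1}. By Proposition~\ref{p3}, the left-hand side is
$$
P(\{x,y\}_P)=R([x,y]_{R,-1})+R\kappa R(x)+R(\lambda R+\operatorname{ad}_p)(y-x)+R(s+\kappa(p))+p .
$$
By Eq.~(\ref{e2}), the right-hand side is
$$
\{P(x),P(y)\}=[R(x),R(y)]+\kappa R(x)+(\lambda+\operatorname{ad}_p)R(y-x)+s+\kappa(p).
$$
Since the corollary asserts an equivalence, we need both directions.

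For the forward direction, assume Eq.~(\ref{e9}) for all $x,y$ and define
$$
F(x,y)=P(\{x,y\}_P)-\{P(x),P(y)\}\equiv 0 .
$$
Then $F$ is the sum of a bilinear form, a term linear in $x$, a term linear in $y-x$, and a constant.
\begin{enumerate}[label=(\roman*)]
\item Evaluating at $x=y=0$ leaves only the constant. This gives $R(s+\kappa(p))+p=s+\kappa(p)$, which is the second identity in Eq.~(\ref{e17}).
\item Subtracting the constant and setting $x=0$ removes the bilinear term and the $x$-linear term. What remains is $R(\lambda R+\operatorname{ad}_p)(y)=(\lambda+\operatorname{ad}_p)R(y)$, the first identity in Eq.~(\ref{e17}).
\item Setting $y=x$ removes the $(y-x)$-terms. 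Since $[x,x]_{R,-1}=0=[R(x),R(x)]$, this leaves $R\kappa R(x)=\kappa R(x)$.
\item Subtracting all linear and constant parts leaves the bilinear Rota-Baxter identity in Eq.~(\ref{e15}).
\end{enumerate}
This is exactly the list in Eq.~(\ref{e11}), so this direction is already contained in Theorem~\ref{t1}.

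For the converse, assume Eqs.~(\ref{e15})--(\ref{e17}). Substitute each of them term by term into the displayed expression for $P(\{x,y\}_P)$; it then becomes literally the displayed expression for $\{P(x),P(y)\}$, which gives Eq.~(\ref{e9}).

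The only point needing care is the separation step in the forward direction. We must justify that a map of the form
$$
B(x,y)+L_1(x)+L_2(y-x)+c
$$
vanishing identically forces each of $B$, $L_1$, $L_2$, $c$ to vanish. This follows by the successive specializations above, using that $B$ is bilinear and alternating on the diagonal; both $[-,-]_{R,-1}$ and $[R(-),R(-)]$ have this property. No characteristic assumption is needed beyond linearity.
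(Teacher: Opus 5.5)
Your proposal is correct and is essentially the paper's own proof: expand both sides of Eq.~(\ref{e9}) using Proposition~\ref{p3} and Eq.~(\ref{e2}), separate the constant, linear and bilinear parts by the specializations $x=y=0$, $x=0$ and $y=x$, and for the converse substitute the four identities back into the expansion. Your explicit remark that the bilinear part vanishes on the diagonal, which is needed at the $y=x$ step, is a detail the paper leaves implicit.
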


\begin{proof}
By Proposition~\ref{p3}, comparing the bilinear, linear and constant parts of Eq.~(\ref{e9}) gives Eqs.~(\ref{e15}) and~(\ref{e17}). Conversely, substituting these four equations into Eq.~(\ref{e10}) gives Eq.~(\ref{e9}). The descendant is a Lie affgebra by Theorem~\ref{t1}.
\end{proof}

\subsection{Idempotent fixed points}\label{u11}

The definition of an affine Rota-Baxter operator requires no preferred point. Factorization, however, uses a point fixed by $P$ and idempotent for the bracket. The obstruction to such a point is described below.

Choose an arbitrary origin and write $\mathfrak a=\mathfrak a(\mathfrak g;\kappa,\lambda,s)$ and $P(x)=R(x)+p$.
If the origin is moved from $0$ to a point $a\in\mathfrak g$, then the same Lie affgebra and affine map are described by new coefficients $\kappa_a,\lambda_a,s_a,R_a,p_a$.

\begin{lemma}\label{l2}
Under the canonical translation identification of the tangent spaces, we have
$$
\kappa_a=\kappa,\quad R_a=R, \quad \lambda_a=\lambda+\operatorname{ad}_a,\quad s_a=s-(\operatorname{id}-\kappa)a,\quad p_a=p-(\operatorname{id}-R)a.
$$
\end{lemma}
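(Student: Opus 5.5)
The approach is to compute everything explicitly in coordinates. The new origin $a$ gives the tangent space $T_a\mathfrak a$, and we identify it with $\mathfrak g$ via the translation $\tau_0^a$. Concretely, a point $x\in\mathfrak g$ corresponds to the vector $x-a$. So in the new coordinates a point is written $u=x-a$ with $u\in\mathfrak g$, where the original bracket and vector operations of $\mathfrak g$ are still used, since translation identifies the tangent Lie algebras by Proposition~\ref{p15}. We then write the bracket and the map $P$ as functions of $u,v$ and read off the new coefficients using the recovery formulas stated after Proposition~\ref{tstr}: $s_a=\{a,a\}-a$, $\kappa_a(u)=\{u+a,u+a\}-\{a,a\}$, and $\lambda_a(u)=\{a,u+a\}-\{a,a\}$. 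Here each expression is converted back to the new coordinates by subtracting $a$.

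The key steps run as follows.

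\begin{enumerate}[label={\rm(\arabic*)}]
\item Expand $\{u+a,v+a\}-a$ using Eq.~(\ref{e2}):
$$
[u,v]+[u,a]+[a,v]+\kappa(u)+\kappa(a)+\lambda(v-u)+s-a.
$$
Since $[u,a]+[a,v]=\operatorname{ad}_a(v-u)$, this equals
$$
[u,v]+\kappa(u)+(\lambda+\operatorname{ad}_a)(v-u)+s-(\operatorname{id}-\kappa)a.
$$
Matching this with the form of Eq.~(\ref{e2}) gives the unchanged bilinear part together with $\kappa_a=\kappa$, $\lambda_a=\lambda+\operatorname{ad}_a$ and $s_a=s-(\operatorname{id}-\kappa)a$. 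The matching is unique because the constant term is the value at $u=v=0$, $\kappa_a$ is read off at $v=u$, and $\lambda_a$ is read off at $u=0$.
\item For $P$, compute $P(u+a)-a=R(u)+R(a)+p-a=R(u)+p-(\operatorname{id}-R)a$. The decomposition $P=R+p$ is unique, so $R_a=R$ and $p_a=p-(\operatorname{id}-R)a$.
\end{enumerate}

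There is no real obstacle; the only care needed is with the identification convention. We must check that the tangent bracket at $a$ from Proposition~\ref{ttan}, transported by $\tau_a^0$, is the original $[-,-]$. This follows from step (1), since the bilinear part of the transported bracket is $[u,v]$. It is this check that justifies calling the new data "the same" $\mathfrak g$ with coefficients $\kappa_a$ and so on.
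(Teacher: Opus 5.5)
Your proof is correct and is essentially the paper's: you expand $\{a+u,a+v\}-a$ using Eq.~(\ref{e2}) to read off $\kappa_a,\lambda_a,s_a$, and expand $P(a+u)-a=R(u)+p-(\operatorname{id}-R)a$ to read off $R_a,p_a$. Your remark that the bilinear part remains $[u,v]$, so that the translation identifies the tangent Lie algebras, makes explicit a point the paper leaves implicit. That remark is what justifies the identification itself; Proposition~\ref{p15} does not.
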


\begin{proof}
In the coordinates with new origin $a$, a point represented by $x\in\mathfrak g$ is the old point $a+x$. The new coordinate of the bracket value is therefore $\{a+x,a+y\}-a$. Expanding the standard form gives
\begin{align*}
\{a+x,a+y\}-a
&=[a+x,a+y]+\kappa(a+x)+\lambda(y-x)+s-a\\
&=[x,y]+[a,y-x]+\kappa(x)+\lambda(y-x)+\kappa(a)+s-a\\
&=[x,y]+\kappa(x)+(\lambda+\operatorname{ad}_a)(y-x)
+s-(\operatorname{id}-\kappa)a.
\end{align*}
This proves the formulas for $\kappa_a,\lambda_a$ and $s_a$. Likewise, $P(a+x)-a=R(x)+R(a)+p-a=R(x)+p-(\operatorname{id}-R)a$, which proves the formulas for $R_a$ and $p_a$.
\end{proof}

\begin{defn}\label{d4}
Let $P:\mathfrak a\to\mathfrak a$ be an affine map on a Lie affgebra $\mathfrak a$. A point $e\in\mathfrak a$ is an \textbf{idempotent fixed point of $P$} if $\{e,e\}=e$ and $P(e)=e$.
\end{defn}

Define the linear map
\begin{equation}\label{e36}
F_{\kappa,R}:\mathfrak g\longrightarrow\mathfrak g\oplus\mathfrak g,
\quad
F_{\kappa,R}(x)
=
\big((\operatorname{id}-\kappa)x,(\operatorname{id}-R)x\big).
\end{equation}

\begin{theorem}\label{t4}
The class
\begin{equation}\label{e37}
\Omega(\mathfrak a,P)
=
[(s,p)]
\in
\operatorname{coker}F_{\kappa,R}
=
(\mathfrak g\oplus\mathfrak g)/\operatorname{Im}F_{\kappa,R}
\end{equation}
is independent of the chosen origin, under the canonical translation identifications of the tangent spaces. Moreover, the following are equivalent:
\begin{enumerate}[label={\rm(\roman*)}]
\item $\Omega(\mathfrak a,P)=0$.
\item $P$ has an idempotent fixed point.
\item there exists $e\in\mathfrak g$ such that
\begin{equation}\label{e38}
(\operatorname{id}-\kappa)e=s,
\quad
(\operatorname{id}-R)e=p.
\end{equation}
\end{enumerate}
If these conditions hold and $e_0$ is one idempotent fixed point, then the set of all idempotent fixed points is
\begin{equation}\label{e39}
e_0+
\big(
\ker(\operatorname{id}-\kappa)
\cap
\ker(\operatorname{id}-R)
\big).
\end{equation}
In particular, it is an affine space with translation space $\ker(\operatorname{id}-\kappa)\cap\ker(\operatorname{id}-R)$.
\end{theorem}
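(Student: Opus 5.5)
The whole argument runs through Lemma~\ref{l2}, which records how the data change under a change of origin. For origin independence, move the origin from $0$ to $a\in\mathfrak g$. The lemma gives $\kappa_a=\kappa$ and $R_a=R$, so the map $F_{\kappa,R}$ and its cokernel are unchanged under the canonical translation identification. It also gives
$$
(s_a,p_a)=(s,p)-\big((\operatorname{id}-\kappa)a,(\operatorname{id}-R)a\big)=(s,p)-F_{\kappa,R}(a).
$$
Hence $[(s_a,p_a)]=[(s,p)]$ in $\operatorname{coker}F_{\kappa,R}$. The only care needed is to confirm that the identifications of tangent spaces at different origins are exactly the translations $\tau_0^a$. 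Under these, the linear parts $\kappa,R$ correspond as stated in Lemma~\ref{l2}, so the cokernels at different origins are canonically identified.

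For the equivalences, (i)$\Leftrightarrow$(iii) is the definition of the cokernel: $(s,p)\in\operatorname{Im}F_{\kappa,R}$ exactly when some $e$ satisfies Eq.~(\ref{e38}). For (ii)$\Leftrightarrow$(iii), compute in coordinates. By Eq.~(\ref{e2}), $\{e,e\}=[e,e]+\kappa(e)+\lambda(0)+s=\kappa(e)+s$, so $\{e,e\}=e$ is equivalent to $(\operatorname{id}-\kappa)e=s$. Also $P(e)=R(e)+p$, so $P(e)=e$ is equivalent to $(\operatorname{id}-R)e=p$. Thus the idempotent fixed points are precisely the solutions of Eq.~(\ref{e38}).

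The solution set of the inhomogeneous linear system $F_{\kappa,R}(e)=(s,p)$ is a coset $e_0+\ker F_{\kappa,R}$, and
$$
\ker F_{\kappa,R}=\ker(\operatorname{id}-\kappa)\cap\ker(\operatorname{id}-R).
$$
This gives Eq.~(\ref{e39}) and the statement about the translation space. An affine subspace is nonempty by definition, and here it is nonempty because of $e_0$. None of these steps is a real obstacle; the only subtle point is the bookkeeping of identifications in the independence claim, which Lemma~\ref{l2} settles.
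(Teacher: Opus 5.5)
Your proposal is correct and follows essentially the same route as the paper. You get origin independence from Lemma~\ref{l2} via $(s_a,p_a)=(s,p)-F_{\kappa,R}(a)$, the equivalences from $\{e,e\}=\kappa(e)+s$ and $P(e)=R(e)+p$, and the set of idempotent fixed points as the coset $e_0+\ker F_{\kappa,R}$; the only difference is that you prove (i)$\Leftrightarrow$(iii) and (ii)$\Leftrightarrow$(iii) separately rather than as a cycle of implications.
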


\begin{proof}
Under a change of origin by $a$, Lemma~\ref{l2} identifies $\kappa_a$ with $\kappa$ and $R_a$ with $R$, so the linear map $F_{\kappa,R}$ is unchanged under the stated identifications. The constant terms satisfy
\begin{align*}
(s_a,p_a)
&=\bigl(s-(\operatorname{id}-\kappa)a,\ p-(\operatorname{id}-R)a\bigr)
\quad\text{(by Lemma~\ref{l2})}\\
&=(s,p)-F_{\kappa,R}(a)
\quad\text{(by Eq.~(\ref{e36}))}.
\end{align*}
Consequently, $[(s_a,p_a)]=[(s,p)]$ in $\operatorname{coker}F_{\kappa,R}$. This proves independence of the origin.

Suppose that (i) holds. By the definition of the quotient in Eq.~(\ref{e37}), the equality $[(s,p)]=0$ means that $(s,p)$ belongs to $\operatorname{Im}F_{\kappa,R}$. Thus there is $e\in\mathfrak g$ with $F_{\kappa,R}(e)=(s,p)$, which is exactly (iii). These equations give
$$
\{e,e\}=[e,e]+\kappa(e)+\lambda(e-e)+s =\kappa(e)+s=e, \quad P(e)=R(e)+p=e.
$$
Hence $e$ is an idempotent fixed point of $P$ and (ii) holds.

Conversely, if (ii) holds, let $e$ be an idempotent fixed point. The equations $\{e,e\}=e$ and $P(e)=e$ give $(\operatorname{id}-\kappa)e=s$ and $(\operatorname{id}-R)e=p$, respectively. Thus $F_{\kappa,R}(e)=(s,p)$, proving (iii), and $(s,p)\in\operatorname{Im}F_{\kappa,R}$, proving (i).

Finally, if $e_0$ is one solution, linearity gives
$$
F_{\kappa,R}(e)=(s,p) \Longleftrightarrow F_{\kappa,R}(e-e_0)=0 \Longleftrightarrow e-e_0\in \ker(\operatorname{id}-\kappa)\cap\ker(\operatorname{id}-R).
$$
This proves Eq.~(\ref{e39}) and identifies the translation space of the set of idempotent fixed points.
\end{proof}

\begin{remark}\label{r1}
Theorem~\ref{t4} applies to every affine map $P$. Neither the Rota-Baxter identity nor idempotency is required. Once one idempotent fixed point exists, all others are obtained by adding vectors fixed by both $\kappa$ and $R$, as in Eq.~(\ref{e39}).
\end{remark}

The obstruction is also preserved by isomorphisms that intertwine the affine maps.

\begin{prop}\label{p6}
Let $\mathfrak a$ and $\mathfrak a'$ be Lie affgebras with affine maps $P:\mathfrak a\to\mathfrak a$ and $P':\mathfrak a'\to\mathfrak a'$, respectively, and let $\varphi:\mathfrak a\to\mathfrak a'$ be a Lie affgebra isomorphism satisfying $\varphi P=P'\varphi$. Choose origins and use the notation of Proposition~\ref{thom}. Write $P(x)=R(x)+p$, $P'(x)=R'(x)+p'$ and $\varphi(x)=\psi(x)+q$. Then $\psi$ induces an isomorphism
$$
\overline{\psi\oplus\psi}:
\operatorname{coker}F_{\kappa,R}
\longrightarrow
\operatorname{coker}F_{\kappa',R'}
$$
which sends $\Omega(\mathfrak a,P)$ to $\Omega(\mathfrak a',P')$.
\end{prop}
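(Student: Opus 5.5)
The plan is to show that $\psi\oplus\psi$ carries $\operatorname{Im}F_{\kappa,R}$ into $\operatorname{Im}F_{\kappa',R'}$. It then descends to the cokernels, and I will check that it sends $(s,p)$ to $(s',p')$ modulo that image. Since $\varphi$ is an isomorphism, $\psi$ is a linear bijection. For the inverse I would either run the same argument with $\varphi^{-1}$, whose linear part is $\psi^{-1}$, or observe that $\psi\oplus\psi$ maps $\operatorname{Im}F_{\kappa,R}$ onto $\operatorname{Im}F_{\kappa',R'}$ and is bijective.

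First, I record the intertwining relations. By Proposition~\ref{thom}, $\psi\kappa=\kappa'\psi$ and $\psi(s)=s'-q+\kappa'(q)=s'-(\operatorname{id}-\kappa')q$. Next, I expand $\varphi P=P'\varphi$ in coordinates:
$$
\psi R(x)+\psi(p)+q=R'\psi(x)+R'(q)+p'.
$$
Comparing the linear parts gives $\psi R=R'\psi$. Comparing the constant parts gives $\psi(p)=p'-(\operatorname{id}-R')q$.

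These relations give
$$
(\psi\oplus\psi)F_{\kappa,R}=F_{\kappa',R'}\psi,
$$
so the image of $F_{\kappa,R}$ is sent onto the image of $F_{\kappa',R'}$, because $\psi$ is surjective. Hence $\psi\oplus\psi$ induces a well-defined map $\overline{\psi\oplus\psi}$ on the cokernels. This map is surjective. It is also injective: if $(\psi\oplus\psi)(u,v)=F_{\kappa',R'}(y)$, then writing $y=\psi(x)$ gives $(\psi\oplus\psi)(u,v)=(\psi\oplus\psi)F_{\kappa,R}(x)$, and therefore $(u,v)=F_{\kappa,R}(x)$, since $\psi\oplus\psi$ is injective.

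Finally, the constant relations give
$$
(\psi\oplus\psi)(s,p)=(s',p')-F_{\kappa',R'}(q),
$$
so the class of $(s,p)$ maps to the class of $(s',p')$, that is, $\Omega(\mathfrak a,P)\mapsto\Omega(\mathfrak a',P')$.

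The only point needing care is the coordinate expansion of $\varphi P=P'\varphi$ and its splitting into linear and constant parts, which relies on the uniqueness of the decomposition $P=R+p$. Everything else is formal. Origin-independence, from Theorem~\ref{t4}, ensures that the statement does not depend on the chosen origins.
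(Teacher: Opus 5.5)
Your proposal is correct and follows essentially the same route as the paper. You derive the intertwining relations from Eq.~(\ref{e3}) and $\varphi P=P'\varphi$, deduce $(\psi\oplus\psi)F_{\kappa,R}=F_{\kappa',R'}\psi$, and conclude with $(\psi\oplus\psi)(s,p)=(s',p')-F_{\kappa',R'}(q)$. You also check explicitly that the induced map on cokernels is bijective, which the paper leaves implicit.
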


\begin{proof}
The homomorphism conditions in Eq.~(\ref{e3}) and the relation $\varphi P=P'\varphi$ give
$$
\psi\kappa=\kappa'\psi,\quad \psi R=R'\psi, \quad \psi s=s'-(\operatorname{id}-\kappa')q,\quad \psi p=p'-(\operatorname{id}-R')q.
$$
It follows that $(\psi\oplus\psi)F_{\kappa,R}(x)=F_{\kappa',R'}(\psi x)$, so $\psi\oplus\psi$ descends to the stated map on cokernels. Moreover, $(\psi\oplus\psi)(s,p)=(s',p')-F_{\kappa',R'}(q)$, which proves that the obstruction classes correspond.
\end{proof}

Taking only the first component gives an obstruction intrinsic to the Lie affgebra itself.

\begin{coro}\label{c6}
The class $\omega(\mathfrak a)=[s]\in\operatorname{coker}(\operatorname{id}-\kappa)$ is independent of the chosen origin. It vanishes if and only if $\mathfrak a$ has an idempotent point. If nonempty, the set of idempotent points is an affine space with translation space $\ker(\operatorname{id}-\kappa)$.
\end{coro}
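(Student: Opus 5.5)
The plan is to specialise the argument of Theorem~\ref{t4} to the first coordinate alone, or equivalently to apply that theorem to a convenient affine map. The most economical route is to take $P=\operatorname{id}_{\mathfrak a}$. In coordinates its linear part is $R=\operatorname{id}$ and $p=0$, so $\operatorname{id}-R=0$. Hence $F_{\kappa,\operatorname{id}}(x)=((\operatorname{id}-\kappa)x,0)$ and
$$
\operatorname{coker}F_{\kappa,\operatorname{id}}=\operatorname{coker}(\operatorname{id}-\kappa)\oplus\mathfrak g .
$$
Under this identification, $\Omega(\mathfrak a,\operatorname{id})=([s],0)$. Every point is fixed by the identity, so idempotent fixed points of $\operatorname{id}$ are exactly the idempotent points of $\mathfrak a$.

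With this choice, each assertion reads off from Theorem~\ref{t4}.

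\textbf{Independence of the origin.} Under a change of origin, Lemma~\ref{l2} gives $\kappa_a=\kappa$ and $s_a=s-(\operatorname{id}-\kappa)a$. Since $(\operatorname{id}-\kappa)a$ lies in $\operatorname{Im}(\operatorname{id}-\kappa)$, we get $[s_a]=[s]$ in $\operatorname{coker}(\operatorname{id}-\kappa)$. I would state this one-line check directly rather than extracting it from the product decomposition, since it is cleaner.

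\textbf{Vanishing criterion.} The condition $\{e,e\}=e$ reads $\kappa(e)+s=e$, that is, $(\operatorname{id}-\kappa)e=s$. Such an $e$ exists if and only if $s\in\operatorname{Im}(\operatorname{id}-\kappa)$, which is the statement $\omega(\mathfrak a)=0$.

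\textbf{Structure of the solution set.} If $e_0$ is one solution, then by linearity the solutions are exactly $e_0+\ker(\operatorname{id}-\kappa)$. This agrees with Eq.~(\ref{e39}) when $R=\operatorname{id}$, because then $\ker(\operatorname{id}-R)=\mathfrak g$.

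The only point needing care is the phrase ``under the canonical translation identifications''. The quotient $\operatorname{coker}(\operatorname{id}-\kappa)$ is computed in $T_0\mathfrak a$ and in $T_a\mathfrak a$, and these are identified via $\tau_0^a$, which is exactly the identification used in Lemma~\ref{l2}. There is no real obstacle here; the argument is a direct specialisation of Theorem~\ref{t4}.
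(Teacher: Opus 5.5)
Your proposal is correct and is essentially the paper's argument. The paper also uses Lemma~\ref{l2} to get $s_a=s-(\operatorname{id}-\kappa)a$ for origin independence, and obtains the remaining assertions by solving $(\operatorname{id}-\kappa)e=s$. The detour through $P=\operatorname{id}$ in Theorem~\ref{t4}, using $\operatorname{coker}F_{\kappa,\operatorname{id}}=\operatorname{coker}(\operatorname{id}-\kappa)\oplus\mathfrak g$, is valid but not needed, since your direct checks already prove every claim.
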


\begin{proof}
By Lemma~\ref{l2}, changing the origin replaces $s$ by $s-(\operatorname{id}-\kappa)a$, which leaves its cokernel class unchanged. The remaining assertions follow by solving $(\operatorname{id}-\kappa)e=s$.
\end{proof}

\begin{exam}\label{x2}
Let $\mathfrak g$ be a Lie algebra, take $s\in\mathfrak g\setminus\{0\}$, and set $\kappa=\lambda=\operatorname{id}_{\mathfrak g}$. Then $\{x,y\}=[x,y]+y+s$ defines a Lie affgebra. Since $\operatorname{id}-\kappa=0$, the class $\omega(\mathfrak a)$ is represented by the nonzero element $s$ and therefore does not vanish. Equivalently, $\{e,e\}=e+s\neq e$ for every point $e$. Even the identity affine Rota-Baxter operator on this Lie affgebra therefore admits no idempotent fixed point. This example shows that the existence of an idempotent fixed point must be checked separately.
\end{exam}

\begin{remark}\label{r2}
When $\Omega(\mathfrak a,P)=0$, choosing an idempotent fixed point $e$ as origin removes both constant terms: the Lie affgebra becomes $\mathfrak a(T_e\mathfrak a;\kappa_e,\lambda_e,0)$ and the affine map $P$ becomes linear. Conversely, any origin at which both constant terms vanish is an idempotent fixed point.
\end{remark}

\subsection{Weighted affine descendants}\label{u10}

Fix $\theta\in\mathbf{k}$ throughout this subsection. The coefficients of the three bracket terms in the ordinary weighted descendant sum to $2+\theta$. Subtracting each diagonal bracket value with coefficient $(\theta+1)/2$ makes this sum equal to one.

\begin{defn}\label{d2}
Let $(\mathfrak a,\{-,-\})$ be a Lie affgebra and let $P:\mathfrak a\to\mathfrak a$ be an affine map. For $x,y\in\mathfrak a$, define the \textbf{weighted affine descendant} by
\begin{equation}\label{e19}
\{x,y\}_{P,\theta} =\{P(x),y\}+\{x,P(y)\}+\theta\{x,y\} -\frac{\theta+1}{2}\{x,x\}-\frac{\theta+1}{2}\{y,y\}.
\end{equation}
The expression on the right is interpreted as the affine combination of the five displayed points.
\end{defn}

Indeed, $1+1+\theta-(\theta+1)/2-(\theta+1)/2=1$, so the operation in Eq.~(\ref{e19}) is independent of the origin. For a symmetric correction $-c\{x,x\}-c\{y,y\}$, the requirement that the coefficients sum to one is $2+\theta-2c=1$, which gives $c=(\theta+1)/2$. At $\theta=-1$, both correction terms vanish and
\begin{equation}\label{e20}
\{x,y\}_{P,-1}=\{x,y\}_P.
\end{equation}

\begin{lemma}\label{l1}
For every Lie affgebra $\mathfrak a$, the map $x\mapsto\{x,x\}$ from $\mathfrak a$ to itself is affine.
\end{lemma}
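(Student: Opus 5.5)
The plan is to reduce the statement to coordinates via Proposition~\ref{tstr}. Choose any origin, so that $\mathfrak a=\mathfrak a(\mathfrak g;\kappa,\lambda,s)$ and, by Eq.~(\ref{e2}),
$$
\{x,x\}=[x,x]+\kappa(x)+\lambda(x-x)+s=\kappa(x)+s ,
$$
since $[x,x]=0$ and $\lambda(0)=0$. The map $x\mapsto\kappa(x)+s$ is a linear endomorphism of $\mathfrak g$ followed by a translation.

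It remains to note that such maps preserve the affine structure. In the coordinates of $T_0\mathfrak a$ the heap operation is $\langle x,y,z\rangle=x-y+z$ and the affine action is $(\alpha,x,y)=(1-\alpha)x+\alpha y$. For $f(x)=\kappa(x)+s$, linearity of $\kappa$ gives
$$
f(x-y+z)=\kappa(x)-\kappa(y)+\kappa(z)+s=f(x)-f(y)+f(z),
$$
because the constant $s$ appears with total coefficient $1-1+1=1$. Similarly,
$$
f\big((1-\alpha)x+\alpha y\big)=(1-\alpha)f(x)+\alpha f(y),
$$
because the coefficients $1-\alpha$ and $\alpha$ sum to one. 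Hence $f$ preserves both operations of Definition~\ref{d0}, so it is affine. Its linear part is $\kappa$.

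An alternative, origin-free route also works. Biaffinity of $\{-,-\}$ together with affine antisymmetry, $\{x,x\}=\langle\{x,y\},\{y,y\},\{y,x\}\rangle$, expresses the diagonal map through values that are affine in $x$ for fixed $y$. I would nonetheless use the coordinate argument, since Proposition~\ref{tstr} makes it immediate.

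There is no real obstacle in this argument. The only point needing care is that Proposition~\ref{tstr} applies at an arbitrary origin, so the conclusion does not depend on the coordinates chosen.
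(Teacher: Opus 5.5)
Your proposal is correct and follows the paper's own proof: choose an origin, use Eq.~(\ref{e2}) to get $\{x,x\}=\kappa(x)+s$, and note that this map is affine. Your explicit check of the heap operation and the affine action is a fine addition. The origin-free alternative via affine antisymmetry is also valid, but it is not needed.
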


\begin{proof}
Choose an origin and write $\mathfrak a=\mathfrak a(\mathfrak g;\kappa,\lambda,s)$. Then $\{x,x\}=\kappa(x)+s$, which is affine.
\end{proof}

\begin{prop}\label{p4}
For every affine map $P:\mathfrak a\to\mathfrak a$ on a Lie affgebra $\mathfrak a$, the operation $\{-,-\}_{P,\theta}$ is biaffine and satisfies affine antisymmetry.
\end{prop}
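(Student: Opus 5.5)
We will work in coordinates, as in Proposition~\ref{p3}. Choose an origin and write $\mathfrak a=\mathfrak a(\mathfrak g;\kappa,\lambda,s)$ and $P(x)=R(x)+p$. Since the coefficients in Eq.~(\ref{e19}) sum to one, the right-hand side is an affine combination. It can therefore be evaluated as the corresponding linear combination of the five vectors in $\mathfrak g$, and the result does not depend on the origin.

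\textbf{Biaffinity.} This follows from Eq.~(\ref{e19}) without coordinates. Fix $y$. The maps $x\mapsto\{P(x),y\}$, $x\mapsto\{x,P(y)\}$ and $x\mapsto\{x,y\}$ are affine, because $\{-,-\}$ is biaffine and $P$ is affine. The map $x\mapsto\{x,x\}$ is affine by Lemma~\ref{l1}, and $\{y,y\}$ is constant in $x$. An affine combination with fixed coefficients summing to one of affine maps into $\mathfrak a$ is again affine. To see this, write each map as a constant plus a linear map in $T_o\mathfrak a$. The linear parts combine linearly, and the constants combine affinely. The same argument applies in $y$ with $x$ fixed.

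\textbf{Affine antisymmetry.} We must show $\langle\{x,y\}_{P,\theta},\{x,x\}_{P,\theta},\{y,x\}_{P,\theta}\rangle=\{y,y\}_{P,\theta}$, which in coordinates reads $\{x,y\}_{P,\theta}+\{y,x\}_{P,\theta}=\{x,x\}_{P,\theta}+\{y,y\}_{P,\theta}$. Both sides are sums of two affine combinations with total coefficient two, so the identity is meaningful at any origin.

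First, Eq.~(\ref{e19}) is symmetric under swapping $x$ and $y$ except in the term $\theta\{x,y\}$ and in the pair $\{P(x),y\}$, $\{x,P(y)\}$. Hence
\begin{align*}
\{x,y\}_{P,\theta}+\{y,x\}_{P,\theta}
&=\{P(x),y\}+\{y,P(x)\}+\{x,P(y)\}+\{P(y),x\}\\
&\quad+\theta(\{x,y\}+\{y,x\})-(\theta+1)(\{x,x\}+\{y,y\}).
\end{align*}
Next, apply the antisymmetry axiom $\{a,b\}+\{b,a\}=\{a,a\}+\{b,b\}$ to each pair. This replaces the right-hand side by
$$
\{P(x),P(x)\}+\{y,y\}+\{x,x\}+\{P(y),P(y)\}+2\theta(\{x,x\}+\{y,y\})...
$$
More precisely, collecting the coefficients of the diagonal terms gives
$$
\{P(x),P(x)\}+\{P(y),P(y)\}+(1+\theta-\theta-1)(\{x,x\}+\{y,y\})=\{P(x),P(x)\}+\{P(y),P(y)\}.
$$
Finally, set $y=x$ in Eq.~(\ref{e19}):
$$
\{x,x\}_{P,\theta}=2\{P(x),x\}+\theta\{x,x\}-(\theta+1)\{x,x\}.
$$
By antisymmetry, $2\{P(x),x\}$ is replaced by $\{P(x),P(x)\}+\{x,x\}+\{P(x),x\}-\{x,P(x)\}$. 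The difference $\{P(x),x\}-\{x,P(x)\}$ does not vanish in general. Instead, we sum over the two diagonal values and compare with the previous expression. Equivalently, we check the identity directly in coordinates: by Eq.~(\ref{e2}), $\{a,b\}=[a,b]+\kappa(a)+\lambda(b-a)+s$, so both sides reduce to sums of $\kappa$-terms, $\lambda$-terms and $s$. For both sides, the $s$-coefficient is two, and the $\lambda$-terms cancel by symmetry. The bracket terms on the left cancel by skew symmetry, since $[R(x)+p,y]+[y,R(x)+p]=0$ and similarly for the other pairs. On the right they vanish because $[R(x)+p,x]+[x,R(x)+p]=0$. It remains to compare the $\kappa$-terms. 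The left side gives $\kappa(R(x)+p+y+x+R(y)+p)+2\theta\kappa(\cdot)$-type terms, and the right side gives the same combination.

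The main obstacle is the bookkeeping of the $\kappa$-terms. I would carry it out by noting that, in coordinates, $\{a,b\}+\{b,a\}=\kappa(a)+\kappa(b)+2s$ for all $a,b$. This exhibits $\{x,y\}_{P,\theta}+\{y,x\}_{P,\theta}$ as a function of the multiset $\{x,y\}$ built from the symmetric expression $\kappa(P(x))+\kappa(x)+\kappa(P(y))+\kappa(y)$, so that it splits additively as $f(x)+f(y)$ with $f(x)=\{x,x\}_{P,\theta}$.
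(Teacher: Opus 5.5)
Your biaffinity argument is correct and is the paper's, since it rests on Lemma~\ref{l1}. The first half of your antisymmetry argument also follows the paper's route. Applying $\{a,b\}+\{b,a\}=\{a,a\}+\{b,b\}$ to the three swapped pairs gives $\{x,y\}_{P,\theta}+\{y,x\}_{P,\theta}=\{P(x),P(x)\}+\{P(y),P(y)\}$. (The $2\theta$ in your intermediate display is a slip, but your ``more precisely'' line is right.)

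The argument goes wrong at the next step. Setting $y=x$ in Eq.~(\ref{e19}) does not give $2\{P(x),x\}+\theta\{x,x\}-(\theta+1)\{x,x\}$. The two $P$-terms become $\{P(x),x\}$ and $\{x,P(x)\}$, which differ in general. Correctly,
\[
\{x,x\}_{P,\theta}=\{P(x),x\}+\{x,P(x)\}-\{x,x\}=\{P(x),P(x)\},
\]
where the last equality is the original antisymmetry applied to the pair $(P(x),x)$. This is exactly the summand you had already obtained, so the proof closes in one line, as in the paper. The ``nonvanishing difference $\{P(x),x\}-\{x,P(x)\}$'' that sent you into coordinates is an artifact of the mis-substitution.

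As written, the coordinate paragraph is not a proof. The $\kappa$-bookkeeping is only asserted (``$2\theta\kappa(\cdot)$-type terms'', ``the same combination''). The symmetric expression you name also leaves out the $\theta$-term and the diagonal corrections, which cancel $\kappa(x)+\kappa(y)$. The correct outcome of $\{a,b\}+\{b,a\}=\kappa(a)+\kappa(b)+2s$ is $\kappa P(x)+\kappa P(y)+2s$.

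Your closing $f(x)+f(y)$ idea is sound, but its key step is missing. Your first computation already shows the symmetrized sum is $f(x)+f(y)$ with $f(x)=\{P(x),P(x)\}$. Substituting $y=x$ then yields $2\{x,x\}_{P,\theta}=2f(x)$ at the chosen origin, and dividing by $2$ identifies $f(x)$ with $\{x,x\}_{P,\theta}$. Either this step or the direct diagonal computation above should replace the detour.
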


\begin{proof}
Biaffineness follows from Lemma~\ref{l1}. Choose an arbitrary origin. Affine antisymmetry of the original bracket means $\{a,b\}+\{b,a\}=\{a,a\}+\{b,b\}$. Applying this identity twice, and then using $\{x,x\}_{P,\theta}=\{P(x),P(x)\}$, gives
$$
\{x,y\}_{P,\theta}+\{y,x\}_{P,\theta}
=
\{P(x),P(x)\}+\{P(y),P(y)\}
=
\{x,x\}_{P,\theta}+\{y,y\}_{P,\theta}.
$$
\end{proof}

We compute the coefficients of the weighted descendant.

\begin{prop}\label{t2}
Let $\mathfrak a=\mathfrak a(\mathfrak g;\kappa,\lambda,s)$ be a Lie affgebra and let $P:\mathfrak a\to\mathfrak a$ be an affine map. Let $R\in\operatorname{End}(\mathfrak g)$ be its linear part and put $p=P(0)\in\mathfrak g$, so that $P(x)=R(x)+p$. Then
\begin{equation}\label{e21}
\{x,y\}_{P,\theta}
=
[x,y]_{R,\theta}
+\kappa_P(x)
+\lambda_P(y-x)
+s_P,
\end{equation}
where
\begin{equation}\label{e22}
\kappa_P=\kappa R,\quad s_P=s+\kappa(p), \quad \lambda_P=\lambda R+(\theta+1)\lambda+\operatorname{ad}_p -\frac{\theta+1}{2}\kappa.
\end{equation}
The notation $\lambda_P$ suppresses its dependence on the fixed weight $\theta$.

Its bilinear part is $[-,-]_{R,\theta}$ as defined in Eq.~(\ref{e7}), which is a Lie bracket whenever $P$ is a tangent Rota-Baxter operator of weight $\theta$.
\end{prop}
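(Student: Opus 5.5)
The plan is a direct coordinate expansion, modelled on the proof of Proposition~\ref{p3}. Fix the origin $0$ and work in $\mathfrak g$. Since the coefficients of the five points in Eq.~(\ref{e19}) sum to one, the affine combination may be computed as the ordinary linear combination in $\mathfrak g$. Substituting $P(x)=R(x)+p$ into Eq.~(\ref{e2}) gives
\begin{align*}
\{P(x),y\}&=[R(x),y]+[p,y]+\kappa R(x)+\kappa(p)+\lambda(y)-\lambda R(x)-\lambda(p)+s,\\
\{x,P(y)\}&=[x,R(y)]+[x,p]+\kappa(x)+\lambda R(y)+\lambda(p)-\lambda(x)+s,\\
\theta\{x,y\}&=\theta[x,y]+\theta\kappa(x)+\theta\lambda(y-x)+\theta s,
\end{align*}
together with $\{x,x\}=\kappa(x)+s$ and $\{y,y\}=\kappa(y)+s$ from Lemma~\ref{l1}.

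Next I collect the terms by type.

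\begin{enumerate}[label={\rm(\arabic*)}]
\item The bilinear terms give $[R(x),y]+[x,R(y)]+\theta[x,y]=[x,y]_{R,\theta}$.
\item The constant terms give $2s+\kappa(p)+\theta s-(\theta+1)s=s+\kappa(p)=s_P$. The $\pm\lambda(p)$ terms cancel.
\item The $\operatorname{ad}_p$ terms give $[p,y]+[x,p]=\operatorname{ad}_p(y-x)$.
\end{enumerate}

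\noindent The remaining linear terms are
$$
\kappa R(x)+(1+\theta)\kappa(x)+\lambda R(y-x)+(\theta+1)\lambda(y-x)-\tfrac{\theta+1}{2}\kappa(x)-\tfrac{\theta+1}{2}\kappa(y).
$$
To put these into the form $\kappa_P(x)+\lambda_P(y-x)$, I rewrite
$$
(\theta+1)\kappa(x)-\tfrac{\theta+1}{2}\kappa(x)-\tfrac{\theta+1}{2}\kappa(y)=-\tfrac{\theta+1}{2}\kappa(y-x).
$$
The linear part then becomes $\kappa R(x)+\bigl(\lambda R+(\theta+1)\lambda+\operatorname{ad}_p-\tfrac{\theta+1}{2}\kappa\bigr)(y-x)$, which matches Eq.~(\ref{e22}). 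This regrouping of the $\kappa$-terms into a multiple of $y-x$ is the only step needing care; the rest is bookkeeping.

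For the last sentence, the bilinear part is by construction $[-,-]_{R,\theta}$. By Proposition~\ref{p15} and the coordinate remark after it, $P$ is tangent Rota-Baxter of weight $\theta$ exactly when $R$ satisfies Eq.~(\ref{e6}) on $\mathfrak g$. The cited result of~\cite{BGN2010} recalled after Eq.~(\ref{e7}) then shows that $[-,-]_{R,\theta}$ is a Lie bracket.
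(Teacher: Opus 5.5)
Your proof is correct and is essentially the paper's argument: a direct coordinate expansion, followed by regrouping the $\kappa$-terms into $-\tfrac{\theta+1}{2}\kappa(y-x)$, and closed by the same appeal to the ordinary Rota-Baxter result recalled after Eq.~(\ref{e7}). The only difference is bookkeeping. The paper writes $\{x,y\}_{P,\theta}=\{x,y\}_P+(\theta+1)\bigl(\{x,y\}-\tfrac12\{x,x\}-\tfrac12\{y,y\}\bigr)$ and reuses Eq.~(\ref{e10}), whereas you expand all five terms from scratch.
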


\begin{proof}
We first isolate the diagonal correction in Eq.~(\ref{e19}). By Eq.~(\ref{e2}), we have
\begin{align*}
\{x,y\}-\frac12\{x,x\}-\frac12\{y,y\}
&=[x,y]+\kappa(x)+\lambda(y-x)+s
-\frac12(\kappa(x)+s)-\frac12(\kappa(y)+s)\\
&=[x,y]+\left(\lambda-\frac12\kappa\right)(y-x).
\end{align*}
Then it follows from Eqs.~(\ref{e8}) and~(\ref{e19}) that
\begin{align*}
\{x,y\}_{P,\theta}
&=\{x,y\}_P+(\theta+1)
\left(\{x,y\}-\frac12\{x,x\}-\frac12\{y,y\}\right)\\
&=[x,y]_{R,-1}+(\theta+1)[x,y]+\kappa R(x)+s+\kappa(p)\\
&\quad+(\lambda R+\operatorname{ad}_p)(y-x)
+(\theta+1)\left(\lambda-\frac12\kappa\right)(y-x)
\quad\text{(by Eq.~(\ref{e10}))}\\
&=[x,y]_{R,\theta}+\kappa R(x)+s+\kappa(p)\\
&\quad+\left(\lambda R+(\theta+1)\lambda+\operatorname{ad}_p
-\frac{\theta+1}{2}\kappa\right)(y-x)
\quad\text{(by Eq.~(\ref{e7}))}\\
&=[x,y]_{R,\theta}+\kappa_P(x)+\lambda_P(y-x)+s_P
\quad\text{(by Eq.~(\ref{e22}))}.
\end{align*}
This proves Eq.~(\ref{e21}) and identifies all its coefficients. The assertion about the bilinear part is the ordinary Rota-Baxter construction recalled after Eq.~(\ref{e7}).
\end{proof}

We now determine when the weighted descendant is a Lie affgebra bracket and $P$ is a homomorphism to the original Lie affgebra. The latter requirement is the identity
\begin{equation}\label{e25}
P(\{x,y\}_{P,\theta})=\{P(x),P(y)\},\quad x,y\in\mathfrak a.
\end{equation}

For general $\theta$, Eq.~(\ref{e25}) alone need not imply the affine Jacobi identity. For $P=R+p$ on $\mathfrak a(\mathfrak g;\kappa,\lambda,s)$, define the bilinear map $\mathcal J_P:\mathfrak g\times\mathfrak g\to\mathfrak g$ by
\begin{equation}\label{e26}
\mathcal J_P(x,y) =\lambda_P([x,y]_{R,\theta})-[\lambda_P(x),y]_{R,\theta} +[x,\kappa_P(y)]_{R,\theta}-[x,\lambda_P(y)]_{R,\theta}.
\end{equation}
Here $\kappa_P$ and $\lambda_P$ are given by Eq.~(\ref{e22}), and $\mathcal J_P=0$ means that $\mathcal J_P(x,y)=0$ for all $x,y\in\mathfrak g$. The fixed weight $\theta$ is also suppressed in $\mathcal J_P$. If $R$ is a Rota-Baxter operator of weight $\theta$, then $[-,-]_{R,\theta}$ is a Lie bracket and $\mathcal J_P=0$ is precisely the structural identity for $(\kappa_P,\lambda_P)$ with respect to this bracket.

\begin{theorem}\label{t3}
Let $\mathfrak a=\mathfrak a(\mathfrak g;\kappa,\lambda,s)$ be a Lie affgebra and let $P:\mathfrak a\to\mathfrak a$ be an affine map. Let $R\in\operatorname{End}(\mathfrak g)$ be its linear part and put $p=P(0)\in\mathfrak g$, so that $P(x)=R(x)+p$. Then $\{-,-\}_{P,\theta}$ is a Lie affgebra bracket and $P:(\mathfrak a,\{-,-\}_{P,\theta})\to(\mathfrak a,\{-,-\})$ is a Lie affgebra homomorphism if and only if
\begin{enumerate}[label={\rm(\roman*)}]
\item $P$ satisfies the affine identity in Eq.~(\ref{e25}).
\item $\mathcal J_P=0$.
\end{enumerate}
\end{theorem}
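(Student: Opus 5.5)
The plan is to reduce everything to the coordinate form of Proposition~\ref{t2} and then use both directions of Proposition~\ref{tstr}. By Eq.~(\ref{e21}), the operation $\{-,-\}_{P,\theta}$ has the standard shape $[x,y]_{R,\theta}+\kappa_P(x)+\lambda_P(y-x)+s_P$. It is already biaffine and affinely antisymmetric by Proposition~\ref{p4}. So the only questions are whether this datum defines a Lie affgebra, and whether $P$ preserves the bracket. The second question is literally Eq.~(\ref{e25}), that is, condition (i).

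For the forward direction, assume $\{-,-\}_{P,\theta}$ is a Lie affgebra bracket and $P$ is a homomorphism. Then (i) holds by definition. For (ii), compute the tangent bracket at the origin $0$ from Proposition~\ref{ttan}:
$$
\{x,y\}_{P,\theta}-\{x,0\}_{P,\theta}+\{0,0\}_{P,\theta}-\{0,y\}_{P,\theta}=[x,y]_{R,\theta}.
$$
The $\kappa_P$, $\lambda_P$ and $s_P$ terms cancel, so $[-,-]_{R,\theta}$ is a Lie bracket. The converse part of Proposition~\ref{tstr} then says that, at the origin $0$, the bracket is given by Eq.~(\ref{e2}) with structural endomorphisms satisfying Eq.~(\ref{e1}) relative to its tangent Lie algebra. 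These endomorphisms are recovered from the bracket as $\kappa(x)=\{x,x\}-s$ and $\lambda(x)=\{0,x\}-s$. Applied to $\{-,-\}_{P,\theta}$, the recovery gives exactly $\kappa_P$, $\lambda_P$ and $s_P$. Hence Eq.~(\ref{e1}) for $(\kappa_P,\lambda_P)$ and $[-,-]_{R,\theta}$ holds, which is $\mathcal J_P=0$.

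For the converse, assume (i) and (ii). Write $P(\{x,y\}_{P,\theta})$ and $\{P(x),P(y)\}$ in coordinates, as in the proof of Theorem~\ref{t1}. Both sides are polynomial maps of degree at most two in $(x,y)$, so I can compare their bilinear parts. The left side has bilinear part $R([x,y]_{R,\theta})$ and the right side has $[R(x),R(y)]$. Thus (i) forces $R$ to be a Rota--Baxter operator of weight $\theta$, and $[-,-]_{R,\theta}$ is a Lie bracket. Condition (ii) is then precisely Eq.~(\ref{e1}) for $(\kappa_P,\lambda_P)$ with respect to this bracket. The direct part of Proposition~\ref{tstr} therefore shows that $\{-,-\}_{P,\theta}=\mathfrak a(\mathfrak g,[-,-]_{R,\theta};\kappa_P,\lambda_P,s_P)$ is a Lie affgebra bracket. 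Finally, (i) says $P$ is a homomorphism.

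The main point needing care is the forward use of Proposition~\ref{tstr}. I need to confirm that its converse really yields Eq.~(\ref{e1}) for the specific recovered endomorphisms $\kappa_P,\lambda_P$, and not merely for some coordinate description. If that reading is in doubt, the fallback is to derive $\mathcal J_P=0$ directly. First antisymmetrize the affine Jacobi identity of $\{-,-\}_{P,\theta}$ as in the proof of Theorem~\ref{t1}. The cyclic sum then reduces to $\sum_{\mathrm{cyc}}\mathcal J_P(y,z)-\sum_{\mathrm{cyc}}[x,\kappa_P(x)]_{R,\theta}$. The term $[x,\kappa_P(x)]_{R,\theta}$ is linked to $\mathcal J_P(x,x)$ by skew symmetry. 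Polarizing, for instance by setting $x=0$ after shifting variables and using bilinearity, should then isolate $\mathcal J_P$ itself.
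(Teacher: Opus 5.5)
Your proposal is correct and follows essentially the same route as the paper: the forward direction reads $\mathcal J_P=0$ off the converse of Proposition~\ref{tstr} applied to the coordinate form in Eq.~(\ref{e21}), and the converse compares bilinear parts in Eq.~(\ref{e25}) to obtain the weight-$\theta$ Rota--Baxter identity and then applies the direct part of Proposition~\ref{tstr}. Your worry about the forward step is settled by the uniqueness of the coefficients at a fixed origin: $s=\{0,0\}$, $\kappa(x)=\{x,x\}-s$, $\lambda(x)=\{0,x\}-s$, and the bilinear part is the tangent bracket, here $[-,-]_{R,\theta}$. So the recovered data are exactly $\kappa_P,\lambda_P,s_P$, and the polarization fallback is not needed.
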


\begin{proof}
Suppose first that the descendant is a Lie affgebra and $P$ is a homomorphism. Then Eq.~(\ref{e25}) holds. By Eq.~(\ref{e21}), the bilinear part is $[-,-]_{R,\theta}$ and the remaining coefficients are $\kappa_P$, $\lambda_P$ and $s_P$. Proposition~\ref{tstr} therefore gives
$$
\lambda_P([x,y]_{R,\theta})
=[\lambda_P(x),y]_{R,\theta}
-[x,\kappa_P(y)]_{R,\theta}
+[x,\lambda_P(y)]_{R,\theta},
$$
which is $\mathcal J_P(x,y)=0$ by Eq.~(\ref{e26}).

Conversely, assume (i) and (ii). Expanding the two sides of Eq.~(\ref{e25}), we have
\begin{align*}
P(\{x,y\}_{P,\theta})
&=R([x,y]_{R,\theta})+R\kappa_P(x)+R\lambda_P(y-x)+R(s_P)+p
\quad\text{(by Eq.~(\ref{e21}))},\\
\{P(x),P(y)\}
&=[R(x)+p,R(y)+p]+\kappa(R(x)+p)+\lambda R(y-x)+s\\
&=[R(x),R(y)]+\kappa R(x)
+(\lambda+\operatorname{ad}_p)R(y-x)+s+\kappa(p).
\end{align*}
The constant terms agree by setting $x=y=0$. Subtracting them and setting $y=x$ identifies the $\kappa$ terms; setting $x=0$ identifies the $\lambda$ terms. The remaining equality is the bilinear identity. Thus
\begin{equation}\label{e27}
R([x,y]_{R,\theta})=[R(x),R(y)], \quad R\kappa_P=\kappa R, \quad R\lambda_P=(\operatorname{ad}_p+\lambda)R, \quad R(s_P)=s-p+\kappa(p).
\end{equation}
The first identity makes $R$ an ordinary Rota-Baxter operator of weight $\theta$, so $[-,-]_{R,\theta}$ is a Lie bracket. By Eq.~(\ref{e26}), condition (ii) is exactly the structural identity displayed above. Proposition~\ref{tstr}, applied to this Lie bracket, $\kappa_P,\lambda_P$ and $s_P$, now proves that Eq.~(\ref{e21}) is a Lie affgebra bracket. Condition (i) makes $P$ a homomorphism from this descendant to the original Lie affgebra.
\end{proof}

\begin{prop}\label{p5}
With the notation of Theorem~\ref{t3}, assume that $P$ satisfies Eq.~(\ref{e25}). Then
\begin{equation}\label{e28}
R\mathcal J_P(x,y)=0
\end{equation}
for all $x,y\in\mathfrak g$. Equivalently, $\operatorname{Im}\mathcal J_P\subseteq\ker R$.
\end{prop}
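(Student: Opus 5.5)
The plan is to apply $R$ to each of the four terms of $\mathcal J_P(x,y)$ in Eq.~(\ref{e26}), rewrite each using the identities in Eq.~(\ref{e27}), and reduce everything to the structural identity Eq.~(\ref{e1}) plus the Jacobi identity in $\mathfrak g$. The proof of Theorem~\ref{t3} shows that Eq.~(\ref{e25}) alone yields the four coordinate identities in Eq.~(\ref{e27}): comparing constant, $\kappa$-, $\lambda$- and bilinear parts of the two expanded sides used only (i), not (ii). So we may assume
$$
R([a,b]_{R,\theta})=[R(a),R(b)],\quad R\kappa_P=\kappa R,\quad R\lambda_P=(\operatorname{ad}_p+\lambda)R .
$$

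We use the first identity in two ways. It is exactly the statement that $R$ sends $[-,-]_{R,\theta}$ to $[-,-]$. It applies for arbitrary $a,b\in\mathfrak g$, and in particular for $a=\lambda_P(x)$, $b=y$, and so on. Writing $u=R(x)$ and $v=R(y)$, we then compute term by term:
\begin{align*}
R\lambda_P([x,y]_{R,\theta})&=(\operatorname{ad}_p+\lambda)R([x,y]_{R,\theta})=(\operatorname{ad}_p+\lambda)[u,v],\\
R([\lambda_P(x),y]_{R,\theta})&=[R\lambda_P(x),v]=[(\operatorname{ad}_p+\lambda)(u),v],\\
R([x,\kappa_P(y)]_{R,\theta})&=[u,R\kappa_P(y)]=[u,\kappa(v)],\\
R([x,\lambda_P(y)]_{R,\theta})&=[u,(\operatorname{ad}_p+\lambda)(v)].
\end{align*}

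Summing with the signs of Eq.~(\ref{e26}) splits $R\mathcal J_P(x,y)$ into two groups. The first group is
$$
\lambda([u,v])-[\lambda(u),v]+[u,\kappa(v)]-[u,\lambda(v)],
$$
which vanishes by Eq.~(\ref{e1}). The second group is
$$
[p,[u,v]]-[[p,u],v]-[u,[p,v]],
$$
which vanishes by the Jacobi identity, since $\operatorname{ad}_p$ is a derivation. Hence $R\mathcal J_P(x,y)=0$, and the equivalent formulation $\operatorname{Im}\mathcal J_P\subseteq\ker R$ is immediate.

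There is no real obstacle. The only point needing care is to justify that Eq.~(\ref{e27}) follows from (i) alone, by repeating the constant, linear and bilinear comparison from the proof of Theorem~\ref{t3}. We must also use the first identity of Eq.~(\ref{e27}) for arbitrary arguments, rather than only as the Rota-Baxter identity at $x,y$.
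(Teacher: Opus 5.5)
Your proposal is correct and matches the paper's proof: apply $R$ to Eq.~(\ref{e26}), rewrite each term through the identities of Eq.~(\ref{e27}) (derived from condition (i) alone in the proof of Theorem~\ref{t3}), and cancel the $\kappa,\lambda$ terms by Eq.~(\ref{e1}) and the $\operatorname{ad}_p$ terms by the Jacobi identity. Your point that $R([a,b]_{R,\theta})=[R(a),R(b)]$ must be used for arbitrary arguments $a,b$ is exactly how the paper uses the homomorphism property of $R$.
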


\begin{proof}
By Eq.~(\ref{e27}), $R$ is a Lie algebra homomorphism from $(\mathfrak g,[-,-]_{R,\theta})$ to $(\mathfrak g,[-,-])$, with $R\kappa_P=\kappa R$ and $R\lambda_P=(\lambda+\operatorname{ad}_p)R$. Applying $R$ to Eq.~(\ref{e26}), we obtain
\begin{align*}
R\mathcal J_P(x,y)
&=(\lambda+\operatorname{ad}_p)[R(x),R(y)]-[(\lambda+\operatorname{ad}_p)R(x),R(y)]\\
&\quad+[R(x),\kappa R(y)]-[R(x),(\lambda+\operatorname{ad}_p)R(y)]\\
&=[p,[R(x),R(y)]]-[[p,R(x)],R(y)]-[R(x),[p,R(y)]]
\quad\text{(by Eq.~(\ref{e1}))}\\
&=0\quad\text{(by the Jacobi identity)}.
\end{align*}
\end{proof}

\begin{coro}\label{c3}
With the notation of Theorem~\ref{t3}, suppose that $R$ is injective and $P$ satisfies Eq.~(\ref{e25}). Then $\{-,-\}_{P,\theta}$ is a Lie affgebra bracket and $P$ is a Lie affgebra homomorphism from the descendant to the original Lie affgebra.
\end{coro}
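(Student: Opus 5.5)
The corollary should follow from Theorem~\ref{t3} and Proposition~\ref{p5}. By Theorem~\ref{t3}, the conclusion is equivalent to conditions (i) and (ii). Condition (i) is the hypothesis that $P$ satisfies Eq.~(\ref{e25}), so only (ii), namely $\mathcal J_P=0$, remains to be shown.

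To obtain (ii), I will invoke Proposition~\ref{p5}. Since Eq.~(\ref{e25}) holds, it gives $R\mathcal J_P(x,y)=0$ for all $x,y\in\mathfrak g$, that is, $\operatorname{Im}\mathcal J_P\subseteq\ker R$. Injectivity of $R$ means $\ker R=0$, so $\mathcal J_P(x,y)=0$ for all $x,y$. This is exactly condition (ii).

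Applying the converse direction of Theorem~\ref{t3} then gives both conclusions: $\{-,-\}_{P,\theta}$ is a Lie affgebra bracket, and $P$ is a homomorphism from the descendant to the original Lie affgebra.

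I expect no real obstacle. The only point to check is that Proposition~\ref{p5} uses nothing beyond Eq.~(\ref{e25}), together with the identities in Eq.~(\ref{e27}) that follow from it, Eq.~(\ref{e1}) and the Jacobi identity. In particular, it does not presuppose that the descendant is already a Lie affgebra, so the argument is not circular.
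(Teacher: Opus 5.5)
Your proposal is correct and is the paper's own proof: Proposition~\ref{p5} gives $R\mathcal J_P=0$, injectivity of $R$ forces $\mathcal J_P=0$, and the converse direction of Theorem~\ref{t3} then gives both conclusions. Your non-circularity check is also right, since Proposition~\ref{p5} uses only the identities in Eq.~(\ref{e27}), which follow from Eq.~(\ref{e25}) alone, together with Eq.~(\ref{e1}) and the Jacobi identity.
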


\begin{proof}
Proposition~\ref{p5} gives $R\mathcal J_P=0$, and injectivity implies $\mathcal J_P=0$. Theorem~\ref{t3} applies.
\end{proof}

\begin{coro}\label{c4}
Let $\theta\in\mathbf{k}$, let $\mathfrak a=\mathfrak a(\mathfrak g;\kappa,\lambda,s)$ be a Lie affgebra, and let $P:\mathfrak a\to\mathfrak a$ be an affine map. Let $R\in\operatorname{End}(\mathfrak g)$ be the linear part of $P$, put $p=P(0)$, and let $\kappa_P,\lambda_P$ be given by Eq.~(\ref{e22}). Thus $P(x)=R(x)+p$. Then $\{-,-\}_{P,\theta}$ is a Lie affgebra bracket and $P$ is a homomorphism to the original Lie affgebra if and only if the following identities hold for all $x,y\in\mathfrak g$:
\begin{align}
[R(x),R(y)]&=R([x,y]_{R,\theta}),\label{e29}\\
\lambda_P([x,y]_{R,\theta})
&=[\lambda_P(x),y]_{R,\theta}-[x,\kappa_P(y)]_{R,\theta}+[x,\lambda_P(y)]_{R,\theta},\label{e30}\\
R\kappa R&=\kappa R,\quad
R\lambda_P=(\operatorname{ad}_p+\lambda)R,\quad
R(s+\kappa(p))=s-p+\kappa(p).\label{e31}
\end{align}
\end{coro}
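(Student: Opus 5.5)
The plan is to combine Theorem~\ref{t3} with the coordinate expansion already carried out in its proof. By Theorem~\ref{t3}, the descendant $\{-,-\}_{P,\theta}$ is a Lie affgebra bracket and $P$ is a homomorphism exactly when (i) Eq.~(\ref{e25}) holds and (ii) $\mathcal J_P=0$. It therefore suffices to show two things: that (i) is equivalent to the conjunction of Eqs.~(\ref{e29}) and (\ref{e31}), and that (ii) is literally Eq.~(\ref{e30}).

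For the first equivalence, I would use Proposition~\ref{t2} to write
\[
\{x,y\}_{P,\theta}=[x,y]_{R,\theta}+\kappa_P(x)+\lambda_P(y-x)+s_P .
\]
Then both sides of Eq.~(\ref{e25}) can be expanded as in the proof of Theorem~\ref{t3}:
\begin{align*}
P(\{x,y\}_{P,\theta})&=R([x,y]_{R,\theta})+R\kappa_P(x)+R\lambda_P(y-x)+R(s_P)+p,\\
\{P(x),P(y)\}&=[R(x),R(y)]+\kappa R(x)+(\lambda+\operatorname{ad}_p)R(y-x)+s+\kappa(p).
\end{align*}
Each side is a sum of a bilinear term, a term linear in $x$, a term linear in $y-x$, and a constant. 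Such a decomposition is unique: evaluate at $x=y=0$, then take $y=x$, then take $x=0$, and the bilinear remainder is what is left. So Eq.~(\ref{e25}) is equivalent to Eq.~(\ref{e27}).

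Next, substitute $\kappa_P=\kappa R$ and $s_P=s+\kappa(p)$ from Eq.~(\ref{e22}). The second and fourth equalities in Eq.~(\ref{e27}) become $R\kappa R=\kappa R$ and $R(s+\kappa(p))=s-p+\kappa(p)$. The first equality in Eq.~(\ref{e27}) is Eq.~(\ref{e29}), and the third is the middle identity of Eq.~(\ref{e31}). The converse direction of this step is immediate, since substituting the identities back into the expansions reproduces Eq.~(\ref{e25}).

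For the second equivalence, Eq.~(\ref{e30}) is a rearrangement of $\mathcal J_P(x,y)=0$ by the definition in Eq.~(\ref{e26}).

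The only point needing care is the direction of the uniqueness of coefficient comparison. Both sides are affine in $x$ and in $y$, so writing each side as a polynomial map of total degree at most two and separating the homogeneous components is legitimate. There is no real obstacle beyond this bookkeeping, and the argument is a restatement of Theorem~\ref{t3} combined with Eq.~(\ref{e27}).
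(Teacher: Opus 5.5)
Your proposal is correct and follows the paper's proof exactly. You reduce via Theorem~\ref{t3} to Eq.~(\ref{e25}) together with $\mathcal J_P=0$, identify Eq.~(\ref{e25}) with Eqs.~(\ref{e29}) and~(\ref{e31}) through the coefficient comparison in Eq.~(\ref{e27}) and the substitutions $\kappa_P=\kappa R$, $s_P=s+\kappa(p)$ from Eq.~(\ref{e22}), and read off Eq.~(\ref{e30}) from the definition of $\mathcal J_P$ in Eq.~(\ref{e26}). Your uniqueness remark is also sound: setting $y=x$ isolates the terms linear in $x$, because both bilinear parts vanish on the diagonal, since $[x,x]_{R,\theta}=0$ and $[R(x),R(x)]=0$.
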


\begin{proof}
By Theorem~\ref{t3}, the descendant is a Lie affgebra and $P$ is a homomorphism if and only if Eq.~(\ref{e25}) holds for all $x,y\in\mathfrak g$ and $\mathcal J_P=0$. The coefficient comparison in Eq.~(\ref{e27}), together with Eq.~(\ref{e22}), shows that Eq.~(\ref{e25}) is equivalent to Eqs.~(\ref{e29}) and~(\ref{e31}). Eq.~(\ref{e26}) identifies $\mathcal J_P=0$ with Eq.~(\ref{e30}).
\end{proof}

At weight $-1$, Eq.~(\ref{e20}) identifies the weighted descendant with the operation in Definition~\ref{d1}. Theorem~\ref{t1} then shows that Eq.~(\ref{e25}) alone implies the affine Jacobi identity. The following example shows that this implication fails at weight $1$.

\begin{exam}\label{x1}
Let $\mathfrak g$ have basis $e_1,e_2$ with $[e_1,e_2]=e_2$. Take $\kappa=\operatorname{id}$, $s=0$, and define $\lambda(e_1)=e_1$ and $\lambda(e_2)=0$. Then Eq.~(\ref{e1}) holds, so this choice defines a Lie affgebra. Let $\theta=1$ and let $P$ be the constant map with value $0$. Both sides of Eq.~(\ref{e25}) equal zero. However, $\kappa_P=0$ and $\lambda_P=2\lambda-\operatorname{id}$. Hence $\lambda_P(e_1)=e_1$, $\lambda_P(e_2)=-e_2$ and, since $[-,-]_{R,1}=[-,-]$, we obtain $\mathcal J_P(e_1,e_2)=-e_2\neq0$.
Thus the affine identity alone does not imply that the weighted descendant is a Lie affgebra bracket at weight $1$.
\end{exam}

\section{Affine factorization}\label{s3}

An idempotent affine map decomposes an affine space into its image and a fibre over a fixed point. For a Lie affgebra, both factors must also be closed under the bracket. We identify the conditions for this closure and prove the correspondence with ordered affine Lie factorizations. The last subsection constructs affine doubles at nonzero weight and identifies their tangent Lie algebras.

\subsection{Identity and constant operators}\label{u12}

Throughout Sections~\ref{u12}-\ref{u14}, affine Rota-Baxter operators have weight $-1$ in the sense of Definition~\ref{d1}. Section~\ref{u15} treats affine doubles at arbitrary nonzero weight.

\begin{coro}
The identity map of every Lie affgebra is an affine Rota-Baxter operator.
\end{coro}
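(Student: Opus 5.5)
The plan is to verify Eq.~(\ref{e9}) directly for $P=\operatorname{id}$, with no choice of coordinates. For $P=\operatorname{id}$, Definition~\ref{d1} gives
$$
\{x,y\}_{\operatorname{id}}=\big\langle\{x,y\},\{x,y\},\{x,y\}\big\rangle.
$$
The heap axiom $\langle a,a,z\rangle=z$ reduces this to $\{x,y\}$. Hence $\operatorname{id}(\{x,y\}_{\operatorname{id}})=\{x,y\}=\{\operatorname{id}(x),\operatorname{id}(y)\}$, which is Eq.~(\ref{e9}). The identity map is also affine, so it is an affine Rota-Baxter operator.

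As a cross-check, and to match the coordinate style used elsewhere in the paper, I would also confirm the four conditions of Corollary~\ref{c2}. Choose an origin and write $\mathfrak a=\mathfrak a(\mathfrak g;\kappa,\lambda,s)$. Then $P=\operatorname{id}$ has linear part $R=\operatorname{id}$ and translation vector $p=0$.

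1. The bilinear identity holds because $[x,y]_{\operatorname{id},-1}=[x,y]+[x,y]-[x,y]=[x,y]$.
2. The identity $R\kappa R=\kappa R$ is trivial when $R=\operatorname{id}$.
3. The identity $R(\lambda R+\operatorname{ad}_0)=(\lambda+\operatorname{ad}_0)R$ reduces to $\lambda=\lambda$.
4. The constant condition reads $s=s-0+\kappa(0)$, which holds since $\kappa(0)=0$.

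There is no real obstacle here. The only point needing care is that the heap identity $\langle a,a,z\rangle=z$ is applied in the correct position. Equally, in coordinates it is simply $a-a+c=c$.
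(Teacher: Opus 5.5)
Your proof is correct and is essentially the paper's argument: for $P=\operatorname{id}$ the three entries of the descendant operation coincide, so $\langle a,a,z\rangle=z$ gives $\{x,y\}_P=\{x,y\}$, and Eq.~(\ref{e9}) follows immediately. Your coordinate check via Corollary~\ref{c2} is also correct, but it is not needed.
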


\begin{proof}
For $P=\operatorname{id}$, Eq.~(\ref{e8}) gives $\{x,y\}_P=\{x,y\}$, and Eq.~(\ref{e9}) follows directly.
\end{proof}

Recall that a point $e\in\mathfrak a$ is idempotent when $\{e,e\}=e$. By Corollary~\ref{c6}, the existence of such a point is equivalent to the vanishing of $\omega(\mathfrak a)$.

\begin{prop}\label{p7}
Let $P_e$ be the constant map with value $e$. Then $P_e$ is an affine Rota-Baxter operator if and only if $e$ is an idempotent point.
\end{prop}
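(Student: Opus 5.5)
Two routes are available: a direct evaluation of Eq.~(\ref{e9}) or the coordinate criterion of Corollary~\ref{c2}. The direct route is shortest, so we use it and keep Corollary~\ref{c2} as a cross-check.

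For $P=P_e$ we have $P(x)=e$ for every $x$. The left side of Eq.~(\ref{e9}) is therefore $P_e(\{x,y\}_{P_e})=e$, whatever the descendant operation is. The right side is $\{P_e(x),P_e(y)\}=\{e,e\}$. Hence Eq.~(\ref{e9}) holds for all $x,y$ if and only if $\{e,e\}=e$, that is, if and only if $e$ is an idempotent point. This proves both implications at once.

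As a consistency check, choose an origin and write $\mathfrak a=\mathfrak a(\mathfrak g;\kappa,\lambda,s)$, so that $R=0$ and $p=e$. The first three identities in Eqs.~(\ref{e15})--(\ref{e17}) then reduce to $0=0$. The last one reads $0=s-e+\kappa(e)$, which is $\{e,e\}=\kappa(e)+s=e$. Equivalently, Eq.~(\ref{e38}) holds with the same $e$, so $e$ is also an idempotent fixed point of $P_e$, in agreement with Theorem~\ref{t4}.

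There is no real obstacle here. The only point needing care is that $P_e$ is affine, so that Definition~\ref{d1} applies. This holds because constant maps preserve both the heap operation and the affine action, by the axioms $\langle e,e,e\rangle=e$ and $(\alpha,e,e)=e$.
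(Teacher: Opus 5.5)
Your proof is correct and is the paper's argument: both sides of Eq.~(\ref{e9}) are $e$ and $\{e,e\}$, so the identity holds exactly when $e$ is idempotent. Your cross-check via Corollary~\ref{c2} (with $R=0$, $p=e$, only $0=s-e+\kappa(e)$ survives) is accurate, and so is your remark that constant maps are affine; the identity $(\alpha,e,e)=e$ follows from axiom (e) of Definition~\ref{d0} rather than being an axiom itself.
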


\begin{proof}
The two sides of Eq.~(\ref{e9}) are $e$ and $\{e,e\}$, respectively. Hence the affine Rota-Baxter identity holds exactly when $\{e,e\}=e$. Theorem~\ref{t1} shows that the descendant is a Lie affgebra.
\end{proof}

\subsection{Images and affine kernels}\label{u13}

\begin{prop}\label{p8}
Let $P:\mathfrak a\to\mathfrak a$ be an affine map on a Lie affgebra $(\mathfrak a,\{-,-\})$ satisfying Eq.~(\ref{e25}) at weight $\theta$. Then $\operatorname{Im}P$ is a Lie subaffgebra of the original Lie affgebra. In particular, this holds for every affine Rota-Baxter operator in Definition~\ref{d1}.
\end{prop}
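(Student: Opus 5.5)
The argument should be a direct one: bracket two image points and check that the result is again an image point.

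First, $\operatorname{Im}P$ is a nonempty affine subspace of $\mathfrak a$, because $P$ is affine. In coordinates $P=R+p$, it equals $p+\operatorname{Im}R$.

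Now take two points of the image, written $P(x)$ and $P(y)$ with $x,y\in\mathfrak a$. Eq.~(\ref{e25}) gives
$$
\{P(x),P(y)\}=P(\{x,y\}_{P,\theta}).
$$
By Proposition~\ref{p4}, the weighted descendant $\{x,y\}_{P,\theta}$ is a well-defined point of $\mathfrak a$: it is an affine combination whose coefficients sum to one, so it does not depend on the origin. Hence the right-hand side lies in $\operatorname{Im}P$, which gives $\{\operatorname{Im}P,\operatorname{Im}P\}\subseteq\operatorname{Im}P$. By definition, $\operatorname{Im}P$ is then a Lie subaffgebra.

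For the final sentence, Eq.~(\ref{e20}) identifies $\{x,y\}_{P,-1}$ with $\{x,y\}_P$. So Eq.~(\ref{e9}) is exactly Eq.~(\ref{e25}) at $\theta=-1$, and the first part applies to every affine Rota-Baxter operator in Definition~\ref{d1}.

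As a cross-check in coordinates, Proposition~\ref{p1} with $a=p$ and $\mathfrak h=\operatorname{Im}R$ would require four conditions:
\begin{itemize}
\item $\operatorname{Im}R$ is closed under the bracket, which follows from the first identity in Eq.~(\ref{e27});
\item $\kappa(\operatorname{Im}R)\subseteq\operatorname{Im}R$, which follows from $R\kappa_P=\kappa R$;
\item $(\lambda+\operatorname{ad}_p)(\operatorname{Im}R)\subseteq\operatorname{Im}R$, which follows from $R\lambda_P=(\lambda+\operatorname{ad}_p)R$;
\item $\kappa(p)-p+s\in\operatorname{Im}R$, which follows from the constant identity in Eq.~(\ref{e27}).
\end{itemize}
This route is not needed, since the direct argument already suffices.

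The main obstacle is minimal. The only point needing care is that $\{x,y\}_{P,\theta}$ is a genuine point of $\mathfrak a$, and Definition~\ref{d2} guarantees this because its coefficients sum to one.
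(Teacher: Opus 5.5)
Your argument is correct and matches the paper's proof: $\operatorname{Im}P$ is an affine subspace because $P$ is affine, and Eq.~(\ref{e25}) gives $\{P(x),P(y)\}=P(\{x,y\}_{P,\theta})\in\operatorname{Im}P$, with Eq.~(\ref{e20}) covering the case of Definition~\ref{d1}. Your coordinate cross-check via Proposition~\ref{p1} and Eq.~(\ref{e27}) is also valid but, as you say, unnecessary.
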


\begin{proof}
The image of an affine map is an affine subspace. Let $a=P(x)$ and $b=P(y)$ be two points of $\operatorname{Im}P$. By the defining homomorphism identity, $\{a,b\}=\{P(x),P(y)\}=P(\{x,y\}_{P,\theta})$. Hence $\{a,b\}\in\operatorname{Im}P$.
\end{proof}

The fibre over a fixed point requires a separate closure condition.

For the following definition and lemma, let $P:\mathfrak a\to\mathfrak a$ be an affine map satisfying $P^2=P$. Then $\operatorname{Im}P=\operatorname{Fix}P$, where $\operatorname{Fix}P:=\{x\in\mathfrak a\mid P(x)=x\}$.
Let $e\in\operatorname{Im}P$. Thus $P(e)=e$.

\begin{defn}
The \textbf{affine kernel of $P$ at $e$} is $\operatorname{Ker}_eP=P^{-1}(e)$.
The \textbf{complementary affine map at $e$} is $Q_e(x)=\langle x,P(x),e\rangle$.
\end{defn}

The two affine maps satisfy the following projection identities.

\begin{lemma}\label{l3}
Let $P$ be affine and idempotent, and let $e\in\operatorname{Im}P$. Then
\begin{enumerate}[label={\rm(\roman*)}]
\item $Q_e$ is affine and idempotent.
\item $\operatorname{Im}Q_e=\operatorname{Ker}_eP$.
\item $P Q_e=Q_eP=e$, where $e$ on the right denotes the constant map with value $e$.
\item for every $x\in\mathfrak a$,
\begin{equation}\label{e42}
x=\langle P(x),e,Q_e(x)\rangle.
\end{equation}
\item the representation in Eq.~(\ref{e42}) is unique with $P(x)\in\operatorname{Im}P$ and $Q_e(x)\in\operatorname{Ker}_eP$.
\end{enumerate}
\end{lemma}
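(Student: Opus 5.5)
The plan is to work in coordinates at the chosen point $e$. Since $P(e)=e$, taking $e$ as origin in the tangent space $T_e\mathfrak a$ makes $P$ a linear map $R$ with $R^2=R$ (the translation vector is $P(e)-e=0$). The heap operation becomes $\langle x,y,z\rangle=x-y+z$, with $e$ playing the role of $0$. The complementary map then reads $Q_e(x)=x-R(x)=(\operatorname{id}-R)(x)$. Everything reduces to standard facts about a linear projection on a vector space. The only affine content is checking that the origin-free statements are compatible with this choice, which holds because affine maps and the heap operation are origin-independent.

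For (i), first show $Q_e$ is affine. It is a composite of the affine maps $x\mapsto(x,P(x),e)$ and the heap operation, and the heap operation is affine in each variable, with coefficients $1,-1,1$ summing to one. In coordinates this is immediate: $Q_e=\operatorname{id}-R$ is linear on $T_e\mathfrak a$, hence affine. Idempotency follows from $(\operatorname{id}-R)^2=\operatorname{id}-2R+R^2=\operatorname{id}-R$.

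For (iii), $RQ_e=R-R^2=0$ and $Q_eR=R-R^2=0$. The zero map in these coordinates is the constant map with value $e$. For (ii), $\operatorname{Im}(\operatorname{id}-R)=\ker R$ because $R$ is a projection. Moreover $\ker R=P^{-1}(e)=\operatorname{Ker}_eP$, since $P(x)=e$ means $R(x)=0$ in these coordinates. For (iv), $R(x)-0+(x-R(x))=x$, which is exactly $\langle P(x),e,Q_e(x)\rangle$.

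For (v), suppose $x=\langle u,e,v\rangle=u+v$ with $u\in\operatorname{Im}P=\operatorname{Im}R$ and $v\in\operatorname{Ker}_eP=\ker R$. Applying $R$ gives $R(x)=R(u)+R(v)=u$, using $R|_{\operatorname{Im}R}=\operatorname{id}$, which comes from $P^2=P$ and $\operatorname{Im}P=\operatorname{Fix}P$. Hence $u=P(x)$ and $v=x-u=Q_e(x)$. No step is a real obstacle. The only point needing care is the justification that working in $T_e\mathfrak a$ is legitimate, namely that $P$ fixes $e$ and is therefore linear there, and that the constant map $e$ corresponds to $0$.
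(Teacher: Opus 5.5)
Your argument is correct, but it takes a different route from the paper's. You move the origin to $e$ and use the fact from Section~\ref{u5} that an affine map fixing $e$ is linear on $T_e\mathfrak a$. Each item then becomes a standard property of the projection $R$ and its complement $\operatorname{id}-R$.

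The paper never chooses an origin and computes with the heap axioms directly:
\begin{itemize}
\item $P(Q_e(x))=\langle P(x),P^2(x),P(e)\rangle=e$;
\item $Q_e(y)=\langle y,e,e\rangle=y$ for $y\in\operatorname{Ker}_eP$, which gives (ii) and $Q_e^2=Q_e$ together;
\item $P(\langle a,e,b\rangle)=\langle a,e,e\rangle=a$, which gives uniqueness.
\end{itemize}
These are your calculations in disguise. The intrinsic version shows that everything except compatibility of $Q_e$ with the affine action uses only that $P$ is a heap endomorphism with $P^2=P$ and $P(e)=e$, and it fits the paper's origin-free formulation. Your version makes it transparent that the lemma is the affine form of $\mathfrak g=\operatorname{Im}R\oplus\ker R$, which is the viewpoint the paper adopts afterwards in Theorems~\ref{t5} and~\ref{t6}.

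Two small points. First, ``the heap operation is affine in each variable'' would not by itself make $x\mapsto\langle x,P(x),e\rangle$ affine, since composing a separately affine map with $x\mapsto(x,P(x),e)$ can produce quadratic terms. What you need is that the heap operation is jointly affine, and your coefficient count and the formula $Q_e=\operatorname{id}-R$ supply that. Second, you should state explicitly that $P(e)=e$ follows from $e\in\operatorname{Im}P$ and $P^2=P$.
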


\begin{proof}
The map $Q_e$ is affine because it is the affine combination $x-P(x)+e$. Since $P$ preserves heap operations,
$$
P(Q_e(x)) =P(\langle x,P(x),e\rangle) =\langle P(x),P^2(x),P(e)\rangle =\langle P(x),P(x),e\rangle=e.
$$
Hence $\operatorname{Im}Q_e\subseteq\operatorname{Ker}_eP$. If $y\in\operatorname{Ker}_eP$, then $Q_e(y)=\langle y,e,e\rangle=y$, so equality holds and $Q_e^2=Q_e$. Moreover, $Q_e(P(x))=\langle P(x),P^2(x),e\rangle=e$.
This proves (i)-(iii).

The heap identities give $\langle P(x),e,Q_e(x)\rangle=\langle P(x),e,\langle x,P(x),e\rangle\rangle=x$, which proves (iv).

For uniqueness, suppose $x=\langle a,e,b\rangle$, where $a\in\operatorname{Im}P$ and $b\in\operatorname{Ker}_eP$. Then $P(x)=\langle P(a),P(e),P(b)\rangle=\langle a,e,e\rangle=a$, and hence $Q_e(x)=\langle x,P(x),e\rangle=b$.
\end{proof}

Lemma~\ref{l3} gives a decomposition into affine subspaces. Their closure under the Lie affgebra bracket requires further conditions.

\begin{prop}\label{p9}
Let $P:\mathfrak a\to\mathfrak a$ be an affine map such that $\{-,-\}_{P,\theta}$ is a Lie affgebra bracket and $P:(\mathfrak a,\{-,-\}_{P,\theta})\to(\mathfrak a,\{-,-\})$ is a homomorphism. If $e\in\operatorname{Im}P$ satisfies $\{e,e\}=e$, then $P^{-1}(e)$ is a Lie subaffgebra of the descendant Lie affgebra. In particular, for an idempotent affine Rota-Baxter operator the affine kernel at an idempotent fixed point is a Lie subaffgebra of $(\mathfrak a,\{-,-\}_P)$.
\end{prop}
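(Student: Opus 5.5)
The argument is short: the fibre $P^{-1}(e)$ is nonempty (it contains $e$, since $e\in\operatorname{Im}P$ and we may assume $P(e)=e$; for a general affine $P$ the hypothesis $e\in\operatorname{Im}P$ at least gives nonemptiness) and, as the preimage of a point under an affine map, it is an affine subspace. It remains to check closure under $\{-,-\}_{P,\theta}$. The plan is to use the homomorphism property directly. For $x,y\in P^{-1}(e)$,
$$
P(\{x,y\}_{P,\theta})=\{P(x),P(y)\}=\{e,e\}=e,
$$
so $\{x,y\}_{P,\theta}\in P^{-1}(e)$. This closure argument needs neither the Jacobi identity nor idempotency of $P$; it uses only the homomorphism identity Eq.~(\ref{e25}) and the idempotency $\{e,e\}=e$ of the point.

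Concretely, I will proceed in three steps. First, observe that $P^{-1}(e)=e'+\ker\vec P$ for any $e'$ with $P(e')=e$, which exists because $e\in\operatorname{Im}P$; hence it is a nonempty affine subspace. Second, apply the homomorphism identity as displayed to obtain closure. Third, note that by hypothesis $\{-,-\}_{P,\theta}$ is a Lie affgebra bracket, so a nonempty bracket-closed affine subspace is a Lie subaffgebra of $(\mathfrak a,\{-,-\}_{P,\theta})$ by definition.

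For the final sentence, take $\theta=-1$ and use Eq.~(\ref{e20}) to identify $\{-,-\}_{P,-1}$ with $\{-,-\}_P$. Theorem~\ref{t1} supplies both hypotheses, namely that the descendant is a Lie affgebra and that $P$ is a homomorphism. An idempotent fixed point $e$ satisfies $P(e)=e$, so $e\in\operatorname{Im}P$, and $\{e,e\}=e$; the general statement then applies. There is no real obstacle. The only point needing care is that $P^{-1}(e)$ is nonempty, which the hypothesis $e\in\operatorname{Im}P$ guarantees; for idempotent $P$ this is the same as $P(e)=e$.
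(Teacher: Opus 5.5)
Your proposal is correct and follows the paper's own argument. The paper notes that $\{e\}$ is a Lie subaffgebra because $\{e,e\}=e$ and takes its nonempty preimage under the homomorphism $P$; your computation $P(\{x,y\}_{P,\theta})=\{e,e\}=e$ is the same argument written out. Drop the aside ``we may assume $P(e)=e$'': it is not implied by $e\in\operatorname{Im}P$ when $P$ is not idempotent, and your step with a preimage $e'$ already gives nonemptiness.
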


\begin{proof}
The singleton $\{e\}$ is a Lie subaffgebra of the original Lie affgebra because $\{e,e\}=e$. Its nonempty inverse image under the homomorphism $P$ is an affine subspace closed under the descendant bracket.
\end{proof}

The corresponding statement for the original bracket is false.

\begin{exam}\label{x3}
Let $\mathfrak g$ be the abelian Lie algebra of dimension 2 with basis $u,v$. Define $\kappa(u)=0$, $\kappa(v)=u$, $\lambda=0$ and $s=0$.
Since $\mathfrak g$ is abelian, Eq.~(\ref{e1}) holds automatically, and $\{x,y\}=\kappa(x)$ defines a Lie affgebra.

Let $R(u)=u$ and $R(v)=0$.
Then $R^2=R$. Since the tangent Lie algebra is abelian, $R$ is a Rota-Baxter operator of weight $-1$. Moreover, $\kappa R=0$ gives the second identity in Eq.~(\ref{e15}), while $\lambda=0$ and $s=p=0$ give Eq.~(\ref{e17}). Hence $R$, regarded as an affine map, is an idempotent affine Rota-Baxter operator.

Its image is $\mathbf{k}u$, which is a Lie subaffgebra. However, $\operatorname{Ker}_0R=\mathbf{k}v$ is not a Lie subaffgebra of the original Lie affgebra, because $\{v,v\}=\kappa(v)=u\notin\mathbf{k}v$.
Thus the affine Rota-Baxter identity and idempotency alone do not imply affine factorization into two Lie subaffgebras.
\end{exam}

\subsection{Splitting and affine factorization}\label{u14}

We now characterize closure of the affine kernel using coordinates at an idempotent fixed point.

Let $P$ be an idempotent affine Rota-Baxter operator, and let $e\in\operatorname{Im}P$ be an idempotent point. Choose $e$ as the origin. Since $P(e)=e$, the map $P$ becomes a linear projection $R:T_e\mathfrak a\to T_e\mathfrak a$.
Since $e$ is idempotent, the constant term in the standard description of the Lie affgebra at $e$ vanishes. Thus $\mathfrak a=\mathfrak a(\mathfrak g;\kappa_e,\lambda_e,0)$ with $\mathfrak g=T_e\mathfrak a$.

Set $\mathfrak g_+=\operatorname{Im}R$ and $\mathfrak g_-=\ker R$.
Since $R$ is an idempotent Rota-Baxter operator of weight $-1$, Proposition~\ref{p2} gives $\mathfrak g=\mathfrak g_+\oplus\mathfrak g_-$ with both summands Lie subalgebras.

\begin{theorem}\label{t5}
With the notation above, the following statements are equivalent:
\begin{enumerate}[label={\rm(\roman*)}]
\item $\operatorname{Ker}_eP$ is a Lie subaffgebra of the original Lie affgebra $\mathfrak a$.
\item
$$
\kappa_e(\mathfrak g_-)
\subseteq
\mathfrak g_-,
\quad
\lambda_e(\mathfrak g_-)
\subseteq
\mathfrak g_-.
$$
\item
$$
R\kappa_e=\kappa_eR,
\quad
R\lambda_e=\lambda_eR.
$$
\end{enumerate}
Whenever these conditions hold, $\operatorname{Im}P$ and $\operatorname{Ker}_eP$ are both Lie subaffgebras of the original Lie affgebra.
\end{theorem}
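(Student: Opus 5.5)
With $e$ chosen as origin we have $s_e=0$, and $P$ becomes the linear projection $R$ with $p_e=0$. The affine kernel $\operatorname{Ker}_eP=P^{-1}(e)$ is then the vector subspace $\mathfrak g_-=\ker R$ through the origin, and $\operatorname{Im}P$ is $\mathfrak g_+$. The plan is to reduce everything to Corollary~\ref{c1} and the coordinate identities of Corollary~\ref{c2}, specialized to $p=0$ and $s=0$:
$$
R\kappa_eR=\kappa_eR,\qquad R\lambda_eR=\lambda_eR.
$$
The second identity comes from $R(\lambda_eR+\operatorname{ad}_0)=(\lambda_e+\operatorname{ad}_0)R$. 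The constant identity in Eq.~(\ref{e17}) is trivially $0=0$. Together these identities say that $\kappa_e(\mathfrak g_+)\subseteq\mathfrak g_+$ and $\lambda_e(\mathfrak g_+)\subseteq\mathfrak g_+$. This is the sense in which both structural endomorphisms automatically preserve the image.

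For (i)$\Leftrightarrow$(ii), I apply Corollary~\ref{c1} to $\mathfrak h=\mathfrak g_-$. It says $\mathfrak g_-$ is a Lie subaffgebra if and only if $[\mathfrak g_-,\mathfrak g_-]\subseteq\mathfrak g_-$, $\kappa_e(\mathfrak g_-)\subseteq\mathfrak g_-$ and $\lambda_e(\mathfrak g_-)\subseteq\mathfrak g_-$. The first condition always holds: $R$ is an idempotent Rota-Baxter operator of weight $-1$ by Theorem~\ref{t1}, so Proposition~\ref{p2} applies. Hence (i) is equivalent to (ii).

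For (ii)$\Leftrightarrow$(iii), I use the general fact that for a projection $R$ and an endomorphism $D$ preserving $\operatorname{Im}R$, the condition $RD=DR$ is equivalent to $D(\ker R)\subseteq\ker R$. Suppose $D$ preserves $\ker R$, and write $x=x_++x_-$. Then $RD(x)=RD(x_+)+RD(x_-)=D(x_+)+0=DR(x)$, using $D(x_+)\in\mathfrak g_+$. Conversely, if $RD=DR$ and $x\in\ker R$, then $RD(x)=DR(x)=0$. I apply this with $D=\kappa_e$ and $D=\lambda_e$, which preserve $\mathfrak g_+$ by the first paragraph.

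For the final assertion, $\operatorname{Im}P$ is always a Lie subaffgebra by Proposition~\ref{p8}, and $\operatorname{Ker}_eP$ is one by (i). The only real point is extracting $\kappa_e(\mathfrak g_+)\subseteq\mathfrak g_+$ and $\lambda_e(\mathfrak g_+)\subseteq\mathfrak g_+$ from Corollary~\ref{c2} at this origin. This uses $p_e=0$, so that $\operatorname{ad}_p$ disappears; that is exactly why the choice of an idempotent fixed point as origin matters. The rest is routine linear algebra.
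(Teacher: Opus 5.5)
Your proof is correct and follows the same approach as the paper. You get (i)$\Leftrightarrow$(ii) from the subaffgebra criterion at the idempotent fixed point, and (ii)$\Leftrightarrow$(iii) from the automatic invariance of $\mathfrak g_+$ together with the projection argument; the only cosmetic difference is that you read off $\kappa_e(\mathfrak g_+)\subseteq\mathfrak g_+$ and $\lambda_e(\mathfrak g_+)\subseteq\mathfrak g_+$ from the identities $R\kappa_eR=\kappa_eR$ and $R\lambda_eR=\lambda_eR$ of Corollary~\ref{c2}, whereas the paper obtains them from Proposition~\ref{p8} by evaluating the bracket on the diagonal and at the origin.
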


\begin{proof}
With $e$ as origin, we have $\operatorname{Im}P=\mathfrak g_+$ and $\operatorname{Ker}_eP=\mathfrak g_-$. Both are Lie subalgebras of $(\mathfrak g,[-,-])$ by Proposition~\ref{p2}.

Suppose (i) holds. For $x\in\mathfrak g_-$, closure under the affine bracket gives
$$
\kappa_e(x)=\{x,x\}\in\mathfrak g_-,
\quad \lambda_e(x)=\{0,x\}\in\mathfrak g_-
\quad\text{(by Eq.~(\ref{e2}) with $s=0$)}.
$$
This proves (ii). Conversely, if (ii) holds and $x,y\in\mathfrak g_-$, then
$$
\{x,y\}=[x,y]+\kappa_e(x)+\lambda_e(y-x)\in\mathfrak g_-,
$$
because all three terms lie in $\mathfrak g_-$. Thus (i) and (ii) are equivalent.

By Proposition~\ref{p8}, $\mathfrak g_+$ is a Lie subaffgebra. Applying the same diagonal and zero-argument evaluations to $\mathfrak g_+$ shows that $\kappa_e$ and $\lambda_e$ preserve it. Therefore $R\kappa_eR=\kappa_eR$ and $R\lambda_eR=\lambda_eR$. Under (ii), the maps also preserve $\ker R$, so
$$
R\kappa_e(\operatorname{id}-R)=0,\quad R\lambda_e(\operatorname{id}-R)=0.
$$
Consequently,
\begin{align*}
R\kappa_e
&=R\kappa_eR+R\kappa_e(\operatorname{id}-R)=\kappa_eR,\\
R\lambda_e
&=R\lambda_eR+R\lambda_e(\operatorname{id}-R)=\lambda_eR.
\end{align*}
This proves (iii). If (iii) holds and $x\in\ker R$, then
$$
R\kappa_e(x)=\kappa_eR(x)=0,\quad
R\lambda_e(x)=\lambda_eR(x)=0,
$$
so both endomorphisms preserve $\mathfrak g_-$, proving (ii). The final assertion follows from Proposition~\ref{p8} and (i).
\end{proof}

\begin{defn}\label{d5}
Let $P:\mathfrak a\to\mathfrak a$ be an idempotent affine Rota-Baxter operator and let $e$ be an idempotent fixed point of $P$. We say that $P$ is \textbf{split at $e$} if the fibre $P^{-1}(e)$ is a Lie subaffgebra of the original Lie affgebra $\mathfrak a$.
\end{defn}

By Theorem~\ref{t5}, this is equivalent to the two commutation relations stated there. Both affine factors provided by Lemma~\ref{l3} are then Lie subaffgebras of the original Lie affgebra.

\begin{defn}\label{d6}
Let $\mathfrak a_+$ and $\mathfrak a_-$ be affine subspaces of an affine space $\mathfrak a$ with $\mathfrak a_+\cap\mathfrak a_- = \{e\}$.
We write $\mathfrak a=\mathfrak a_+\oplus_e\mathfrak a_-$ if the map
$$
\mathfrak a_+\times\mathfrak a_-
\longrightarrow
\mathfrak a,
\quad
(a,b)\longmapsto\langle a,e,b\rangle
$$
is bijective.
If $\mathfrak a$ is a Lie affgebra and both $\mathfrak a_+$ and $\mathfrak a_-$ are Lie subaffgebras, we call this an \textbf{ordered affine Lie factorization}.
\end{defn}

If $\mathfrak a_+$ and $\mathfrak a_-$ are Lie subaffgebras and their intersection is $\{e\}$, then necessarily $\{e,e\}\in\mathfrak a_+\cap\mathfrak a_-=\{e\}$.
Hence the intersection point of an affine Lie factorization is automatically idempotent.

We can now establish the correspondence with ordered affine Lie factorizations.

\begin{theorem}\label{t6}
Let $(\mathfrak a,\{-,-\})$ be a Lie affgebra. There is a bijection between the following sets:
\begin{enumerate}[label={\rm(\alph*)}]
\item ordered affine Lie factorizations $\mathfrak a=\mathfrak a_+\oplus_e\mathfrak a_-$.
\item pairs $(P,e)$ in which $P$ is an idempotent affine Rota-Baxter operator, $e\in\operatorname{Im}P$ is idempotent, and $P$ is split at $e$.
\end{enumerate}
Under this correspondence, $\mathfrak a_+=\operatorname{Im}P$ and $\mathfrak a_-=\operatorname{Ker}_eP$, while $P$ is the affine projection onto $\mathfrak a_+$ along $\mathfrak a_-$. Moreover, $Q_e(x)=\langle x,P(x),e\rangle$ is also an idempotent affine Rota-Baxter operator, split at $e$, and corresponds to the reversed factorization $\mathfrak a=\mathfrak a_-\oplus_e\mathfrak a_+$.
\end{theorem}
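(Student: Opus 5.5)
I will construct the two maps explicitly and check that they are mutually inverse.

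From (b) to (a), take a pair $(P,e)$. The map sends it to $(\operatorname{Im}P,\operatorname{Ker}_eP)$. Proposition~\ref{p8} shows that $\operatorname{Im}P$ is a Lie subaffgebra, and splitting at $e$ (Definition~\ref{d5}) shows that $\operatorname{Ker}_eP$ is one. Lemma~\ref{l3}(iv),(v) gives bijectivity of $(a,b)\mapsto\langle a,e,b\rangle$. For the intersection, a point $x$ in both factors satisfies $x=P(x)=e$, so the intersection is $\{e\}$. Hence we obtain an ordered affine Lie factorization.

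From (a) to (b), start from $\mathfrak a=\mathfrak a_+\oplus_e\mathfrak a_-$. Define $P(x)=a$, where $x=\langle a,e,b\rangle$ is the unique decomposition. The remark after Definition~\ref{d6} shows that $e$ is idempotent. I will then choose $e$ as origin, so that by Remark~\ref{r2}-type reasoning (or Corollary~\ref{c6}) the structure is $\mathfrak a(\mathfrak g;\kappa,\lambda,0)$. In these coordinates $\mathfrak a_\pm$ become vector subspaces $\mathfrak g_\pm$ with $\mathfrak g=\mathfrak g_+\oplus\mathfrak g_-$, and $P=R$ is the linear projection onto $\mathfrak g_+$ along $\mathfrak g_-$. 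In particular $P$ is affine, idempotent, and fixes $e$.

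By Corollary~\ref{c1}, both $\mathfrak g_\pm$ are Lie subalgebras stable under $\kappa$ and $\lambda$. Proposition~\ref{p2} then makes $R$ Rota-Baxter of weight $-1$. Stability of both summands gives $R\kappa=\kappa R$ and $R\lambda=\lambda R$. It remains to verify Corollary~\ref{c2} with $p=s=0$:
\begin{itemize}
\item $R\kappa R=\kappa R^2=\kappa R$;
\item $R\lambda R=\lambda R$;
\item the constant identity reads $0=0$.
\end{itemize}
So $P$ is an idempotent affine Rota-Baxter operator, and it is split at $e$ because $P^{-1}(e)=\mathfrak g_-$.

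For the composites, Lemma~\ref{l3}(v) shows that an idempotent $P$ is the projection onto $\operatorname{Im}P$ along $\operatorname{Ker}_eP$, and $e$ is recovered as the intersection point. Conversely, the projection associated with a factorization has image $\mathfrak a_+$ and fibre $\mathfrak a_-$ over $e$. Hence the two maps are mutually inverse.

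For the statement about $Q_e$: by Lemma~\ref{l3}, $Q_e$ is idempotent and $\operatorname{Im}Q_e=\operatorname{Ker}_eP$. Its fibre over $e$ is $\operatorname{Im}P$, since $Q_e(x)=e$ iff $x=P(x)$. Moreover $x=\langle Q_e(x),e,P(x)\rangle$, by commutativity of the heap. So $Q_e$ is exactly the projection attached to the reversed factorization $\mathfrak a_-\oplus_e\mathfrak a_+$. Applying the (a)$\to$(b) construction to that factorization shows that $Q_e$ is an idempotent affine Rota-Baxter operator split at $e$. In coordinates this is simply $\operatorname{id}-R$.

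The main obstacle I expect is checking the affine Rota-Baxter identity in the (a)$\to$(b) direction. The Lie-algebra part (Proposition~\ref{p2}) does not handle $\kappa,\lambda$ and the constants. The plan is to reduce everything, via the origin $e$ and Corollary~\ref{c2}, to the commutation relations supplied by Corollary~\ref{c1}, as outlined above. A smaller point is confirming that the projection defined by a factorization is affine; this follows from the coordinate description.
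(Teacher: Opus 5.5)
Your proposal is correct and follows the paper's proof essentially step for step: (b)$\to$(a) via Proposition~\ref{p8}, the splitting hypothesis and Lemma~\ref{l3}, and (a)$\to$(b) by taking the idempotent intersection point $e$ as origin. There you use Corollary~\ref{c1} and Proposition~\ref{p2} to get the weight $-1$ identity together with $R\kappa=\kappa R$ and $R\lambda=\lambda R$, and you then recognize $Q_e$ as the projection of the reversed factorization. The differences are only cosmetic: you verify the affine identity through the coordinate conditions of Corollary~\ref{c2} rather than by substituting into Eq.~(\ref{e10}), and you identify $Q_e$ intrinsically via heap symmetry rather than in coordinates.
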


\begin{proof}
Start with a pair $(P,e)$ in (b). Its image is a Lie subaffgebra by Proposition~\ref{p8}, and its fibre over $e$ is a Lie subaffgebra because $P$ is split at $e$. If $a$ belongs to both factors, then $P(a)=a$ and $P(a)=e$, so $a=e$. Lemma~\ref{l3} gives the unique decomposition
$$
x=\langle P(x),e,Q_e(x)\rangle,
\quad P(x)\in\operatorname{Im}P,\quad Q_e(x)\in\operatorname{Ker}_eP.
$$
Hence these factors give an ordered affine Lie factorization.

Conversely, let $\mathfrak a=\mathfrak a_+\oplus_e\mathfrak a_-$ be an ordered affine Lie factorization. Since both factors contain $e$ and are closed under the bracket,
$$
\{e,e\}\in\mathfrak a_+\cap\mathfrak a_-=\{e\}.
$$
Choose $e$ as origin and write $\mathfrak a=\mathfrak a(\mathfrak g;\kappa,\lambda,0)$, where $\mathfrak g=T_e\mathfrak a$. The two factors become vector subspaces $\mathfrak g_+,\mathfrak g_-$ with $\mathfrak g=\mathfrak g_+\oplus\mathfrak g_-$. By Corollary~\ref{c1}, both are Lie subalgebras and are preserved by $\kappa$ and $\lambda$.

Let $R$ be the projection onto $\mathfrak g_+$ along $\mathfrak g_-$. For $x=x_++x_-$ and $y=y_++y_-$, with $x_\pm,y_\pm\in\mathfrak g_\pm$, we have
\begin{align*}
R([R(x),y]+[x,R(y)]-[x,y])
=R([x_+,y_+]-[x_-,y_-])
=[x_+,y_+]
=[R(x),R(y)].
\end{align*}
Here $R$ fixes $\mathfrak g_+$ and vanishes on $\mathfrak g_-$, and both summands are Lie subalgebras. Preservation of the summands also gives
$$
R\kappa(x_++x_-)=\kappa(x_+)=\kappa R(x_++x_-),
\quad R\lambda(x_++x_-)=\lambda(x_+)=\lambda R(x_++x_-).
$$
Thus $R^2=R$, $R\kappa R=\kappa R$ and $R\lambda R=\lambda R$.

Regard $R$ as the affine projection $P$ fixing $e$. Its affine Rota-Baxter identity can now be checked directly:
\begin{align*}
P(\{x,y\}_P)
&=R([x,y]_{R,-1})+R\kappa R(x)+R\lambda R(y-x)
\quad\text{(by Eq.~(\ref{e10}))}\\
&=[R(x),R(y)]+\kappa R(x)+\lambda(R(y)-R(x))\\
&=\{P(x),P(y)\}
\quad\text{(by Eq.~(\ref{e2}) with $s=p=0$)}.
\end{align*}
Moreover, $P^2=P$, $P(e)=e$, $\operatorname{Im}P=\mathfrak a_+$ and $\operatorname{Ker}_eP=\mathfrak a_-$. In particular, $P$ is split at $e$, so $(P,e)$ belongs to (b).

The projection associated with the factorization sends the unique expression $\langle a,e,b\rangle$ to $a$. Starting with $(P,e)$, the decomposition from Lemma~\ref{l3} therefore recovers $P$ itself. Starting with a factorization, the image and fibre of its projection recover its two factors. This proves that the constructions are inverse.

Finally, in the coordinates above,
$$
Q_e(x_++x_-)=x_++x_--R(x_++x_-)=x_-.
$$
It is the projection for the reversed factorization. Applying the construction just proved to that ordered factorization shows that $Q_e$ is an idempotent affine Rota-Baxter operator split at $e$.
\end{proof}

\begin{remark}
Theorem~\ref{t6} extends Proposition~\ref{p2} to affine spaces. The intersection point $e$ of the two factors determines the complementary projection $Q_e(x)=\langle x,P(x),e\rangle$. Choosing $e$ as origin gives the usual formula $\operatorname{id}-R$.
\end{remark}

\subsection{Affine doubles at nonzero weight}\label{u15}

Fix a nonzero weight. We first decompose the product affine space into a graph and the diagonal, then determine when the graph is closed under the componentwise bracket. Under these conditions, the two subaffgebras give a factorization whose tangent Lie algebra is the Rota-Baxter double of~\cite{LS2023}.

The product $\mathfrak a\times\mathfrak a$ carries the componentwise heap operation
\begin{align*}
\big\langle(a_1,a_2),(b_1,b_2),(c_1,c_2)\big\rangle
&=\big(\langle a_1,b_1,c_1\rangle,
\langle a_2,b_2,c_2\rangle\big).
\end{align*}
The affine action is also taken componentwise. All heap expressions in the product below refer to this operation.

Fix $\theta\in\mathbf{k}^{\times}$, let $\mathfrak a$ be an affine space, let $P:\mathfrak a\to\mathfrak a$ be affine, and fix $e\in\mathfrak a$. Define
$$
P^+_{\theta,e}(x)=\left\langle P(x),e,(\theta,e,x)\right\rangle,
\quad
H_e(x)=\big(P^+_{\theta,e}(x),P(x)\big).
$$
Write $\Gamma_e(P)=\operatorname{Im}H_e$ for the affine graph, $\Delta(\mathfrak a)=\{(x,x)\mid x\in\mathfrak a\}$ for the diagonal affine subspace, and $d_e=(P(e),P(e))$. Throughout this subsection, the fixed weight $\theta$ is suppressed in $H_e$ and $\Gamma_e(P)$.

\begin{prop}\label{p11}
For every affine map $P:\mathfrak a\to\mathfrak a$ and every $\theta\neq0$,
$
\mathfrak a\times\mathfrak a
=
\Gamma_e(P)\oplus_{d_e}\Delta(\mathfrak a)
$
as affine spaces. More precisely, for every $(u,v)\in\mathfrak a\times\mathfrak a$ there are unique $x,y\in\mathfrak a$ such that
\begin{equation}\label{e48}
(u,v)
=
\left\langle
H_e(x),d_e,(y,y)
\right\rangle.
\end{equation}
They are given by
\begin{equation}\label{e49}
x=(\theta^{-1},e,\langle u,v,e\rangle),
\quad
y=\langle v,P(x),P(e)\rangle.
\end{equation}
\end{prop}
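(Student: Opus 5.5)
Choose $e$ as origin and work in $T_e\mathfrak a$. Write $P(x)=R(x)+p$ with $p=P(e)$ and $R$ linear. In these coordinates $(\theta,e,x)=\theta x$, so
$$
P^+_{\theta,e}(x)=R(x)+p+\theta x,\quad H_e(x)=(R(x)+\theta x+p,\ R(x)+p),\quad d_e=(p,p).
$$
Thus $H_e$ is affine, and its image $\Gamma_e(P)$ is the affine subspace $d_e+W$ with
$$
W=\{((R+\theta)x,R(x))\mid x\in T_e\mathfrak a\}.
$$
The diagonal $\Delta(\mathfrak a)$ is $d_e+D$, where $D=\{(y,y)\}$, and it passes through $d_e$ since $d_e=(p,p)$. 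Both subspaces therefore contain $d_e$.

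The heap operation in the product is componentwise $a-b+c$. So Eq.~(\ref{e48}) reads
$$
(u,v)=(R(x)+\theta x+p,\ R(x)+p)-(p,p)+(y,y),
$$
that is,
$$
u=R(x)+\theta x+y,\quad v=R(x)+y.
$$
Subtracting the two equations gives $u-v=\theta x$. Since $\theta\neq0$, this forces $x=\theta^{-1}(u-v)$. Then $y=v-R(x)=v-P(x)+p$, which is uniquely determined. Conversely, these values solve both equations, so existence and uniqueness hold together.

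It remains to translate back into origin-free notation. In coordinates at $e$, we have $\langle u,v,e\rangle=u-v$ and $(\theta^{-1},e,z)=\theta^{-1}z$, which gives the first formula in Eq.~(\ref{e49}). Likewise $\langle v,P(x),P(e)\rangle=v-P(x)+p$, which gives the second.

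Bijectivity of $(a,b)\mapsto\langle a,e',b\rangle$ on $\Gamma_e(P)\times\Delta(\mathfrak a)$, with $e'=d_e$, is exactly this existence and uniqueness. Here we use that $H_e$ is onto $\Gamma_e(P)$ by definition and that $y\mapsto(y,y)$ is onto $\Delta(\mathfrak a)$. For Definition~\ref{d6} we also need $\Gamma_e(P)\cap\Delta(\mathfrak a)=\{d_e\}$. A point lies in both factors exactly when it has a decomposition with $y=p$ and also one with $x=0$, which corresponds to the element $d_e$ of the first factor. By uniqueness the intersection is $\{d_e\}$. Directly, $H_e(x)=(z,z)$ forces $\theta x=0$, hence $x=0$.

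The only delicate point is bookkeeping: $d_e$ must lie in both factors, and the coordinate formulas must be checked to be origin independent. The latter holds because each expression in Eq.~(\ref{e49}) is an affine combination, so it may be verified in any single coordinate system.
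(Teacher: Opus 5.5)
Your proof is correct and follows essentially the same route as the paper. You take $e$ as origin and reduce the heap equation to $u=(R+\theta\operatorname{id})(x)+y$ and $v=R(x)+y$; subtracting gives $x=\theta^{-1}(u-v)$, and $H_e(x)\in\Delta(\mathfrak a)$ forces $\theta x=0$, so the intersection is $\{d_e\}$. Your remarks on origin independence and on surjectivity onto the two factors make explicit what the paper leaves implicit.
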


\begin{proof}
Choose $e$ as origin. Write $P(x)=R(x)+p$, where $p=P(e)$ in these coordinates. Then
$$
P^+_{\theta,e}(x)=(R+\theta\operatorname{id})(x)+p,
\quad
H_e(x)=\big((R+\theta\operatorname{id})(x)+p,R(x)+p\big).
$$
If $H_e(x)$ belongs to the diagonal, then $\theta x=0$, so $x=0=e$ and $H_e(x)=d_e$. Thus $\Gamma_e(P)\cap\Delta(\mathfrak a)=\{d_e\}$.

For $(u,v)\in\mathfrak a\times\mathfrak a$, Eq.~(\ref{e48}) becomes
$$
u=(R+\theta\operatorname{id})(x)+y,
\quad
v=R(x)+y.
$$
Subtracting gives $u-v=\theta x$, hence $x=\theta^{-1}(u-v)$. The second equation then gives $y=v-R(x)$. These are exactly the affine formulas in Eq.~(\ref{e49}). The same calculation proves uniqueness.
\end{proof}

Now suppose that $\mathfrak a$ is a Lie affgebra. The product affine space carries the componentwise bracket
\begin{equation}\label{e50}
\{(a_1,a_2),(b_1,b_2)\}_{\times}
=
\big(\{a_1,b_1\},\{a_2,b_2\}\big).
\end{equation}
The following theorem characterizes closure of $\Gamma_e(P)$ under the componentwise bracket.

\begin{theorem}\label{t7}
Let $P:\mathfrak a\to\mathfrak a$ be a tangent Rota-Baxter operator of nonzero weight $\theta$, and fix $e\in\mathfrak a$. Choose $e$ as origin and write $\mathfrak a=\mathfrak a(\mathfrak g;\kappa_e,\lambda_e,s_e)$ and $P(x)=R(x)+p_e$, where $R$ is the linear part of $P$ and $p_e=P(e)$ in these coordinates. Then $\Gamma_e(P)$ is a Lie subaffgebra of $(\mathfrak a\times\mathfrak a,\{-,-\}_{\times})$ if and only if
\begin{equation}\label{e52}
R\kappa_e=\kappa_eR,\quad R(\lambda_e+\operatorname{ad}_{p_e})=(\lambda_e+\operatorname{ad}_{p_e})R, \quad s_e+\kappa_e(p_e)=p_e.
\end{equation}
The last equality is equivalent to the statement that $P(e)$ is an idempotent point of the original Lie affgebra.
\end{theorem}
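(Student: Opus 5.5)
We will work entirely in the coordinates at $e$, as in the proof of Proposition~\ref{p11}. There $H_e(x)=\big((R+\theta\operatorname{id})(x)+p_e,\;R(x)+p_e\big)$, so $\Gamma_e(P)=d_e+W$ with $W=\{((R+\theta\operatorname{id})x,R(x))\mid x\in\mathfrak g\}$. The product Lie affgebra is $\mathfrak a(\mathfrak g\oplus\mathfrak g;\kappa_e\oplus\kappa_e,\lambda_e\oplus\lambda_e,(s_e,s_e))$, where $\mathfrak g\oplus\mathfrak g$ carries the componentwise Lie bracket. We then apply Proposition~\ref{p1} with base point $a=d_e=(p_e,p_e)$ and subspace $\mathfrak h=W$. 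This reduces closure of $\Gamma_e(P)$ to four conditions:
\begin{enumerate}[label={\rm(\arabic*)}]
\item $W$ is a Lie subalgebra of $\mathfrak g\oplus\mathfrak g$;
\item $(\kappa_e\oplus\kappa_e)(W)\subseteq W$;
\item $(\lambda_e+\operatorname{ad}_{p_e})^{\oplus2}(W)\subseteq W$;
\item $\big(\kappa_e(p_e)-p_e+s_e,\;\kappa_e(p_e)-p_e+s_e\big)\in W$.
\end{enumerate}

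The main tool is a membership test for $W$. Since $\theta\neq0$, a pair $(u,v)$ lies in $W$ if and only if $v=R(\theta^{-1}(u-v))$, equivalently $\theta v=R(u-v)$; the witness is $x=\theta^{-1}(u-v)$. With this test each condition becomes explicit.

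For condition (4), a diagonal element $(c,c)$ has $u-v=0$, so it lies in $W$ exactly when $\theta c=0$, that is, $c=0$. This gives $s_e+\kappa_e(p_e)=p_e$. Since $\{p_e,p_e\}=\kappa_e(p_e)+s_e$ in these coordinates, this is exactly the statement that $P(e)$ is idempotent.

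For condition (2), take an arbitrary element $((R+\theta)x,Rx)$ of $W$. Applying $\kappa_e$ componentwise gives the pair $(\kappa_e(R+\theta)x,\kappa_eRx)$, whose difference of components is $\theta\kappa_e x$. The test therefore demands $\kappa_eRx=R\kappa_e x$ for all $x$, that is, $R\kappa_e=\kappa_eR$.

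Condition (3) is handled identically with $\kappa_e$ replaced by $\lambda_e+\operatorname{ad}_{p_e}$, which yields $R(\lambda_e+\operatorname{ad}_{p_e})=(\lambda_e+\operatorname{ad}_{p_e})R$.

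Condition (1) is automatic. This is the standard fact that $x\mapsto((R+\theta)x,Rx)$ is a Lie homomorphism from $(\mathfrak g,\theta^{-1}[-,-]_{R,\theta})$ to $\mathfrak g\oplus\mathfrak g$ whenever $R$ is Rota-Baxter of weight $\theta$, the hypothesis here being the tangent Rota-Baxter condition (Proposition~\ref{p15}). To check it directly, compute the commutator of two elements of $W$: it is $([(R+\theta)x,(R+\theta)y],[Rx,Ry])$. The difference of its components is $\theta[x,y]_{R,\theta}$. By Eq.~(\ref{e6}), $[Rx,Ry]=R([x,y]_{R,\theta})$, which is exactly what the test requires.

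Assembling these, the four conditions are equivalent to Eq.~(\ref{e52}). The step needing most care is the bookkeeping in the product structure: one must confirm that the product Lie affgebra at origin $(e,e)$ really has structural endomorphisms $\kappa_e\oplus\kappa_e$, $\lambda_e\oplus\lambda_e$ and constant term $(s_e,s_e)$, and that Proposition~\ref{p1} at $a=(p_e,p_e)$ produces $\operatorname{ad}_{(p_e,p_e)}=\operatorname{ad}_{p_e}\oplus\operatorname{ad}_{p_e}$. Both follow immediately from Eq.~(\ref{e50}) and Eq.~(\ref{e2}). Beyond that, the only potential pitfall is sign or coefficient errors in the membership test.
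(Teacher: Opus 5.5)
Your proof is correct, but it is organized differently from the paper's. The paper never invokes Proposition~\ref{p1}. It computes $\{P^+_{\theta,e}(x),P^+_{\theta,e}(y)\}-\{P(x),P(y)\}$ directly (Eq.~(\ref{e53})). Since the two components of $H_e(z)$ differ by $\theta z$, this reduces graph closure to the single identity $P(\{x,y\}^{\diamond}_{P,e})=\{P(x),P(y)\}$. The paper then separates constant, linear and bilinear parts by evaluating at $x=y=0$, $y=x$ and $x=0$, and the Rota-Baxter identity cancels the bilinear part.

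You use the same key observation: your membership test $\theta v=R(u-v)$ is exactly the injectivity of $H_e$ read off from the component difference. However, you let Proposition~\ref{p1}, applied to the product at $a=d_e$, split the problem into pieces. This makes two things transparent: the Rota-Baxter hypothesis enters only through the subalgebra condition (1), and the three identities in Eq.~(\ref{e52}) are precisely the $\kappa$-invariance, $(\lambda+\operatorname{ad}_a)$-invariance and constant-term conditions of that criterion. The paper's direct route, in exchange, produces the transported bracket $\{-,-\}^{\diamond}_{P,e}$ of Eq.~(\ref{e54}) explicitly, which Corollary~\ref{c8} and Theorem~\ref{t8} use next. Your route can still recover Eq.~(\ref{e56}), from the isomorphism clause of Proposition~\ref{p1}, by pulling back along $G(x)=((R+\theta\operatorname{id})x,R(x))$.

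There is one slip to fix. $G$ is a Lie homomorphism from $(\mathfrak g,[-,-]_{R,\theta})$, not from $(\mathfrak g,\theta^{-1}[-,-]_{R,\theta})$. Your own direct check shows this: the component difference $\theta[x,y]_{R,\theta}$ gives the witness $[x,y]_{R,\theta}$. With the extra factor $\theta^{-1}$, the second components would read $\theta^{-1}[R(x),R(y)]$ versus $[R(x),R(y)]$. This does not affect your proof, because the direct verification is what establishes condition (1). It would matter, though, if you used $G$ to identify the transported structure with Eq.~(\ref{e56}).
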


\begin{proof}
With $e$ as origin, put $p_e=P(e)$ and let $R$ be the linear part of $P$, so that $P(x)=R(x)+p_e$. By assumption, $R$ satisfies Eq.~(\ref{e6}) at weight $\theta$. Since $P^+_{\theta,e}(x)=P(x)+\theta x$ in these coordinates, Eq.~(\ref{e2}) gives
\begin{align*}
&\{P^+_{\theta,e}(x),P^+_{\theta,e}(y)\}-\{P(x),P(y)\}\\
=&\ \theta[P(x),y]+\theta[x,P(y)]+\theta^2[x,y]
+\theta\kappa_e(x)+\theta\lambda_e(y-x)\\
=&\ \theta\bigl([R(x),y]+[x,R(y)]+\theta[x,y]
+[p_e,y]+[x,p_e]+\kappa_e(x)+\lambda_e(y-x)\bigr).
\end{align*}
Using Eq.~(\ref{e7}) and $[p_e,y]+[x,p_e]=\operatorname{ad}_{p_e}(y-x)$, we obtain
\begin{equation}\label{e53}
\{P^+_{\theta,e}(x),P^+_{\theta,e}(y)\}-\{P(x),P(y)\}
=\theta\bigl([x,y]_{R,\theta}+\kappa_e(x)
+(\lambda_e+\operatorname{ad}_{p_e})(y-x)\bigr).
\end{equation}
For every $z$, the difference between the two components of $H_e(z)$ is $\theta z$. Thus $H_e$ is injective, and Eq.~(\ref{e53}) forces the preimage of the product bracket, if it exists, to be
\begin{equation}\label{e54}
\{x,y\}^{\diamond}_{P,e}
=[x,y]_{R,\theta}+\kappa_e(x)+(\lambda_e+\operatorname{ad}_{p_e})(y-x).
\end{equation}
Because Eq.~(\ref{e53}) already gives the required difference of components, both components agree with those of $H_e(\{x,y\}^{\diamond}_{P,e})$ precisely when the second components agree. Hence graph closure is equivalent to
\begin{equation}\label{e55}
P\big(\{x,y\}^{\diamond}_{P,e}\big)=\{P(x),P(y)\}.
\end{equation}

We now compute the difference in Eq.~(\ref{e55}):
\begin{align*}
P\big(\{x,y\}^{\diamond}_{P,e}\big)-\{P(x),P(y)\}
&=R([x,y]_{R,\theta})-[R(x),R(y)]
+(R\kappa_e-\kappa_eR)(x)\\
&\quad+\bigl(R(\lambda_e+\operatorname{ad}_{p_e})
-(\lambda_e+\operatorname{ad}_{p_e})R\bigr)(y-x)
+p_e-s_e-\kappa_e(p_e)\\
&=(R\kappa_e-\kappa_eR)(x)
+\bigl(R(\lambda_e+\operatorname{ad}_{p_e})
-(\lambda_e+\operatorname{ad}_{p_e})R\bigr)(y-x)\\
&\quad+p_e-s_e-\kappa_e(p_e)
\quad\text{(by Eq.~(\ref{e6}))}.
\end{align*}
If this is zero for all $x,y$, setting $x=y=0$ gives $s_e+\kappa_e(p_e)=p_e$. After removing the constant term, setting $y=x$ gives $R\kappa_e=\kappa_eR$, and setting $x=0$ gives the remaining commutation relation. These are exactly Eq.~(\ref{e52}). Conversely, those three identities make the displayed difference zero, proving graph closure. Finally,
$$
\{P(e),P(e)\}=\{p_e,p_e\}=\kappa_e(p_e)+s_e,
$$
so the last condition is equivalent to idempotency of $P(e)$.
\end{proof}

\begin{coro}\label{c8}
Under the equivalent conditions of Theorem~\ref{t7}, the operation in Eq.~(\ref{e54}) is a Lie affgebra bracket on $\mathfrak a$, and $H_e:(\mathfrak a,\{-,-\}^{\diamond}_{P,e})\to\Gamma_e(P)$ is a Lie affgebra isomorphism. With $e$ as origin, this Lie affgebra has the standard form
\begin{equation}\label{e56}
\mathfrak a
\big(
(\mathfrak g,[-,-]_{R,\theta});
\kappa_e,
\lambda_e+\operatorname{ad}_{p_e},
0
\big).
\end{equation}
\end{coro}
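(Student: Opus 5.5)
The plan is to transport the structure from the graph rather than verify the affine Jacobi identity directly. By Theorem~\ref{t7}, $\Gamma_e(P)$ is a Lie subaffgebra of $(\mathfrak a\times\mathfrak a,\{-,-\}_\times)$. The componentwise bracket on a product of Lie affgebras is a Lie affgebra bracket, since both affine identities hold componentwise. Hence $\Gamma_e(P)$, with the restricted bracket, is itself a Lie affgebra.

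Next I will show that $H_e:\mathfrak a\to\Gamma_e(P)$ is an affine bijection. It is affine because both of its components $P^+_{\theta,e}$ and $P$ are affine. It is surjective by definition of $\Gamma_e(P)$. It is injective because the difference of its two components is $\theta z$ in coordinates at $e$ and $\theta\neq0$, as noted in the proof of Theorem~\ref{t7}.

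The proof of Theorem~\ref{t7} also shows that, under Eq.~(\ref{e52}), we have
$$
\{H_e(x),H_e(y)\}_\times=H_e\big(\{x,y\}^{\diamond}_{P,e}\big).
$$
The first-component difference is Eq.~(\ref{e53}), and the second components agree by Eq.~(\ref{e55}). Thus $\{-,-\}^{\diamond}_{P,e}$ is exactly the bracket transported from $\Gamma_e(P)$ along the affine bijection $H_e$. A transported Lie affgebra bracket is again biaffine and satisfies both affine identities, because $H_e^{-1}$ preserves heap operations. So $\{-,-\}^{\diamond}_{P,e}$ is a Lie affgebra bracket, and $H_e$ is an isomorphism by construction.

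For the standard form, I read off the coefficients from Eq.~(\ref{e54}) with $e$ as origin. The bilinear part is $[-,-]_{R,\theta}$, which is a Lie bracket since $R$ is Rota-Baxter of weight $\theta$. The diagonal gives $\kappa_e$, the coefficient of $y-x$ is $\lambda_e+\operatorname{ad}_{p_e}$, and the constant term is $0$. One could also check Eq.~(\ref{e1}) for these data with respect to $[-,-]_{R,\theta}$ directly, but the transport argument makes this unnecessary.

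The only delicate point is that the isomorphism identity must hold for all $x,y$, not merely the existence of some preimage. This is exactly what the graph-closure computation in Theorem~\ref{t7} provides, so no separate Jacobi verification is needed.
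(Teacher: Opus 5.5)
Your proposal is correct and follows the paper's own proof. Both transport the product bracket from the Lie subaffgebra $\Gamma_e(P)$ along the affine bijection $H_e$, use the graph-closure computation (Eqs.~(\ref{e53}) and~(\ref{e55})) to identify the transported bracket with Eq.~(\ref{e54}), and read the standard form off that formula. You simply spell out in more detail the injectivity of $H_e$, the identity $\{H_e(x),H_e(y)\}_\times=H_e(\{x,y\}^{\diamond}_{P,e})$, and the extraction of the coefficients.
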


\begin{proof}
The map $H_e$ is an affine bijection from $\mathfrak a$ onto $\Gamma_e(P)$. By Theorem~\ref{t7}, the latter is a Lie subaffgebra of the product Lie affgebra. Transporting the bracket along $H_e$ gives a Lie affgebra bracket on $\mathfrak a$, and the computation in Eq.~(\ref{e53}) shows that it is given by Eq.~(\ref{e54}). Eq.~(\ref{e56}) follows from the same computation.
\end{proof}

The bracket $\{-,-\}^{\diamond}_{P,e}$ induced by $H_e$ need not equal the weighted descendant $\{-,-\}_{P,\theta}$ from Definition~\ref{d2}. For example, on the canonical Lie affgebra with bracket $\{x,y\}=[x,y]+y$, take $P(x)=0$, $e=0$ and $\theta=-1$. The conditions of Theorem~\ref{t7} hold, but
$$
\{x,y\}^{\diamond}_{P,0}=-[x,y]+y,
\quad
\{x,y\}_{P,-1}=-[x,y].
$$
The first factor in the double constructed below carries the bracket $\{-,-\}^{\diamond}_{P,e}$.

\begin{theorem}\label{t8}
Assume the hypotheses and equivalent conditions of Theorem~\ref{t7}. Define
$$
\Phi_e:\mathfrak a\times\mathfrak a\longrightarrow\mathfrak a\times\mathfrak a,
\quad
\Phi_e(x,y)
=
\left\langle
H_e(x),d_e,(y,y)
\right\rangle.
$$
Then $\Phi_e$ is an affine isomorphism. Pulling back the componentwise bracket in Eq.~(\ref{e50}) along $\Phi_e$ gives a Lie affgebra bracket on $\mathfrak a\times\mathfrak a$ for which $\mathfrak a\times\{P(e)\}$ and $\{e\}\times\mathfrak a$ are Lie subaffgebras. The first is isomorphic to $(\mathfrak a,\{-,-\}^{\diamond}_{P,e})$ and the second to the original Lie affgebra $\mathfrak a$. We call the resulting Lie affgebra on $\mathfrak a\times\mathfrak a$ the \textup{\textbf{affine double}} associated with $P$, $e$ and $\theta$. As a Lie affgebra, it is isomorphic to the original product by $\Phi_e$. The specified graph and diagonal factors are part of this construction.

Identifying the tangent spaces at $(e,P(e))$ and $d_e$ with $\mathfrak g\oplus\mathfrak g$ by translation, the linear part $\vec\Phi_e$ is
\begin{equation}\label{e58}
\vec\Phi_e(\xi,x)
=
\big((R+\theta\operatorname{id})(\xi)+x,\,R(\xi)+x\big).
\end{equation}
Here $\xi,x\in\mathfrak g$, and $\vec\Phi_e$ denotes the linear part in the sense of Section~\ref{u5}.
\end{theorem}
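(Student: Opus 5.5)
Throughout I work with $e$ as origin, as in Theorem~\ref{t7}. Then $P(x)=R(x)+p_e$, $H_e(x)=\big((R+\theta\operatorname{id})(x)+p_e,\,R(x)+p_e\big)$ and $d_e=(p_e,p_e)$. Since the heap operation in the product is $a-b+c$ componentwise, the map becomes
$$
\Phi_e(x,y)=\big((R+\theta\operatorname{id})(x)+y,\;R(x)+y\big).
$$
This is a linear map of $\mathfrak g\oplus\mathfrak g$, so $\Phi_e$ is affine. Proposition~\ref{p11} shows that it is bijective: for each $(u,v)$ it produces unique $x,y$ with $\Phi_e(x,y)=(u,v)$, and these are given by affine formulas. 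Hence $\Phi_e$ is an affine isomorphism. Its linear part is read off directly from the linear coefficients above, which gives Eq.~(\ref{e58}). The points $(e,P(e))$ and $d_e$ arise because $\Phi_e(e,P(e))=\big(p_e+p_e,\,p_e+p_e\big)$ in these coordinates. One checks that the translation identifications of the tangent spaces match the stated ones; this bookkeeping of base points is the only subtle point in this step.

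Next, define the pulled-back bracket by $\{X,Y\}_\Phi=\Phi_e^{-1}\{\Phi_e X,\Phi_e Y\}_\times$. The componentwise bracket is a Lie affgebra bracket, because each affine identity holds in each factor separately. Transport of structure along the affine bijection $\Phi_e$ preserves biaffineness and both affine identities, since $\Phi_e$ preserves the heap operation. So $\{-,-\}_\Phi$ is a Lie affgebra bracket and $\Phi_e$ is an isomorphism onto the original product.

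For the subaffgebras, I compute the images of the two slices. We have $\Phi_e(x,P(e))=\langle H_e(x),d_e,d_e\rangle=H_e(x)$, so $\Phi_e(\mathfrak a\times\{P(e)\})=\Gamma_e(P)$. This is a Lie subaffgebra by Theorem~\ref{t7}. Also $\Phi_e(e,y)=\langle H_e(e),d_e,(y,y)\rangle$. Since $H_e(e)=(P^+_{\theta,e}(e),P(e))$ and $P^+_{\theta,e}(e)=\langle P(e),e,e\rangle=P(e)$, we get $H_e(e)=d_e$, and therefore $\Phi_e(e,y)=(y,y)$. So the second slice maps onto $\Delta(\mathfrak a)$, which is visibly closed under $\{-,-\}_\times$. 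Preimages of subaffgebras under an isomorphism are subaffgebras, so both slices are Lie subaffgebras of the pulled-back structure.

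For the isomorphism types, the map $x\mapsto(x,P(e))$ followed by $\Phi_e$ equals $H_e$. By Corollary~\ref{c8}, $H_e$ is an isomorphism from $(\mathfrak a,\{-,-\}^{\diamond}_{P,e})$ onto $\Gamma_e(P)$, so the first slice is isomorphic to $(\mathfrak a,\{-,-\}^{\diamond}_{P,e})$. Similarly, $y\mapsto(e,y)$ followed by $\Phi_e$ is the diagonal embedding $y\mapsto(y,y)$, which is an isomorphism of $\mathfrak a$ onto $\Delta(\mathfrak a)$. I expect no genuine obstacle here. The step needing the most care is making sure $\mathfrak a\times\{P(e)\}$ is the right slice: it is, because $(y,y)$ equals $d_e$ exactly when $y=P(e)$. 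The other careful step is verifying Eq.~(\ref{e58}) with consistent base points.
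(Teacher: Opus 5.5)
Your proposal is correct and follows essentially the same route as the paper. With $e$ as origin you reduce $\Phi_e$ to $(x,y)\mapsto\big((R+\theta\operatorname{id})(x)+y,\,R(x)+y\big)$ and get bijectivity (you cite Proposition~\ref{p11}, where the paper writes down the inverse). You then transport the bracket and identify the two slices through $\Phi_e(x,P(e))=H_e(x)$ and $\Phi_e(e,y)=(y,y)$, together with Theorem~\ref{t7} and Corollary~\ref{c8}.

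One arithmetic slip needs fixing: $\Phi_e(e,P(e))$ is $(p_e,p_e)=d_e$, not $(p_e+p_e,p_e+p_e)$. Your own formula with $x=0$, $y=p_e$ gives this, as does $\Phi_e(e,P(e))=\langle H_e(e),d_e,d_e\rangle=H_e(e)=d_e$. This equality is exactly why the base points $(e,P(e))$ and $d_e$ in Eq.~(\ref{e58}) are compatible.
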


\begin{proof}
Choose $e$ as origin, and write $P(x)=R(x)+p_e$, where $p_e=P(e)$. Then
\begin{align*}
\Phi_e(x,y)
&=\big((R+\theta\operatorname{id})(x)+p_e,\ R(x)+p_e\big)
-(p_e,p_e)+(y,y)\\
&=\big((R+\theta\operatorname{id})(x)+y,\ R(x)+y\big).
\end{align*}
Since $\theta\neq0$, its inverse is
$$
\Phi_e^{-1}(u,v)
=\bigl(\theta^{-1}(u-v),\ v-R(\theta^{-1}(u-v))\bigr).
$$
Indeed, the difference of the two coordinates determines $x=\theta^{-1}(u-v)$, and then the second determines $y=v-R(x)$. Thus $\Phi_e$ and its inverse are affine.

For $x,y,z,w\in\mathfrak a$, define the transported bracket by
$$
\{(x,y),(z,w)\}_{\Phi_e}
=\Phi_e^{-1}\bigl(\{\Phi_e(x,y),\Phi_e(z,w)\}_{\times}\bigr).
$$
This is a Lie affgebra bracket: $\Phi_e$ preserves the affine operations, and applying $\Phi_e$ to either defining identity reduces it to that identity for the componentwise bracket.

The two restrictions of $\Phi_e$ are
$$
\Phi_e(x,p_e)=H_e(x),\quad \Phi_e(0,y)=(y,y).
$$
By Theorem~\ref{t7} and Corollary~\ref{c8}, the graph bracket is transported by $H_e$. Therefore
\begin{align*}
\{(x,p_e),(y,p_e)\}_{\Phi_e}
&=\Phi_e^{-1}\bigl(\{H_e(x),H_e(y)\}_{\times}\bigr)\\
&=\Phi_e^{-1}\bigl(H_e(\{x,y\}^{\diamond}_{P,e})\bigr)
\quad\text{(by Corollary~\ref{c8})}\\
&=(\{x,y\}^{\diamond}_{P,e},p_e).
\end{align*}
Likewise,
\begin{align*}
\{(0,x),(0,y)\}_{\Phi_e}
&=\Phi_e^{-1}\bigl(\{(x,x),(y,y)\}_{\times}\bigr)\\
&=\Phi_e^{-1}\bigl(\{x,y\},\{x,y\}\bigr)
\quad\text{(by Eq.~(\ref{e50}))}\\
&=(0,\{x,y\}).
\end{align*}
These formulas prove closure of both factors and identify their brackets.

Finally, use $(0,p_e)$ as the origin in the domain and $(p_e,p_e)=d_e$ as the origin in the codomain. For $\xi,x\in\mathfrak g$, we have
\begin{align*}
\vec\Phi_e(\xi,x)
&=\Phi_e(\xi,p_e+x)-(p_e,p_e)
=\big((R+\theta\operatorname{id})(\xi)+x,\ R(\xi)+x\big),
\end{align*}
which is Eq.~(\ref{e58}).
\end{proof}

By Eq.~(\ref{e58}), the tangent Lie algebra of the affine double is the Rota-Baxter double of nonzero weight associated with $(\mathfrak g,R)$ in~\cite[Proposition~3.2]{LS2023}. In particular, its two tangent factors form the matched pair of~\cite[Theorem~3.3]{LS2023}, with mutual actions
$$
\rho:(\mathfrak g,[-,-]_{R,\theta})\longrightarrow\operatorname{End}_{\mathbf{k}}(\mathfrak g),
\quad
\mu:(\mathfrak g,[-,-])\longrightarrow\operatorname{End}_{\mathbf{k}}(\mathfrak g)
$$
given by
\begin{equation}\label{e59}
\rho(\xi)(x)=R[x,\xi]-[x,R\xi],
\quad
\mu(x)(\xi)=[x,\xi].
\end{equation}
Here $\xi,x\in\mathfrak g$. To compute the mixed bracket, apply Eq.~(\ref{e58}) to the two factors and then use its inverse:
\begin{align*}
[(\xi,0),(0,x)]
&=\vec\Phi_e^{-1}\bigl([(R+\theta\operatorname{id})(\xi),x],[R(\xi),x]\bigr)\\
&=\bigl([\xi,x],\ [R(\xi),x]-R([\xi,x])\bigr)\\
&=\bigl(-\mu(x)(\xi),\rho(\xi)(x)\bigr)
\quad\text{(by Eq.~(\ref{e59}))}.
\end{align*}

The identity in Eq.~(\ref{e25}) implies the ordinary Rota-Baxter identity for the linear part by Eq.~(\ref{e27}). For an idempotent affine Rota-Baxter operator, graph closure is equivalent to splitting.

\begin{prop}\label{p12}
Let $P$ be an idempotent affine Rota-Baxter operator, and let $e\in\operatorname{Im}P$ be an idempotent point. Take $\theta=-1$ in the graph construction. Then $P(e)=e$, and the conditions in Eq.~(\ref{e52}) are equivalent to the splitting conditions of Theorem~\ref{t5}. When they hold,
\begin{equation}\label{e60}
\Gamma_e(P)
=
\operatorname{Ker}_eP\times\operatorname{Im}P.
\end{equation}
Thus splitting at weight $-1$ is the projection case of the affine double construction.
\end{prop}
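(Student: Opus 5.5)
The plan is to work in coordinates with $e$ as origin, where everything becomes linear, and compare Eq.~(\ref{e52}) with condition (iii) of Theorem~\ref{t5}. Since $e\in\operatorname{Im}P$ and $P^2=P$, we have $P(e)=e$, so $p_e=0$. Since $e$ is idempotent, $s_e=0$. Then $\mathfrak a=\mathfrak a(\mathfrak g;\kappa_e,\lambda_e,0)$ and $P=R$ is a linear projection. By Corollary~\ref{c2}, $R$ is a Rota-Baxter operator of weight $-1$, so $P$ is a tangent Rota-Baxter operator of weight $-1$ and Theorem~\ref{t7} applies with $\theta=-1$.

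Substituting $p_e=0$ and $s_e=0$ into Eq.~(\ref{e52}):
\begin{itemize}
\item the third condition becomes $0=0$;
\item the first two become $R\kappa_e=\kappa_eR$ and $R\lambda_e=\lambda_eR$, since $\operatorname{ad}_0=0$.
\end{itemize}
These are exactly condition (iii) of Theorem~\ref{t5}. Hence Eq.~(\ref{e52}) is equivalent to splitting at $e$. The only point needing care is that Theorem~\ref{t5} uses the same coordinates $\kappa_e,\lambda_e$ at $e$, which holds by construction.

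For Eq.~(\ref{e60}), compute $H_e$ at $\theta=-1$. In coordinates at $e$ we have $P^+_{-1,e}(x)=R(x)-x=-(\operatorname{id}-R)(x)$, so
$$
H_e(x)=\bigl(-(\operatorname{id}-R)x,\;Rx\bigr).
$$
As $x$ ranges over $\mathfrak g=\mathfrak g_+\oplus\mathfrak g_-$, the components $-(\operatorname{id}-R)x=-x_-$ and $Rx=x_+$ range independently over $\mathfrak g_-$ and $\mathfrak g_+$. Therefore $\Gamma_e(P)=\mathfrak g_-\times\mathfrak g_+$. Translating back, $\mathfrak g_-=\operatorname{Ker}_eP$ and $\mathfrak g_+=\operatorname{Im}P$, using Lemma~\ref{l3}. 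This identification is origin-free because both sides are affine subspaces through $(e,e)=d_e$.

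I expect no step to be a real obstacle. The only points to check are that the minus sign in $P^+_{-1,e}$ does not affect the image, since $\mathfrak g_-$ is a subspace, and that Eq.~(\ref{e60}) holds as a set-theoretic identity whether or not the splitting conditions hold. The splitting conditions are used only to interpret Eq.~(\ref{e60}) as a product of Lie subaffgebras.
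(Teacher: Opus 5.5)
Your proposal is correct and follows the paper's proof essentially step for step: choosing $e$ as origin gives $p_e=s_e=0$, so Eq.~(\ref{e52}) reduces to $R\kappa_e=\kappa_eR$ and $R\lambda_e=\lambda_eR$, which is condition (iii) of Theorem~\ref{t5}, and $H_e(x)=(-x_-,x_+)$ has image $\ker R\times\operatorname{Im}R$. Your two extra remarks are also accurate and make explicit what the paper leaves implicit: Theorem~\ref{t7} applies at $\theta=-1$ because Corollary~\ref{c2} makes $R$ Rota-Baxter of weight $-1$, and Eq.~(\ref{e60}) holds as an equality of sets without the splitting conditions.
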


\begin{proof}
Since $e\in\operatorname{Im}P$ and $P^2=P$, we have $P(e)=e$. Choose $e$ as origin. Then $p_e=s_e=0$, and $P$ becomes a linear projection $R$. Hence Eq.~(\ref{e52}) reduces to $R\kappa_e=\kappa_eR$ and $R\lambda_e=\lambda_eR$, which is condition (iii) of Theorem~\ref{t5}.

At weight $-1$, $P^+_{-1,e}(x)=(R-\operatorname{id})(x)$. Writing $x=x_++x_-$ with $x_+\in\operatorname{Im}R$ and $x_-\in\ker R$ gives $H_e(x)=(-x_-,x_+)$.
Since $x_+$ and $x_-$ vary independently, the image of $H_e$ is $\ker R\times\operatorname{Im}R$, which gives Eq.~(\ref{e60}) in affine notation.
\end{proof}

\section{Explicit operators and classification}\label{s4}

We first consider Lie affgebras whose two structural endomorphisms are equal scalar multiples of the identity. The coordinate equations then give conditions for affine Rota-Baxter operators and their splitting. For canonical Lie affgebras, all operators are described by Lie subalgebra decompositions and translation vectors. This description gives the classification in the abelian case of arbitrary finite dimension and in the nonabelian case of dimension 2.

\subsection{Scalar structural endomorphisms}\label{u16}

Fix a Lie algebra $(\mathfrak g,[-,-])$. For each pair $(\gamma,s)\in\mathbf{k}\times\mathfrak g$, set
$$
\mathfrak a_{\gamma,s}(\mathfrak g)
=\mathfrak a(\mathfrak g;\gamma\operatorname{id},\gamma\operatorname{id},s).
$$
Thus the affine space underlying $\mathfrak g$ has bracket
\begin{equation}\label{esc}
\{x,y\}=[x,y]+\gamma y+s,\quad x,y\in\mathfrak g.
\end{equation}
The Lie algebra $\mathfrak g$ and its bracket remain fixed, while the scalar $\gamma$ and the constant vector $s$ are the parameters. Each choice satisfies Eq.~(\ref{e1}), so Proposition~\ref{tstr} gives a Lie affgebra. Its structural endomorphisms are $\kappa=\lambda=\gamma\operatorname{id}$. The case $\gamma=1$ and $s=0$ is the canonical Lie affgebra considered in Section~\ref{u17}. Such brackets also occur in the classification of Lie affgebras with simple tangent Lie algebras~\cite{ABR2025}.

For an affine map $P:\mathfrak a_{\gamma,s}(\mathfrak g)\to\mathfrak a_{\gamma,s}(\mathfrak g)$, write $P(x)=R(x)+p$, where $R$ is its linear part and $p=P(0)$.

\begin{lemma}\label{l4}
The affine map $P=R+p$ satisfies $P^2=P$ if and only if $R^2=R$ and $R(p)=0$.
\end{lemma}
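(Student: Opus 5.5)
We compute $P^2$ directly in the chosen coordinates, where $P(x)=R(x)+p$ with $R$ linear. Applying $P$ twice gives
$$
P^2(x)=R\bigl(R(x)+p\bigr)+p=R^2(x)+R(p)+p,\quad x\in\mathfrak g.
$$
The condition $P^2=P$ therefore reads $R^2(x)+R(p)+p=R(x)+p$ for all $x$, that is,
$$
R^2(x)-R(x)+R(p)=0\quad\text{for all }x\in\mathfrak g.
$$

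To separate this into linear and constant parts, we use the uniqueness of the decomposition of an affine map into linear part and translation vector recalled before Proposition~\ref{p3}. Evaluating at $x=0$ gives $R(p)=0$. Substituting this back, the remaining equation is $R^2(x)=R(x)$ for all $x$, so $R^2=R$. Conversely, if $R^2=R$ and $R(p)=0$, the displayed expression vanishes identically, and hence $P^2=P$.

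No step is a real obstacle. The only point to watch is that composition of affine maps is computed with the linear part acting on the whole expression $R(x)+p$, which yields the extra term $R(p)$. Nothing in the argument uses the bracket, so the lemma holds for any affine map on the affine space underlying $\mathfrak g$, independently of $\gamma$ and $s$.
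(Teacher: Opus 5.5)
Your proof is correct and follows the paper's argument: you expand $P^2(x)=R^2(x)+R(p)+p$ and compare constant and linear parts with $P(x)=R(x)+p$. Your evaluation at $x=0$ simply makes that comparison explicit.
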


\begin{proof}
We have $P^2(x)=R^2(x)+R(p)+p$.
Comparing the linear and constant parts with $P(x)=R(x)+p$ gives the assertion.
\end{proof}

\begin{prop}\label{p13}
Assume $\gamma\neq0$. An affine map $P(x)=R(x)+p$ is an affine Rota-Baxter operator on $\mathfrak a_{\gamma,s}(\mathfrak g)$ if and only if
\begin{enumerate}[label={\rm(\roman*)}]
\item $R$ is a Rota-Baxter operator of weight $-1$.
\item $R^2=R$.
\item $[R,\operatorname{ad}_p]=0$.
\item
\begin{equation}\label{e62}
R(s+\gamma p)=s+(\gamma-1)p.
\end{equation}
\end{enumerate}
Every operator satisfying these conditions is idempotent.
\end{prop}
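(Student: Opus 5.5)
The plan is to specialize Corollary~\ref{c2} to $\kappa=\lambda=\gamma\operatorname{id}$ and simplify the four coordinate identities one at a time. The first identity in Eq.~(\ref{e15}) is the Rota-Baxter identity of weight $-1$ for $R$, which is condition (i). The second identity in Eq.~(\ref{e15}) reads $\gamma R^2=\gamma R$. Since $\gamma\neq0$, this is equivalent to $R^2=R$, which is condition (ii).

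Next I will treat the first identity in Eq.~(\ref{e17}). With $\lambda=\gamma\operatorname{id}$ it becomes
$$
\gamma R^2+R\operatorname{ad}_p=\gamma R+\operatorname{ad}_pR.
$$
If (ii) holds, the $\gamma$ terms cancel and we obtain $[R,\operatorname{ad}_p]=0$, which is (iii). Conversely, (ii) and (iii) give this identity back. The last identity in Eq.~(\ref{e17}) is $R(s+\gamma p)=s-p+\gamma p$, which is literally Eq.~(\ref{e62}). So the identities of Corollary~\ref{c2} hold if and only if (i)--(iv) hold. The one point to watch is the order of deduction: the $\kappa$-identity must give (ii) first, and only then can the $\lambda$-identity be reduced to (iii).

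The remaining claim is idempotency. By Lemma~\ref{l4}, $P^2=P$ amounts to $R^2=R$ together with $R(p)=0$. The first is (ii), so the main obstacle is deriving $R(p)=0$ from (iv), since $s$ enters that equation. First I would apply $R$ to Eq.~(\ref{e62}). Using $R^2=R$, the left side becomes $R(s+\gamma p)$ again, which by (iv) equals $s+(\gamma-1)p$. The right side becomes $R(s)+(\gamma-1)R(p)$. This gives
$$
s+(\gamma-1)p=R(s)+(\gamma-1)R(p).
$$
Now I subtract this from Eq.~(\ref{e62}) written as $R(s)+\gamma R(p)=s+(\gamma-1)p$. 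The result is $R(p)=0$.

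Finally, Lemma~\ref{l4} gives $P^2=P$, which completes the proof.
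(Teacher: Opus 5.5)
Your proposal is correct and follows the paper's proof essentially step for step. You specialize Corollary~\ref{c2} to $\kappa=\lambda=\gamma\operatorname{id}$, use $\gamma\neq0$ to get $R^2=R$ before reducing the $\lambda$-identity to $[R,\operatorname{ad}_p]=0$, then apply $R$ to Eq.~(\ref{e62}) to obtain $R(p)=0$ and finish with Lemma~\ref{l4}. Your substitute-and-subtract step is equivalent to the paper's direct expansion $R^2(s+\gamma p)=R(s)+\gamma R(p)$.
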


\begin{proof}
Apply Corollary~\ref{c2} with $\kappa=\lambda=\gamma\operatorname{id}$. Its bilinear condition is (i), and $R\kappa R=\kappa R$ becomes $R^2=R$ because $\gamma\neq0$. With this equality, the first identity in Eq.~(\ref{e17}) reduces to $R\operatorname{ad}_p=\operatorname{ad}_pR$, and the second becomes Eq.~(\ref{e62}). This proves the equivalence.

Applying $R$ to Eq.~(\ref{e62}) and using $R^2=R$ gives
$
R(s)+\gamma R(p)=R(s)+(\gamma-1)R(p),
$
so $R(p)=0$. Lemma~\ref{l4} now shows that $P^2=P$.
\end{proof}

For $\gamma=0$, Corollary~\ref{c2} states that $P$ is an affine Rota-Baxter operator if and only if $R$ is Rota-Baxter of weight $-1$, $R\operatorname{ad}_p=\operatorname{ad}_pR$, and $R(s)=s-p$. There is no projection condition in this case. For instance, if $\mathfrak g$ is abelian and $s=p=0$, every linear endomorphism $R$ satisfies the affine Rota-Baxter identity. Thus the hypothesis $\gamma\neq0$ in Proposition~\ref{p13} is necessary for its idempotency conclusion.

For nonzero $\gamma$, the condition for splitting becomes the following.

\begin{coro}\label{c9}
Let $P(x)=R(x)+p$ be an affine Rota-Baxter operator on $\mathfrak a_{\gamma,s}(\mathfrak g)$ with $\gamma\neq0$. Assume that $p=P(0)$ is an idempotent point, equivalently $s+(\gamma-1)p=0$.
Then
$$
\mathfrak a_{\gamma,s}(\mathfrak g)
=
(p+\operatorname{Im}R)
\oplus_p
(p+\ker R)
$$
is an affine Lie factorization. In particular, $P$ is split at $p$.
\end{coro}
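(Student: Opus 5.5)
The plan is to reduce everything to Theorem~\ref{t6} by showing that $p$ is an idempotent fixed point at which $P$ is split. First, Proposition~\ref{p13} gives $R^2=R$ and $R(p)=0$, so $P$ is idempotent and $P(p)=R(p)+p=p$. The point $p$ is idempotent because
\[
\{p,p\}=\gamma p+s=p \iff s+(\gamma-1)p=0 ,
\]
which is the stated hypothesis. Hence $p\in\operatorname{Im}P$ is an idempotent fixed point.

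Next I move the origin to $p$ using Lemma~\ref{l2}. The new data are:
\[
\kappa_p=\gamma\operatorname{id},\quad \lambda_p=\gamma\operatorname{id}+\operatorname{ad}_p,\quad s_p=s-(1-\gamma)p=0,\quad R_p=R,\quad p_p=p-(\operatorname{id}-R)p=0 .
\]
In these coordinates $P$ is the linear projection $R$. The fibre $P^{-1}(p)$ becomes $\ker R$, which in the original coordinates is $p+\ker R$. Likewise $\operatorname{Im}P$ becomes $\operatorname{Im}R$, which in the original coordinates is $p+R(\mathfrak g)+\ldots$; more directly, $\operatorname{Im}P=R(\mathfrak g)+p$.

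The splitting test is condition (iii) of Theorem~\ref{t5}: $R$ must commute with $\kappa_p$ and with $\lambda_p$. Commutation with $\kappa_p=\gamma\operatorname{id}$ is trivial. For $\lambda_p$ we need $[R,\gamma\operatorname{id}+\operatorname{ad}_p]=[R,\operatorname{ad}_p]=0$, which is condition (iii) of Proposition~\ref{p13}. So $P$ is split at $p$.

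Theorem~\ref{t6} then yields the ordered affine Lie factorization $\operatorname{Im}P\oplus_p\operatorname{Ker}_pP$, which is exactly $(p+\operatorname{Im}R)\oplus_p(p+\ker R)$. The only point needing care is the bookkeeping of Lemma~\ref{l2} at the new origin, specifically that $\lambda$ shifts by $\operatorname{ad}_p$; that shift is precisely what condition (iii) of Proposition~\ref{p13} controls, so I do not expect a genuine obstacle.
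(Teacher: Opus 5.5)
Your proposal is correct and is essentially the paper's argument. In both, splitting reduces to the fact that at the base point $p$ the shifted endomorphism $\gamma\operatorname{id}+\operatorname{ad}_p$ commutes with $R$, by condition (iii) of Proposition~\ref{p13}. The only difference is packaging: the paper checks closure of $p+\operatorname{Im}R$ and $p+\ker R$ directly via Proposition~\ref{p1} with $a=p$ and then applies Lemma~\ref{l3}, whereas you move the origin with Lemma~\ref{l2} and invoke Theorems~\ref{t5} and~\ref{t6}.
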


\begin{proof}
By Proposition~\ref{p13}, $P$ is idempotent, so Lemma~\ref{l4} gives $R^2=R$ and $R(p)=0$.
Hence $p\in\ker R$ and $p+\ker R=\ker R$ in the chosen coordinates. Proposition~\ref{p13} gives $[R,\operatorname{ad}_p]=0$.
Thus $\operatorname{ad}_p$ preserves both $\operatorname{Im}R$ and $\ker R$.

Apply Proposition~\ref{p1} with $a=p$. The condition $\{p,p\}=p$ gives $\gamma p-p+s=0$.
The map $\kappa=\gamma\operatorname{id}$ preserves every vector subspace, and $\lambda+\operatorname{ad}_p=\gamma\operatorname{id}+\operatorname{ad}_p$ preserves both $\operatorname{Im}R$ and $\ker R$. Since the two spaces are Lie subalgebras by Proposition~\ref{p2}, both $p+\operatorname{Im}R$ and $p+\ker R$ are Lie subaffgebras. Lemma~\ref{l3} gives the affine direct sum decomposition.
\end{proof}

\subsection{Canonical Lie affgebras}\label{u17}

In the notation of Section~\ref{u16}, the canonical Lie affgebra associated with $\mathfrak g$ is $\mathfrak a_{\mathrm{can}}(\mathfrak g)=\mathfrak a_{1,0}(\mathfrak g)$. Its structure parameters are $\kappa=\lambda=\operatorname{id}$ and $s=0$, and its bracket is $\{x,y\}=[x,y]+y$. Every point is idempotent, since $\{p,p\}=p$.

\begin{theorem}\label{t9}
Let $\mathfrak g=\mathfrak g_+\oplus\mathfrak g_-$ be a vector space direct sum in which $\mathfrak g_+$ and $\mathfrak g_-$ are Lie subalgebras. Let $p\in\mathfrak g_-$ satisfy
\begin{equation}\label{e65}
[p,\mathfrak g_+]
\subseteq
\mathfrak g_+.
\end{equation}
Define
\begin{equation}\label{e66}
P(x_++x_-)=x_++p,
\quad
x_+\in\mathfrak g_+,
\quad
x_-\in\mathfrak g_-.
\end{equation}
Then $P$ is an idempotent affine Rota-Baxter operator on $\mathfrak a_{\mathrm{can}}(\mathfrak g)$, split at $p$, and
\begin{equation}\label{e67}
\mathfrak a_{\mathrm{can}}(\mathfrak g)
=
(p+\mathfrak g_+)
\oplus_p
\mathfrak g_-.
\end{equation}
Conversely, every affine Rota-Baxter operator on $\mathfrak a_{\mathrm{can}}(\mathfrak g)$ is of the form in Eq.~(\ref{e66}) for a decomposition into Lie subalgebras and an element $p\in\mathfrak g_-$ satisfying Eq.~(\ref{e65}). In particular, every affine Rota-Baxter operator on $\mathfrak a_{\mathrm{can}}(\mathfrak g)$ is idempotent and split at $p$.
\end{theorem}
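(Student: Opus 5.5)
The plan is to work entirely in the coordinates of $\mathfrak a_{\mathrm{can}}(\mathfrak g)=\mathfrak a_{1,0}(\mathfrak g)$ and read everything off Proposition~\ref{p13} with $\gamma=1$ and $s=0$. In that case conditions (i)--(iv) become: $R$ is Rota-Baxter of weight $-1$, $R^2=R$, $[R,\operatorname{ad}_p]=0$, and $R(p)=0$ (Eq.~(\ref{e62}) reads $R(p)=0$).

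For the direct statement, let $R$ be the projection onto $\mathfrak g_+$ along $\mathfrak g_-$, so that $P=R+p$ as in Eq.~(\ref{e66}). Condition (i) follows from Proposition~\ref{p2}, and (ii) is immediate. Condition (iv) is $R(p)=0$, which holds because $p\in\mathfrak g_-$. The only nontrivial point is (iii). Since $p\in\mathfrak g_-$ and $\mathfrak g_-$ is a subalgebra, $\operatorname{ad}_p$ preserves $\mathfrak g_-$, and Eq.~(\ref{e65}) says it preserves $\mathfrak g_+$. An endomorphism preserving both summands commutes with the projection, so $[R,\operatorname{ad}_p]=0$. Proposition~\ref{p13} then gives that $P$ is an affine Rota-Baxter operator, and it is idempotent. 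Every point of the canonical Lie affgebra is idempotent, so $\{p,p\}=p$, i.e.\ $s+(\gamma-1)p=0$ holds trivially. Corollary~\ref{c9} therefore yields the factorization $(p+\operatorname{Im}R)\oplus_p(p+\ker R)$ and splitting at $p$. Since $p\in\ker R=\mathfrak g_-$, we have $p+\ker R=\mathfrak g_-$, which gives Eq.~(\ref{e67}). One should also note that $P(p)=R(p)+p=p$, so $p$ is an idempotent fixed point, as Definition~\ref{d5} requires.

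For the converse, start from an arbitrary affine Rota-Baxter operator $P=R+p$. Proposition~\ref{p13} gives $R^2=R$, $R$ Rota-Baxter of weight $-1$, $R(p)=0$ and $[R,\operatorname{ad}_p]=0$. Set $\mathfrak g_+=\operatorname{Im}R$ and $\mathfrak g_-=\ker R$. These are Lie subalgebras forming a direct sum by Proposition~\ref{p2}, and $p\in\mathfrak g_-$. For $x\in\mathfrak g_+$, commutation gives $[p,x]=[p,R(x)]=R([p,x])\in\mathfrak g_+$, which is Eq.~(\ref{e65}). Finally, $P(x_++x_-)=x_++p$ is just $R+p$ written in components. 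The idempotency and splitting then follow by applying the direct part to this data.

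I do not expect a real obstacle here. The step requiring the most care is the equivalence between $[R,\operatorname{ad}_p]=0$ and the pair of conditions that $p\in\mathfrak g_-$ (which forces $\operatorname{ad}_p\mathfrak g_-\subseteq\mathfrak g_-$) and Eq.~(\ref{e65}). I would state this as a one-line remark about projections commuting with endomorphisms that preserve both summands, and use it in both directions.
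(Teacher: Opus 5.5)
Your proposal is correct and matches the paper's proof in all essentials. The paper uses the same four coordinate conditions ($R$ Rota-Baxter of weight $-1$, $R^2=R$, $[R,\operatorname{ad}_p]=0$, $R(p)=0$), the same observation that $\operatorname{ad}_p$ preserves both summands and so commutes with the projection, and the same converse step $R([p,x_+])=[p,R(x_+)]=[p,x_+]$. The only difference is which earlier results you cite. You take the coordinate conditions from Proposition~\ref{p13} at $\gamma=1$, $s=0$, and the factorization and splitting from Corollary~\ref{c9}. The paper instead specializes Corollary~\ref{c2} directly and checks by hand that $p+\mathfrak g_+$ and $\mathfrak g_-$ are closed under $\{x,y\}=[x,y]+y$. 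Both routes are valid, and there is no circularity since those results precede the theorem.
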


\begin{proof}
For the canonical Lie affgebra, $\kappa=\lambda=\operatorname{id}$ and $s=0$. Substituting these values into Corollary~\ref{c2} shows that $P=R+p$ is an affine Rota-Baxter operator exactly when
\begin{align*}
&R([x,y]_{R,-1})=[R(x),R(y)],\quad R^2=R,\\
&R(R+\operatorname{ad}_p)=(\operatorname{id}+\operatorname{ad}_p)R,
\quad R(p)=0.
\end{align*}
Under $R^2=R$, the third equality is $R\operatorname{ad}_p=\operatorname{ad}_pR$.

Now start with the decomposition and vector in the statement, and let $R$ be the projection onto $\mathfrak g_+$ along $\mathfrak g_-$. Proposition~\ref{p2} gives the first two identities above. Since $p\in\mathfrak g_-$, we have $R(p)=0$. The hypothesis $[p,\mathfrak g_+]\subseteq\mathfrak g_+$ and the subalgebra condition on $\mathfrak g_-$ give, for $x_\pm\in\mathfrak g_\pm$,
\begin{align*}
R\operatorname{ad}_p(x_++x_-)
=R([p,x_+]+[p,x_-])
=[p,x_+]
=\operatorname{ad}_pR(x_++x_-).
\end{align*}
Thus all four coordinate identities hold, and $P$ is an affine Rota-Baxter operator. Moreover,
$$
P^2(x)=R^2(x)+R(p)+p=R(x)+p=P(x),\quad P(p)=p.
$$

Its image is $p+\mathfrak g_+$, and its fibre over $p$ is $\mathfrak g_-$. For $x_\pm,y_\pm\in\mathfrak g_\pm$, the canonical bracket gives
\begin{align*}
\{p+x_+,p+y_+\}-p
&=[x_+,y_+]+[p,y_+-x_+]+y_+\in\mathfrak g_+,\\
\{x_-,y_-\}
&=[x_-,y_-]+y_-\in\mathfrak g_-.
\end{align*}
Both factors are therefore Lie subaffgebras. Since $p\in\mathfrak g_-$, their intersection is $\{p\}$, and every $x=x_++x_-$ has the unique expression
$
x=\langle p+x_+,p,x_-\rangle.
$
This proves Eq.~(\ref{e67}) and splitting at $p$.

Conversely, suppose $P=R+p$ is an affine Rota-Baxter operator. The coordinate identities above make $R$ an idempotent ordinary Rota-Baxter operator of weight $-1$. Set $\mathfrak g_+=\operatorname{Im}R$ and $\mathfrak g_-=\ker R$. Proposition~\ref{p2} gives a direct sum of Lie subalgebras, and $R(p)=0$ gives $p\in\mathfrak g_-$. If $x_+\in\mathfrak g_+$, then
$
R([p,x_+])=[p,R(x_+)]=[p,x_+],
$
so $[p,x_+]\in\mathfrak g_+$. Thus Eq.~(\ref{e65}) holds, and $P(x_++x_-)=x_++p$. The preceding construction proves idempotency and splitting for this operator.
\end{proof}

The complementary affine Rota-Baxter projection is $Q_p(x_++x_-)=x_-$. The common point of the two affine factors is $p$.

\subsection{Classification in low dimensions}\label{u18}

We classify affine Rota-Baxter operators on canonical Lie affgebras of dimension at most 2. The description of automorphisms below holds in arbitrary dimension, and the abelian classification holds in every finite dimension. Two affine Rota-Baxter operators $P$ and $P'$ on $\mathfrak a_{\mathrm{can}}(\mathfrak g)$ are called \textbf{equivalent}, written $P\sim P'$, if $\varphi P=P'\varphi$ for some automorphism $\varphi$ of this Lie affgebra. This equivalence acts on both the linear part and the translation vector of the operator.

\begin{prop}\label{p14}
Every Lie affgebra automorphism of $\mathfrak a_{\mathrm{can}}(\mathfrak g)$ has the unique form
$$
\varphi(x)=\psi(x)+q,
\quad
\psi\in\operatorname{Aut}_{\rm Lie}(\mathfrak g),
\quad
q\in Z(\mathfrak g).
$$
Here $\operatorname{Aut}_{\rm Lie}(\mathfrak g)$ denotes the group of Lie algebra automorphisms of $\mathfrak g$, and $Z(\mathfrak g)$ is its center.
Consequently, if $P=R+p$ and $P'=R'+p'$, then $P$ and $P'$ are equivalent if and only if there are $\psi\in\operatorname{Aut}_{\rm Lie}(\mathfrak g)$ and $q\in Z(\mathfrak g)$ such that
\begin{equation}\label{e68}
R'=\psi R\psi^{-1},
\quad
p'=\psi(p)+(\operatorname{id}-R')q.
\end{equation}
\end{prop}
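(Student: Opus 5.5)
Apply Proposition~\ref{thom} with $\mathfrak a=\mathfrak a'=\mathfrak a_{\mathrm{can}}(\mathfrak g)$, so $\kappa=\kappa'=\lambda=\lambda'=\operatorname{id}$ and $s=s'=0$. Every affine map has the unique form $\varphi=\psi+q$ with $\psi$ linear and $q=\varphi(0)$, as shown after Proposition~\ref{tstr}; this gives the uniqueness of the decomposition. The map $\varphi$ is a homomorphism exactly when $\psi$ is a Lie algebra homomorphism and Eq.~(\ref{e3}) holds. The first condition of Eq.~(\ref{e3}), $\psi\kappa=\kappa'\psi$, is automatic. The second becomes $\psi=(\operatorname{ad}_q+\operatorname{id})\psi$, that is, $\operatorname{ad}_q\psi=0$. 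The third becomes $0=0-q+q$, which is automatic.

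Since $\varphi$ is bijective and affine, its linear part $\psi$ is bijective. Then $\operatorname{ad}_q\psi=0$ gives $\operatorname{ad}_q=0$, so $q\in Z(\mathfrak g)$. Conversely, let $\psi\in\operatorname{Aut}_{\rm Lie}(\mathfrak g)$ and $q\in Z(\mathfrak g)$. Then all three conditions of Eq.~(\ref{e3}) hold, so $\psi+q$ is a bijective homomorphism. Its inverse $\psi^{-1}-\psi^{-1}(q)$ is again of the same form, because $\psi^{-1}$ preserves the center. Hence $\psi+q$ is an automorphism.

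For the equivalence, expand $\varphi P=P'\varphi$ in coordinates:
\[
\varphi P(x)=\psi R(x)+\psi(p)+q,\qquad P'\varphi(x)=R'\psi(x)+R'(q)+p'.
\]
The two sides agree for all $x$ exactly when their linear and constant parts agree. The linear parts give $\psi R=R'\psi$, that is, $R'=\psi R\psi^{-1}$. The constant parts give $p'=\psi(p)+q-R'(q)=\psi(p)+(\operatorname{id}-R')q$. Ranging over all automorphisms $\psi+q$ with $\psi\in\operatorname{Aut}_{\rm Lie}(\mathfrak g)$ and $q\in Z(\mathfrak g)$ then gives the stated criterion Eq.~(\ref{e68}).

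The only real point is the step deducing $q$ central: bijectivity of $\varphi$ is needed to make $\psi$ surjective, so that $\operatorname{ad}_q\psi=0$ forces $\operatorname{ad}_q=0$. The rest is direct substitution into Proposition~\ref{thom}.
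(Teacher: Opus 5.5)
Your proof is correct and follows the paper's argument: you specialize Proposition~\ref{thom} to $\kappa=\lambda=\operatorname{id}$, $s=0$, use surjectivity of $\psi$ to deduce $\operatorname{ad}_q=0$, and compare the linear and constant parts of $\varphi P=P'\varphi$. Your extra check that the inverse again has the form $\psi^{-1}-\psi^{-1}(q)$ is harmless but unnecessary, because the paper defines an automorphism simply as a bijective homomorphism.
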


\begin{proof}
Apply the homomorphism conditions in Eq.~(\ref{e3}) to $\kappa=\lambda=\operatorname{id}$ and $s=0$. If $\varphi(x)=\psi(x)+q$ is an automorphism, then $\psi$ is a Lie algebra automorphism, and the second equation in Eq.~(\ref{e3}) becomes $\psi=(\operatorname{ad}_q+\operatorname{id})\psi$. Since $\psi$ is surjective, this is equivalent to $\operatorname{ad}_q=0$, or $q\in Z(\mathfrak g)$. The first and third equations in Eq.~(\ref{e3}) hold automatically. Conversely, every Lie algebra automorphism $\psi$ and central vector $q$ satisfy the homomorphism conditions and define a bijective affine map. Comparing the linear and constant parts of $\varphi P=P'\varphi$ gives $\psi R=R'\psi$ and $\psi(p)+q=R'(q)+p'$, which is equivalent to Eq.~(\ref{e68}).
\end{proof}

\begin{theorem}\label{t10}
Let $\mathfrak g$ be abelian of finite dimension $n\geq0$, with basis $e_1,\ldots,e_n$. The equivalence classes of affine Rota-Baxter operators on $\mathfrak a_{\mathrm{can}}(\mathfrak g)$ are indexed by $r=0,\ldots,n$. A representative of the class indexed by $r$ is
\begin{equation}\label{e69}
P_r\left(\sum_{i=1}^n x_i e_i\right)
=
\sum_{i=1}^r x_i e_i.
\end{equation}
Here $x_1,\ldots,x_n\in\mathbf{k}$.
For $n=0$, the empty sum denotes the unique point and there is one class. There are two classes in dimension 1 and three classes in the abelian case of dimension 2.
\end{theorem}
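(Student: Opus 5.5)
By Theorem~\ref{t9}, every affine Rota-Baxter operator on $\mathfrak a_{\mathrm{can}}(\mathfrak g)$ has the form $P=R+p$. Here $R$ is a projection onto $\mathfrak g_+$ along $\mathfrak g_-$, and $p\in\mathfrak g_-=\ker R$ satisfies $[p,\mathfrak g_+]\subseteq\mathfrak g_+$. When $\mathfrak g$ is abelian, every subspace is a Lie subalgebra and condition~(\ref{e65}) holds trivially. The operators are therefore exactly the pairs $(R,p)$ with $R^2=R$ an arbitrary linear projection and $p\in\ker R$ arbitrary. I would first record this description, and then apply Proposition~\ref{p14}. Since $Z(\mathfrak g)=\mathfrak g$ and $\operatorname{Aut}_{\rm Lie}(\mathfrak g)=GL(\mathfrak g)$, equivalence is given by Eq.~(\ref{e68}) with arbitrary $\psi\in GL(\mathfrak g)$ and arbitrary $q\in\mathfrak g$.

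The key step is to remove the translation vector. Given $P=R+p$ with $p\in\ker R$, I take $\psi=\operatorname{id}$ and $q=-p$ in Eq.~(\ref{e68}). Then $R'=R$ and
$$
p'=p-(\operatorname{id}-R)p=R(p)=0.
$$
Hence $P\sim R$, so every operator is equivalent to a linear projection. Next I would conjugate the projection. Let $r=\operatorname{rank}R=\dim\operatorname{Im}R$. Choose a basis of $\operatorname{Im}R$ followed by a basis of $\ker R$, and send it to $e_1,\dots,e_n$ by some $\psi\in GL(\mathfrak g)$. Then $\psi R\psi^{-1}$ is the coordinate projection $P_r$ of Eq.~(\ref{e69}), and the translation vector stays $0$ because $\psi(0)=0$ and we may take $q=0$.

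For inequivalence, I note that Eq.~(\ref{e68}) gives $R'=\psi R\psi^{-1}$ whatever $q$ is. The rank of the linear part is therefore an invariant, and $P_r\not\sim P_{r'}$ for $r\neq r'$. All $r$ with $0\le r\le n$ occur, which gives $n+1$ classes. The degenerate case $n=0$ gives one class, and the cases $n=1,2$ give two and three classes. I do not expect a real obstacle here. The only points needing care are that the centrality condition on $q$ is vacuous in the abelian case, and that the choice $q=-p$ kills $p$ precisely because $p\in\ker R$, as forced by Theorem~\ref{t9}.
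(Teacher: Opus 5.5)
Your proposal is correct and follows the paper's proof essentially step by step. You use Theorem~\ref{t9} with condition~(\ref{e65}) vacuous, remove the translation vector via $\psi=\operatorname{id}$, $q=-p$ in Eq.~(\ref{e68}) (allowed since $Z(\mathfrak g)=\mathfrak g$), conjugate the projection to $P_r$ using a basis adapted to $\operatorname{Im}R\oplus\ker R$, and separate the classes by the rank of the linear part.
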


\begin{proof}
Theorem~\ref{t9} gives $P=R+p$, where $R^2=R$ and $p\in\ker R$. Conversely, these conditions suffice because every subspace of an abelian Lie algebra is a Lie subalgebra and Eq.~(\ref{e65}) holds. In Proposition~\ref{p14}, take $\psi=\operatorname{id}$ and $q=-p$. Since $Z(\mathfrak g)=\mathfrak g$, this is an allowed automorphism. By Eq.~(\ref{e68}), the conjugate operator has the same linear part $R$ and constant term
$
p+(\operatorname{id}-R)(-p)=R(p)=0.
$

Choose a basis of $\operatorname{Im}R$ followed by a basis of $\ker R$. The map $R$ fixes the basis vectors in its image and sends those in its kernel to zero. Thus, with $r=\operatorname{rank}R$, it is conjugate to the projection in Eq.~(\ref{e69}). Every displayed representative is a projection and therefore an affine Rota-Baxter operator. Finally, Eq.~(\ref{e68}) shows that any equivalence conjugates the linear parts, so it preserves their ranks. Thus every operator is equivalent to exactly one of the displayed representatives.
\end{proof}

Let $\mathfrak r_2$ denote the Lie algebra with basis $h,e$ and bracket $[h,e]=e$. Every nonabelian Lie algebra of dimension 2 over $\mathbf{k}$ is isomorphic to $\mathfrak r_2$. Indeed, if $x,y$ is a basis and $e=[x,y]=ax+by\neq0$, choose $c,d\in\mathbf{k}$ with $cb-da=1$ and put $h=cx+dy$. Then $h,e$ is a basis and $[h,e]=e$. Thus no assumption that $\mathbf{k}$ is algebraically closed is needed below.

\begin{theorem}\label{t11}
Let $\mathfrak r_2$ have basis $h,e$ and bracket $[h,e]=e$. Every affine Rota-Baxter operator on $\mathfrak a_{\mathrm{can}}(\mathfrak r_2)$ is equivalent to exactly one of the following operators:
$$
\begin{array}{c|c|c}
\text{operator} & P(uh+ve) & \text{parameter} \\ \hline
C_0 & 0 & {} \\
C_e & e & {} \\
C_\alpha & \alpha h & \alpha\in\mathbf{k}^\times \\
I & uh+ve & {} \\
A_t & ve+th & t\in\mathbf{k} \\
B & uh & {} \\
D & (u-v)h & {}
\end{array}
$$
Here $u,v\in\mathbf{k}$, and the first column names the maps in the second column. The operators $C_\alpha$ are constant maps, and $A_t$ is projection onto $\mathbf{k}e$ followed by translation by $th$. Different rows are inequivalent. For $\alpha,\beta\in\mathbf{k}^\times$, we have $C_\alpha\sim C_\beta$ if and only if $\alpha=\beta$. For $t,t'\in\mathbf{k}$, we have $A_t\sim A_{t'}$ if and only if $t=t'$.
\end{theorem}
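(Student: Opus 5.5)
The plan is to reduce everything to Theorem~\ref{t9} and Proposition~\ref{p14}, and then run an orbit computation by the rank of the linear part. By Theorem~\ref{t9}, every affine Rota-Baxter operator is $P(x_++x_-)=x_++p$. Here $\mathfrak r_2=\mathfrak g_+\oplus\mathfrak g_-$ is a decomposition into Lie subalgebras, and $p\in\mathfrak g_-$ satisfies $[p,\mathfrak g_+]\subseteq\mathfrak g_+$. Since $Z(\mathfrak r_2)=0$, Proposition~\ref{p14} says that equivalence is simultaneous action of $\operatorname{Aut}_{\rm Lie}(\mathfrak r_2)$ on the ordered pair $(\mathfrak g_+,\mathfrak g_-)$ and on $p$, with $q=0$ in Eq.~(\ref{e68}). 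I will first determine this group. Since $[\mathfrak r_2,\mathfrak r_2]=\mathbf{k}e$, any automorphism $\psi$ satisfies $\psi(e)=ae$ with $a\neq0$. Writing $\psi(h)=ch+be$, the relation $[\psi h,\psi e]=\psi e$ forces $c=1$. So $\operatorname{Aut}_{\rm Lie}(\mathfrak r_2)=\{\psi_{a,b}:h\mapsto h+be,\ e\mapsto ae\}$. In particular, each automorphism fixes $\mathbf{k}e$ and preserves the $h$-coefficient of every vector.

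Next I split by $r=\operatorname{rank}R$.

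For $r=2$, we have $\mathfrak g_-=0$, so $p=0$ and $P=I$.

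For $r=0$, we have $\mathfrak g_+=0$, so $P$ is the constant map at an arbitrary $p$. If $p=\alpha h+\beta e$ with $\alpha\neq0$, then $\psi_{1,b}$ with $b=-\beta/\alpha$ sends $p$ to $\alpha h$. Because the $h$-coefficient is invariant, the value of $\alpha$ is an invariant. If $\alpha=0$ and $\beta\neq0$, then $\psi_{1/\beta,0}$ sends $p$ to $e$. This yields exactly $C_0$, $C_e$ and $C_\alpha$, and they are mutually inequivalent.

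For $r=1$, every line is a subalgebra, so $\mathfrak g_\pm$ are two distinct lines. I split into two subcases according to whether $\mathfrak g_+=\mathbf{k}e$; this property is invariant because $\mathbf{k}e$ is characteristic.
\begin{enumerate}[label={\rm(\alph*)}]
\item If $\mathfrak g_+=\mathbf{k}e$, the condition $[p,\mathfrak g_+]\subseteq\mathbf{k}e$ is automatic. We have $\mathfrak g_-=\mathbf{k}(h+ce)$, and $\psi_{1,-c}$ moves it to $\mathbf{k}h$. Then $p=th$, giving $A_t$. To separate the $A_t$, note that any automorphism carrying $A_t$ to $A_{t'}$ must preserve $\ker R=\mathbf{k}h$. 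This forces $b=0$, so $\psi$ fixes $h$ and therefore $t'=t$.
\item If $\mathfrak g_+\neq\mathbf{k}e$, then $\mathfrak g_+=\mathbf{k}(h+ce)$, and we normalize it to $\mathbf{k}h$. Write $p=xh+ye\in\mathfrak g_-$. The condition $[p,h]=-ye\in\mathbf{k}h$ forces $y=0$. Since $\mathfrak g_-\neq\mathbf{k}h$, this gives $p=0$. The remaining freedom is $\psi_{a,0}$, which fixes $h$. If $\mathfrak g_-=\mathbf{k}e$, we get $B$. Otherwise $\mathfrak g_-=\mathbf{k}(h+ce)$ with $c\neq0$, and $\psi_{1/c,0}$ carries it to $\mathbf{k}(h+e)$. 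That operator is the projection onto $\mathbf{k}h$ along $\mathbf{k}(h+e)$, namely $uh+ve\mapsto(u-v)h$, which is $D$.
\end{enumerate}

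Finally, I verify that the rows are pairwise inequivalent, using invariants that equivalence preserves:
\begin{itemize}
\item the rank of $R$;
\item for rank $0$, the class of $p$ described above;
\item for rank $1$, whether $\operatorname{Im}R=\mathbf{k}e$;
\item in the last subcase, whether $\ker R=\mathbf{k}e$.
\end{itemize}
These invariants distinguish $B$ from $D$, and both from every $A_t$. The step I expect to need the most care is the stabilizer argument. For the families $C_\alpha$ and $A_t$, I must check that no residual automorphism changes the parameter. That rests on two facts: the $h$-coefficient is invariant, and the stabilizer of $\mathbf{k}h$ in $\operatorname{Aut}_{\rm Lie}(\mathfrak r_2)$ fixes $h$ itself.
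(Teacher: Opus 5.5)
Your proposal is correct and follows essentially the same route as the paper. Both arguments determine $\operatorname{Aut}_{\rm Lie}(\mathfrak r_2)$ and use $Z(\mathfrak r_2)=0$ to reduce equivalence to $R'=\psi R\psi^{-1}$, $p'=\psi(p)$, then split by rank, separating parameters via the invariance of $\mathbf{k}e$ and of the $h$-coefficient and via the fact that the stabilizer of $\mathbf{k}h$ fixes $h$. The only cosmetic differences are that you organize rank one by whether $\operatorname{Im}R=\mathbf{k}e$ rather than first listing the three orbits of ordered line pairs, and that you should say explicitly, as the paper does, that each listed representative satisfies the hypotheses of Theorem~\ref{t9}, so every row is actually realized.
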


\begin{proof}
The derived ideal is $[\mathfrak r_2,\mathfrak r_2]=\mathbf{k}e$. An automorphism must therefore send $e$ to $de$ with $d\neq0$. Writing its value at $h$ as $ah+ce$ and preserving $[h,e]=e$ gives $ad=d$, hence $a=1$. Conversely, all these maps preserve the bracket. Thus
\begin{equation}\label{e70}
\operatorname{Aut}_{\rm Lie}(\mathfrak r_2)
=\{\psi_{c,d}:\psi_{c,d}(h)=h+ce,\
\psi_{c,d}(e)=de\mid c\in\mathbf{k},\ d\in\mathbf{k}^{\times}\}.
\end{equation}
If $z=ah+be$ is central, then $[z,e]=ae=0$ and $[z,h]=-be=0$, so $z=0$. By Proposition~\ref{p14}, equivalence is therefore given by
$
R'=\psi_{c,d}R\psi_{c,d}^{-1}, p'=\psi_{c,d}(p).
$

Write $P=R+p$ as in Theorem~\ref{t9}. Since $R$ is a projection, its rank is $0$, $1$ or $2$. At rank zero, $P$ is the constant map with value $p=\alpha h+\beta e$. Using Eq.~(\ref{e70}), we have
$$
\psi_{c,d}(\alpha h+\beta e)
=\alpha(h+ce)+\beta de
=\alpha h+(\alpha c+\beta d)e.
$$ If $p=0$, the result is $C_0$. If $\alpha=0$ and $\beta\neq0$, the choice $c=0$, $d=\beta^{-1}$ gives $C_e$. If $\alpha\neq0$, the choice $d=1$, $c=-\beta/\alpha$ gives $C_\alpha$. The coefficient $\alpha$ is invariant under every automorphism. These classes are therefore distinct, including distinct nonzero values of $\alpha$. At rank two, $R=\operatorname{id}$ and $p\in\ker R=0$, giving $I$.

At rank one, $(\mathfrak g_+,\mathfrak g_-)=(\operatorname{Im}R,\ker R)$ is an ordered pair of distinct lines. Every line is a Lie subalgebra. Besides $\mathbf{k}e$, every line is $\mathbf{k}(h+ae)$ for some $a\in\mathbf{k}$, and $\psi_{c,d}\bigl(\mathbf{k}(h+ae)\bigr)=\mathbf{k}(h+(c+da)e)$. If one line is $\mathbf{k}e$, the other is sent to $\mathbf{k}h$ by choosing $d=1$ and $c=-a$. If both lines are $\mathbf{k}(h+ae),\mathbf{k}(h+be)$ with $a\neq b$, take $d=(b-a)^{-1}$ and $c=-a(b-a)^{-1}$. The ordered pair is then sent to $(\mathbf{k}h,\mathbf{k}(h+e))$. Consequently there are precisely three orbits of ordered pairs:
$
(\mathbf{k}e,\mathbf{k}h),(\mathbf{k}h,\mathbf{k}e),(\mathbf{k}h,\mathbf{k}(h+e)).
$
They are distinct because every automorphism preserves $\mathbf{k}e$ and the order of the image and kernel.

For $(\mathbf{k}e,\mathbf{k}h)$, the allowable translation vectors are $p=th$ with arbitrary $t\in\mathbf{k}$, since $[th,e]=te$. This gives $A_t$. An automorphism preserving this ordered pair has $c=0$ and arbitrary $d\neq0$, so it fixes $h$ and cannot change $t$.

For $(\mathbf{k}h,\mathbf{k}e)$, write $p=te$. Then $[p,h]=-te$ belongs to $\mathbf{k}h$ only if $t=0$. The projection is $R(uh+ve)=uh$, giving $B$. For $(\mathbf{k}h,\mathbf{k}(h+e))$, write $p=t(h+e)$. Again $[p,h]=-te\in \mathbf{k}h$ forces $t=0$. The decomposition
$
uh+ve=(u-v)h+v(h+e)
$
gives the projection $R(uh+ve)=(u-v)h$, hence $D$.

Conversely, the constant maps in the table are affine Rota-Baxter operators by Proposition~\ref{p7}, since every point of the canonical Lie affgebra is idempotent. The identity map also satisfies Eq.~(\ref{e9}). For the three rows of rank one, the displayed complementary lines are Lie subalgebras, $p$ lies in the kernel, and the calculations above verify Eq.~(\ref{e65}). Theorem~\ref{t9} therefore proves existence for every listed parameter. Rank separates the cases of ranks zero, one and two. Within rank one, the three orbits of ordered pairs are distinct and the stabilizer calculation separates the parameters of $A_t$. Thus the table is complete and its representatives are pairwise inequivalent.
\end{proof}

Over $\mathbb R$, the ordinary Rota-Baxter operators of weight $1$ on the nonabelian Lie algebra of dimension 2 are described in~\cite[Example~4.3]{JSZ2025}, using a rescaled basis. Negating these operators gives the ordinary operators of weight $-1$. On the canonical Lie affgebra, the affine Rota-Baxter identity additionally requires $R^2=R$, $R(p)=0$ and $R\operatorname{ad}_p=\operatorname{ad}_pR$, where $P(x)=R(x)+p$. Theorem~\ref{t11} classifies the resulting affine operators, including their translation terms, up to Lie affgebra automorphism.

\medskip
\noindent\textbf{Acknowledgments.} This work is supported by the National Natural Science Foundation of China (12671029 and 12101316). A. Khudoyberdiyev thanks the Sino-Russian Mathematics Center at Peking University, Beijing, China, for its hospitality during his stay, when part of this work was carried out. Y. Zhang and Y. Xie acknowledge support from the visiting scholar program of the Chern Institute of Mathematics and thank Professor Chengming Bai for his hospitality. The authors thank Professor Li Guo for helpful discussions.

\begingroup\emergencystretch=1em
\noindent\textbf{Declaration of generative AI use.} During the preparation of this work, the authors used \mbox{ChatGPT (OpenAI)} to assist in checking the completeness of the classification presented in Section~\ref{u18}. The results obtained with its assistance agreed with the authors' calculations. The authors reviewed and edited the output as needed and take full responsibility for the content of the published article.\par\endgroup

\noindent\textbf{Declaration of interests.} The authors have no conflicts of interest to disclose.

\noindent\textbf{Data availability.} Data sharing is not applicable as no data were created or analyzed.

\end{document}